\numberwithin{equation}{section}
\newcounter{todocounter}
\newcommand{\overarrowsplus}{\raisebox{-.6ex}{\tikz[-{To[]},scale=.3]{	\draw (0,1) -- (1,0);\draw[white,-,line width= 3pt] (1,1) -- (0,0);	\draw (1,1) -- (0,0);}}}
\newcommand{\overarrowsminus}{\raisebox{-.6ex}{\tikz[-{To[]},scale=.3]{	\draw (1,1) -- (0,0);\draw[white,-,line width= 3pt] (0,1) -- (1,0);	\draw (0,1) -- (1,0);}}}
\newcommand{\downcurvearrowleft}{\raisebox{1ex}{\scalebox{-1}{$\curvearrowright$}}}
\newcommand{\downcurvearrowright}{\raisebox{1ex}{\scalebox{-1}{$\curvearrowleft$}}}
\tikzset{%
	leftCrossing/.pic={code={%
			\tikzset{->,very thick}
			\draw (0,1) to[out=-90,in=90]+(1,-1);
			\draw[-,white,line width=4pt,postaction={draw,black,thick,->}] (1,1) to[out=-90,in=90]+(-1,-1);}},
	rightCrossing/.pic={code={%
			\tikzset{->,very thick}
			\draw (1,1) to[out=-90,in=90]+(-1,-1);
			\draw[-,white,line width=4pt,postaction={draw,black,thick,->}] (0,1) to[out=-90,in=90]+(1,-1);}}
}
\newcommand{\be}{\begin{equation}}
\newcommand{\ee}{\end{equation}}
\newcommand{\CF}{{\mathcal F}}
\newcommand{\CV}{{\mathcal V}}
\newcommand{\cat}{\mathcal{C}}
\newcommand{\Rib}{\mathcal{R}}
\newcommand{\C}{\ensuremath{\mathbb{C}} }
\newcommand{\Z}{\ensuremath{\mathbb{Z}} }
\newcommand{\N}{\ensuremath{\mathbb{N}} }
\newcommand{\W}{\ensuremath{\mathbb{W}} }
\newcommand{\E}{\ensuremath{\mathbb{E}} }
\newcommand{\rtu}{\ensuremath{{\zeta_{2r}}}}
\newcommand{\Fq}{{\mathbb K_r}} 
\newcommand{\Uqg}{\ensuremath{\mathcal U}}
\newcommand{\UqgH}{\ensuremath{\Uqg^{H}}}
\newcommand{\Ubar}{\ensuremath{\overline{\Uqg}^{H}_\rtu}}
\newcommand{\unit}{\ensuremath{{\mathrm{1}\mkern-4mu{\mathchoice{}{}{\mskip-0.5mu}{\mskip-1mu}}\mathrm{l}}}}
\newcommand{\End}{\operatorname{End}}
\newcommand{\Hom}{\operatorname{Hom}}
\newcommand{\Id}{\operatorname{Id}}
\newcommand{\coev}{\stackrel{\longrightarrow}{\operatorname{coev}}}
\newcommand{\ev}{\stackrel{\longrightarrow}{\operatorname{ev}}}
\newcommand{\tev}{\stackrel{\longleftarrow}{\operatorname{ev}}}
\newcommand{\tcoev}{\stackrel{\longleftarrow}{\operatorname{coev}}}
\newcommand{\mathsmall}[1]{\mbox{\small$#1$}}
\newcommand{\qdim}{\operatorname{qdim}} 
\newcommand{\md}{\operatorname{\mathsf{d}}}
\newcommand{\pic}[2]{
	\setlength{\unitlength}{#1}
	{\begin{array}{c} \hspace{-1.3mm}
			\raisebox{-4pt}{#2}
			\hspace{-1.9mm}\end{array}}}
\newcommand{\drawTr}{
	\qbezier(3, 3)(3, 0)(6, 0)
	\qbezier(6, 0)(9, 0)(9, 4)
	\put(9,4){\vector(0,1)3}
	\multiput(0,3)(6,0){2}{\line(0,1){5}}
	\multiput(0,3)(0,5){2}{\line(1,0){6}}
	\qbezier(3, 8)(3, 11)(6, 11)
	\qbezier(6, 11)(9, 11)(9, 7)
}
\newcommand{\drawQDim}{
	\qbezier(0, 3)(0, 0)(3, 0)
	\qbezier(3, 0)(6, 0)(6, 3)
	\put(6,3.5){\vector(0,1)1}
	\qbezier(0, 3)(0, 7)(3, 7)
	\qbezier(3, 7)(6, 7)(6, 4)
}
\newtheorem{counter}{Counter}[section]
\newtheorem{thm:ADO-q-hol}[counter]{Theorem}
\newtheorem{def:q-hol}[counter]{Defintion}
\newtheorem{def:q-proper-hypergeo}[counter]{Definition}
\newtheorem{thm:q-hol-closure}[counter]{Theorem}
\newtheorem{cor:tensor-contraction}[counter]{Corollary}
\newtheorem{theorem}[counter]{Theorem}
\newtheorem{definition}[counter]{Definition}
\newtheorem{remark}[counter]{Remark}
\newtheorem{prop}[counter]{Proposition}
\newtheorem{lemma}[counter]{Lemma}
\newtheorem{cor}[counter]{Corollary}
\newtheorem{lemma:W1-single-recursion}[counter]{Lemma}
\newtheorem{lemma:generators}[counter]{Lemma}
\begin{document}
	\title{The ADO Invariants are a q-Holonomic Family}
	
	\author{\hspace{-.4in}Jennifer Brown$^a$\,, Tudor Dimofte$^a$\,, Stavros Garoufalidis$^{b,c}$\,, and Nathan Geer$^d$\\ \\
\hspace{-.4in}	 {\small $^a$Department of Mathematics and QMAP, UC Davis, 1 Shields Ave, Davis, CA 95616, USA}\\
\hspace{-.4in}	 {\small $^b$Max Planck Institute for Mathematics, Vivatsgasse 7, 53111 Bonn, Germany} \\
\hspace{-.4in}	 {\small $^c$International Center for Mathematics, Dept. of Math., Southern Univ. of Science and Technology, Shenzhen, China} \\
\hspace{-.4in}	 {\small $^d$Department of Mathematics \& Statistics, Utah State University, Logan, Utah 84322, USA} \\ 
\hspace{-.4in}	 {\small \texttt{brown@math.ucdavis.edu}\quad \texttt{tudor@math.ucdavis.edu} \quad \texttt{stavros@mpim-bonn.mpg.de}\quad \texttt{nathan.geer@gmail.com}}
	  }

	\maketitle
	
	\begin{abstract}
		We investigate the $q$-holonomic properties of a class of link invariants based on quantum group representations with vanishing quantum dimensions, motivated by the search for the invariants' realization in physics. Some of the best known invariants of this type, constructed from `typical' representations of the unrolled quantum group $\mathcal U^H_{\zeta_{2r}}(\mathfrak{sl}_2)$ at a $2r$-th root of unity, were introduced by Akutsu-Deguchi-Ohtsuki (ADO). We prove that the ADO invariants for $r\geq 2$ are a $q$-holonomic \emph{family}, implying in particular that they satisfy recursion relations that are independent of $r$. In the case of a knot, we prove that the $q$-holonomic recursion ideal of the ADO invariants is contained in the recursion ideal of the colored Jones polynomials, the subject of the celebrated AJ Conjecture. (Combined with a recent result of S. Willetts, this establishes an isomorphism of the ADO and Jones recursion ideals. Our results also confirm a recent physically-motivated conjecture of Gukov-Hsin-Nakajima-Park-Pei-Sopenko.)
			\end{abstract}
	
	\tableofcontents

\section{Introduction}

A new class of quantum invariants of links and three-manifolds was introduced in \cite{akutsu1992invariants, murakami2008colored, geer2009modified, costantino2015quantum}, based on representation categories of quantum groups that may be non-semisimple and/or have vanishing quantum dimensions. These invariants generalize ``classic'' quantum invariants of knots and three-manifolds, such as the colored Jones and WRT invariants \cite{Turaev, Witten-Jones, RT}, which are instead constructed from semisimple representation categories where quantum dimensions are all nonzero. This paper arose from studying various properties of the new class of invariants, theoretically and via computations, with the goal of comparing their behavior to that of classic invariants.

A large part of our motivation came from physics. The colored Jones polynomials, HOMFLY polynomials, WRT invariants, etc. all have a physical origin in Chern-Simons theory with compact gauge group \cite{Witten-Jones}, which has led to many deep and unexpected insights over the past three decades.  An analogous physical origin for the new class of invariants --- a 3d continuum quantum field theory whose partition functions compute the new invariants --- has yet to be established. By investigating properties of the new invariants, one might hope to gain clues in identifying the missing 3d QFT's.

The property that we focus on in this paper concerns recursion relations. It was shown by Garoufalidis and L\^e in \cite{GL-J} that the sequence of colored Jones polynomials $\big(J_N^K(q)\big)_{N\in \N}$ of a knot $K$ always obey a finite-order recursion relation. More precisely, the function $J^K:\N\to \C[q,q^{-1}]$ generates a \emph{$q$-holonomic module} for the q-Weyl algebra
\be \mathbb E_1 = \C(q)[x^\pm,y^\pm]/(yx-qxy)\,, \ee
where $x$ and $y$ act on functions $f:\N\to\C(q)$ as multiplication by $q^N$ and shifting $N\mapsto N+1$, respectively. The theory of $q$-holonomic modules, central to the work of \cite{GL-J}, was developed by Sabbah \cite{Sabbah} and generalized classic work on D-modules by Bernstein, Sato, Kashiwara, and others. 

It was also conjectured in \cite{Gar-AJ} (and since confirmed in many examples \emph{e.g.} \cite{GaroufalidisKoutschan, GS-twist}) that the $q\to 1$ limit of any element $A(x,y;q)\in \E_1$ that annihilates the colored Jones function $J^K$ is divisible by the classical A-polynomial of $K$. Since the A-polynomial is defined using the classical $SL(2,\C)$ representation variety of the knot complement $S^3\backslash K$ \cite{Apoly}, this ``AJ conjecture'' established a new connection between colored Jones invariants and classical geometry. It remains an open conjecture.

The fact that the colored Jones polynomials should be annihilated by a recursion operator related to the A-polynomial was independently predicted by Gukov \cite{Gukov}, based on the physics of Chern-Simons theory. The approach of \cite{Gukov} was to analytically continue Chern-Simons theory with compact gauge group $SU(2)$ to a complex group $SL(2,\C)$; then an operator $A(x,y;q)$ providing recursion relations for the colored Jones was identified with an effective Hamiltonian that must annihilate the analytically continued Chern-Simons wavefunction. This operator had to be a quantization of the classical A-polynomial, which was the classical Hamiltonian of the system. (This insight was subsequently used in \cite{Gukov} to generalize the Volume Conjecture of \cite{Kashaev}.)

From a physical perspective, the presence of an operator $A(x,y;q)$ that quantizes the classical A-polynomial and annihilates quantum wavefunctions is now known to be an extremely robust feature of Chern-Simons theory with gauge group $SU(2)$ and many other versions of Chern-Simons theory with gauge group $SL(2,\C)$, including its analytic continuation (\emph{cf.} \cite{DGLZ, Dimofte-k, GukovManolescu}). In searching for a physical home for the new class of quantum invariants of \cite{akutsu1992invariants, geer2009modified, costantino2015quantum} it is therefore natural to ask whether they too satisfy recursion relations related to A-polynomials.

The invariants considered in this paper are defined using the representation category of the \emph{unrolled quantum group} $\UqgH_\rtu(\mathfrak{sl}_2)$ at the $2r$-th root of unity $\zeta_{2r}:=e^{\frac{i\pi}{r}}$, $r\in \N_{\geq 2}$. (See Section~\ref{sec:background} for details.) This quantum group admits a continuous family of `typical' representations $\{V_\alpha\}_{\alpha\in (\C\backslash \Z)\cup(-1+r\Z)}$ that are irreducible \emph{but} have vanishing quantum dimensions.

 Let $L$ be a framed, oriented link in $S^3$, with $n$ components colored by typical representations $V_{\alpha_1},...,V_{\alpha_n}$. It was shown in \cite{akutsu1992invariants,geer2009modified} how to overcome the problem of vanishing quantum dimensions to define a non-vanishing link invariant $N_L^r(\alpha_1,...,\alpha_n)$. 
 After restricting to $\alpha_i\in \C\backslash \Z$, it is useful to view these invariants as a family of functions
\be N_L^r : (\C\backslash\Z)^n \to \C\,,\qquad (r\in \N_{\geq 2})\,. \ee
They are in fact holomorphic and admit meromorphic continuations to $(\C/2r\Z)^n$.
Though this particular family of invariants can be defined using the systematic methods of \cite{geer2009modified}, they actually appeared much earlier in work of Akutsu, Deguchi, and Ohtsuki \cite{akutsu1992invariants}. Thus we call $N_L^r(\alpha)$ the ADO invariants.

We prove that the ADO invariants $N_L^r$ of any framed, oriented link $L$ are indeed $q$-holonomic. Moreover, in the case of a knot $L=K$, we prove that all recursion relations satisfied by the ADO invariants are also satisfied by the colored Jones function $J^K$. \medskip

While this paper was in final preparation, a physical interpretation of the ADO invariant appeared in work  \cite{Gukov:2020lqm} of Gukov, Hsin, Nakajima, Park, Pei, \& Sopenko. Therein, ADO invariants are related physically to a number of other invariants, including the recent homological blocks of \cite{GPV, GPPV, GukovManolescu}. It is conjectured in \cite[Sec 4]{Gukov:2020lqm} that ADO invariants obey the same recursion operations as Jones polynomials. Our results prove that this is indeed the case.

\subsection{Roots of unity, $q$-holonomic families, and Hamiltonian reduction}

It is not obvious what should be meant when considering whether the ADO invariants are $q$-holonomic. At each fixed $r$, the ADO invariant $N_L^r$ of an $n$-component link $L$ turns out to be quasi-periodic in each variable $\alpha_i$, with period $2r$. 
(We review this property in Proposition \ref{prop:struc} and Corollary \ref{cor:struc}.) The upshot is that the ADO invariant $N_L^r(\alpha_1,...,\alpha_n)$ at fixed $r$ will \emph{trivially} satisfy $n$ independent recursion relations, of the form
\be \label{triv-intro} \Big(\prod_{j=1}^n x_j^{-2rC_{ij}}y_i^{2r}-1\Big)N_L^r(\alpha)=0\,,\qquad i=1,...,n \ee
where each $x_i$ acts as multiplication by $\zeta_{2r}^{\alpha_i}:= e^{\frac{i\pi}r\alpha_i}$ and each $y_i$ acts as a shift $\alpha_i\mapsto \alpha_i+1$, and $C_{ij}$ is the integer linking matrix of $L$. These recursion relations, which depend only on the linking matrix, do not have a deep connection with the A-polynomial. 
 
To obtain interesting recursion relations, we work independently of the choice of $r$. This leads us to introduce the notion of a $q$-holonomic family. Let
\be \label{En-intro} \E_n = \C(q)[x_1^\pm,y_1^\pm,...,x_n^\pm,y_n^\pm]/(y_ix_j-q^{\delta_{ij}}x_jy_i) \ee
be a $q$-Weyl algebra in $n$ pairs of variables. Given an $n$-component link $L$ with ADO invariants $\{N_L^r(\alpha)\}_{r\geq 2}$, define an analog of the annihilation ideal $\mathcal I[N_L]\subseteq \E_n$ by
\be  \label{INL-intro} \mathcal I[N_L] = \{ A(x,y;q)\in \E_n\,|\, A(x,y;\zeta_{2r})N_L^r(\alpha) = 0\,\text{for all but finitely many $r\geq 2$}\}\,, \ee
with the usual action $x_i N_L^r(\alpha) = \zeta_{2r}^{\alpha_i}N_L^r(\alpha)$ and $y_i N_L^r(\alpha) = N_L^r(\alpha_1,...,\alpha_i+1,...,\alpha_n)$.
(Note that the specialization of elements of $\E_n$ to $q=\zeta_{2r}$ may not be defined at some finite number of $r$'s, which we ignore on the RHS of \eqref{INL-intro}.) We prove

\bigskip
\noindent\textbf{Theorem \ref{thm:ADO}} \emph{For any framed, oriented link $L$, the left $\E_n$-module $\E_n/\mathcal I[N_L]$ is $q$-holonomic.}
\bigskip

In particular, this implies that each ADO function $N_L^r(\alpha)$ satisfies $n$ independent recursion relations, which come from operators $A(x,y;q)\in \E_n$ that are \emph{independent} of $r$.

Our method of proof is to first show that the ADO invariants $N_L^r(\alpha)$ may be lifted (or analytically continued) to functions $G_{\mathbb D}(r;x_1,...,x_n,z_{11},z_{12},...,z_{nn};q)$ of $1+n+\frac12 n(n+1)$ variables $r,x_i,z_{ij}=z_{ji}$, as well as $q$, in such a way that
\be \label{spec-intro} N_L^r(\alpha) =  G_{\mathbb D}\big(r;\zeta_{2r}^{\alpha_1},...,\zeta_{2r}^{\alpha_n},\zeta_{2r}^{\alpha_1^2/2},\zeta_{2r}^{\alpha_1\alpha_2/2},...,\zeta_{2r}^{\alpha_n^2/2}; \zeta_{2r}\big)\,.  \ee
The lift from $N_L^r$ to $G_{\mathbb D}$ is not canonical, and $G_{\mathbb D}$ is \emph{not} a link invariant. It is defined in Section~\ref{sec:GD} using a choice of diagram $\mathbb D$ for a $(1,1)$-tangle whose closure is $L$. 

The virtue of $G_{\mathbb D}$ is that it is relatively straightforward to prove it generates a $q$-holonomic module for the $q$-Weyl algebra $\E_{n+1}$, in the same $n$ pairs of generators $x_i,y_i$ as \eqref{En-intro} together with a final pair $\hat x,\hat y$ that act as multiplication by $q^r$ and shift $r\mapsto r+1$. The proof that $G_{\mathbb D}$ is $q$-holonomic (contained in Section \ref{sec:preADO}) is a simple generalization of the original work of \cite{GL-J}. 

We then argue in Section \ref{sec:spec} that the specialization \eqref{spec-intro}, which in particular sets $q$ to be a $2r$-th root of unity, may be analyzed using a version of \emph{quantum Hamiltonian reduction}. The Hamiltonian reduction reduces $\E_{n+1}$ to $\E_n$ by eliminating the shift $\hat y$ in $r$ and setting $\hat x=\zeta_{2r}^r = -1$. It takes the annihilation ideal of $G_{\mathbb D}$ in $\E_{n+1}$ and explicitly constructs elements of our desired ideal $\mathcal I[N_L]$. We give a self-contained proof that the relevant Hamiltonian reduction preserves $q$-holonomic modules in Appendix \ref{app:red}.

Our result in Theorem \ref{thm:ADO} that the family of ADO invariants is $q$-holonomic would not be interesting if the elements $A(x,y;q)$ of $\mathcal I[N_L]$ were as trivial as the recursion relations in \eqref{triv-intro}. We prove in Section \ref{sec:Jones} that this is not the case, since the ideal $\mathcal I[N_L]$ is included in the annihilation ideal of the colored Jones function up to rescaling of variables.

More concretely, suppose that $L=K$ is an oriented knot with framing $f$, and $J_N^K(q)$ are its colored Jones polynomials, normalized so that $J_N^{unknot}(q) = (q^N-q^{-N})/(q-q^{-1})$. 

\bigskip
\noindent\textbf{Theorem \ref{thm:Jones}} \emph{For every element $A(x,y;q)\in \mathcal I[N_K]$, we have $A(q^{-1}x,(-1)^{f+1}y;q)J_N^K = 0$. 
}
\bigskip

\noindent This result follows fairly quickly from a relation between ADO invariants and colored Jones polynomials discussed in \cite{CGP-relations}. The relation is representation-theoretic in origin: as the parameter $\alpha$ of a typical module $V_\alpha$ for the unrolled quantum group approaches an integer $N-1\in \Z\backslash r\Z$, the module becomes reducible and its simple quotient coincides with the module used in defining the Jones polynomial.

Recently it has been shown in \cite{Willetts, Beliakova} that \emph{both} ADO and colored Jones invariants of links may be obtain by specializations of more universal invariants valued in the Habiro ring \cite{Habiro-cyc, Habiro-sl2}. One might expect that such relations lead to an independent proof that the family of ADO invariants is $q$-holonomic, with recursion relations equivalent to those satisfied by the colored Jones. Indeed, Sonny Willetts proves in Theorem 66 of the upcoming revised version of \cite{Willetts} that every element in the annihilation ideal of the colored Jones of a knot will also annihilate the family of ADO invariants. This is a converse to our Theorem \ref{thm:Jones}. Taken together, the two results establish that the annihilation ideals of the colored Jones and ADO invariants are equivalent.

\subsection{Example: figure-eight knot}

The Jones polynomials $\big(J^{\mathbf{4_1}}_N(q)\big)_{N\in \N}$ of the zero-framed figure-eight knot,
\be \begin{array}{l} J^{\mathbf{4_1}}_1(q) = 1\,,\qquad J^{\mathbf{4_1}}_2(q) = q^5+q^{-5}\,, \\[.1cm]
 J^{\mathbf{4_1}}_3(q)    = q^{14}-q^{10}+q^2+1+q^{-2}-q^{-10}+q^{-14} \\[.1cm]
 J^{\mathbf{4_1}}_4(q)  = q^{27}-q^{23}-q^{21}+q^{17}+q^{11}+q^9+q^{-9}+q^{-11}+q^{-17}-q^{-21}-q^{-23}+q^{27}\,, \\
 \ldots\,
 \end{array}
\ee
normalized so that $J^{unknot}_N(q)= \frac{q^N-q^{-N}}{q-q^{-1}}$, 
satisfy the 2nd-order inhomogeneous recursion
\be \label{41-inhom} (q-q^{-1})A(x,y;q)  J^{\mathbf{4_1}}_N(q) =B(q^N;q)\,,  \ee
where%
\footnote{This differs slightly from the recursion relation found in \cite{GL-J}, only because of the normalization of the colored Jones polynomials we are using here. The recursions are completely equivalent.}
\be \label{41-AB} \begin{array}{l} A(x,y;q) = (\tfrac{x^2}{q}-\tfrac{q}{x^2})y-(x^2-\tfrac{1}{x^2})(x^4-x^2-(q^2+q^{-2})-x^{-2}+x^{-4})+(qx^2-\tfrac{q}{qx^2})y^{-1} \\[.2cm]
 B(x;q) = (x+\tfrac1 x)(qx^2-\tfrac{1}{qx^2})(\tfrac{x^2}{q}-\tfrac{q}{x^2})\,, \end{array} \ee
and $x$ and $y$ act as multiplication by $q^N$ and shift $N\mapsto N+1$, respectively. The inhomogeneous recursion above implies the existence of a homogeneous recursion of one order higher,
\be \label{41-hom}  \big[B(x;q)y - B(qx;q)\big]A(x,y;q) J_N^{\mathbf{4_1}}(q) = 0\,. \ee
The operator $\widetilde A(x,y;q):= \big[B(x;q)y - B(qx;q)\big]A(x,y;q)$ generates the annihilation ideal of the colored Jones. At $q=1$, it is easy to see that $\widetilde A(m,\ell;q=1) = (m+m^{-1})(m^2-m^{-2})^3(\ell-1) \big(\ell-(m^4-m^2-2-m^{-2}+m^{-4})+\ell^{-1}\big)$ is divisible by the A-polynomial of the figure-eight knot, namely $(\ell-1) (\ell-(m^4-m^2-2-m^{-2}+m^{-4})+\ell^{-1})$.

A compact formula for the ADO invariants of the zero-framed figure-eight knot was given in~\cite{Murakami-cone}; adjusted for our conventions in this paper, it reads
\be
N_{\mathbf{4_1}}^r(\alpha-1) = \frac{-i^{1-r}}{x^r-x^{-r}} \sum_{k=0}^{r-1}x^{2k+1}(q^{-2k}x^{-2};q^2)_{2k+1}\Big|\raisebox{-.3cm}{$x=\zeta_{2r}^\alpha,q=\zeta_{2r}$}
\ee
Letting $\hat N_{\mathbf{4_1}}^r(\alpha):= i^{1-r}(x^r-x^{-r})N_{\mathbf{4_1}}^r(\alpha-1)$, the first few ADO invariants are
\be \begin{array}{l@{\quad}l}
\hat N_{\mathbf{4_1}}^2(\alpha) = (x+x^{-1})(x^2+3+x^{-2})& (x=e^{\frac{i\pi}{2}\alpha}) \\[.1cm]
\hat N_{\mathbf{4_1}}^3(\alpha) = (x+x^{-1})(x^4+3x^2+5-3x^{-2}+x^{-4}) &  (x=e^{\frac{i\pi}{3}\alpha}) \\[.1cm]
\hat N_{\mathbf{4_1}}^4(\alpha) = (x-x^{-1})(x^2+1+x^{-1})^3  &  (x=e^{\frac{i\pi}{4}\alpha}) \\[.1cm]
\hat N_{\mathbf{4_1}}^5(\alpha) = (x-x^{-1})(x+x^{-1})^2\big[x^6+x^4 & \\[.1cm]
\hspace{.5in} +(3+q^2-q^3)(x^2+x^{-2})+(2-q^2+q^3)+x^{-4}+x^{-6}\big] &  (x=e^{\frac{i\pi}{5}\alpha},\, q=e^{\frac{i\pi}{5}})
\end{array}
\ee
Further values appear in Appendix \ref{app:comp}. We verify for each $2\leq r\leq 20$ that
\be A(x,y;\zeta_{2r})\hat N_{\mathbf{4_1}}^r(\alpha) = -(\zeta_{2r}^{2r\alpha}-3+\zeta_{2r}^{-2r\alpha})B(\zeta_{2r}^\alpha,\zeta_{2r})\,, \ee
for exactly the \emph{same} $A$ and $B$ as in \eqref{41-AB}, with $x$ and $y$ now acting as multiplication by $\zeta_{2r}^\alpha$ and shift $\alpha\mapsto\alpha+1$, respectively. These inhomogeneous recursions imply that for each $r$ the ADO invariant satisfies a homogeneous recursion
\be \label{41-AN} \widetilde A(x,y;\zeta_{2r}) \hat N_{\mathbf{4_1}}^r(\alpha) = 0  \qquad r\in \N_{\geq 2}\ee
for exactly the same $\widetilde A (x,y;q)= \big[B(x;q)y - B(qx;q)\big]A(x,y;q)$ that annihilated the colored Jones. Note that \eqref{41-AN} is equivalent to $\widetilde A(qx,-y;\zeta_{2r}) N_{\mathbf{4_1}}^r(\alpha) = 0$ in the `un-hatted' normalization, in agreement with Theorem \ref{thm:Jones}.

Further examples of inhomogeneous and homogeneous recursions for the trefoil and $\mathbf{5_2}$ knots are collected in Appendix \ref{app:comp}.

\subsection{Acknowledgements}

We would like to thank Sam Gunningham for many discussions and advice on formal aspects of $q$-holonomic modules.
We would also like to thank Sonny Willetts for sharing his unpublished results on recursions for the ADO invariant.
This work is supported by the NSF FRG Collaborative Research Grant DMS-1664387. Research of N. Geer was also partially supported by NSF grant  DMS-1452093.

\section{Background}
\label{sec:background}

\subsection{An extension of the Drinfel'd-Jimbo algebra}

	Here we consider the aforementioned unrolled quantum group.  This object was first fully established in \cite{geer2009modified}, though ideas of its formulation were already present in \cite{akutsu1992invariants,ohtsuki2002quantum}.  For more details about the unrolled quantum group and its representation theory see \cite{costantino2015some,geer2018trace}.

	Let $q$ be a formal variable.   Fix a positive integer $r\geq 2$, and let $\rtu=\exp\big(\frac{\pi\sqrt{-1}}{r}\big)$. Throughout this paper we use the notation $\rtu^x:=  e^{\frac{\pi\sqrt{-1}}{r}x}$ for any $x\in \C$.
	Let $\Fq$ be the subring of $\C(q)$ consisting of elements with no
	poles at $q=\rtu$.  A $\Fq$-module
	can be specialized at $q=\rtu$ by tensoring with the module $\Fq/(q-\rtu)$.
	
	Consider the $\Fq$-algebra $\Uqg_q=\Uqg_q(\mathfrak{sl}_2)$  generated by $E, F, K, K^{-1}$ with relations
	\begin{equation}\label{eq:Drinfled-Jimbo}
	KF = q^{-2}FK,\quad KE = q^{2}EK,\quad KK^{-1} = K^{-1}K = 1, \text{ and}\quad [E,F] = \dfrac{K-K^{-1}}{q-q^{-1}}.
	\end{equation}
	This is a Hopf algebra with co-product, co-unit, and antipode defined on generators by:
	\begin{equation}
	\begin{aligned}
	\triangle(E) &= 1\otimes E + E \otimes K, \qquad& \varepsilon(E) &= 0, &\qquad  S(E) &= -EK^{-1}, \\
	\triangle(F) &= K^{-1}\otimes F + F \otimes 1, & \varepsilon(F) &= 0, & S(F) &= -KF, \\
	\triangle(K) &= K\otimes K, & \varepsilon(K) &=1, & S(K) &= K^{-1}.
	\end{aligned}
	\end{equation}
	The Hopf algebra $\Uqg_q$ is usually called the Drinfeld-Jimbo quantum group.  
	
	The \emph{unrolled quantum group} $\UqgH_\rtu=\UqgH_\rtu(\mathfrak{sl}_2)$ is the $\C$-algebra generated by
	$E, F, K, K^{-1}, H$ with Relations
	\eqref{eq:Drinfled-Jimbo} specialized to $q=\rtu$, together with the relations
	\begin{equation}
	HK= KH, \qquad [H,E] = 2E,\qquad [H,F] = -2F.
	\end{equation}
	The algebra $\UqgH_\rtu$ is a Hopf algebra with coproduct, counit and antipode defined as above on $K^\pm,E,F$ and defined on the element $H$ as 
	\begin{equation}
	\triangle(H) = H\otimes 1 + 1\otimes H,\qquad \varepsilon(H) = 0, \qquad S(H) = -H.
	\end{equation}
To connect with the ADO invariant, we will further pass to the central quotient
\be  \Ubar=\Ubar(\mathfrak{sl}_2) := \UqgH_\rtu/(E^r,F^r)\,. \ee

	\subsubsection{Representations of \texorpdfstring{$\Ubar$}{U}}
	
	Let $V$ be a finite-dimensional $\Ubar$ module. An eigenvalue $\lambda \in \mathbb{C}$ of $H$ is called a \emph{weight} and the associated eigenspace is called the \emph{weight space}. We say $V$ is a \emph{weight module} if it splits as a direct sum of weight spaces and $q^H = K$ as operators on $V$, i.e. $Kv = \rtu^\lambda v$ for any weight vector $v$ with $Hv = \lambda v$.  Let $\cat$ denote the category of finite dimensional weight modules of $\Ubar$. 
	
	Consider the following two families of modules.  For $\alpha\in \C$, let $V_\alpha$ be the object in $\cat$  with a basis $\{v_0,\ldots,v_{r-1}\}$ on which the $\Ubar$-action is given by
	\begin{align}
	Ev_i &=
	\dfrac{\rtu^{\alpha - i +1}-\rtu^{-(\alpha - i +1)}}{\rtu-\rtu^{-1}}v_{i-1}  , &%
	Fv_i &= 
	\dfrac{\rtu^{i+1}-\rtu^{-(i+1)}}{\rtu-\rtu^{-1}}v_{i+1}, \\[\medskipamount]
	Hv_i &= (\alpha - 2i) v_i, & Kv_i &= \rtu^{\alpha - 2i}v_i \notag
	\end{align}
	 where $v_{-1} = v_{r} = 0$. When $\alpha \in (\C\setminus \Z) \cup (-1+r\Z) $ the module $V_\alpha$ is simple and called \emph{typical}.  As we will now discuss, when $\alpha \in \Z \setminus (-1+r\Z) $ the module $V_\alpha$ is not decomposable --- it has a simple submodule which is not a direct summand.

	For each $n \in \Z_{\geq 0}$, let $S_n^{q}$ be the usual
	$(n+1)$-dimensional irreducible highest weight $\Uqg_q$-module with
	highest weight $n$. 
	The module $S_{n}^{q}$ has a basis $\{s_0, s_1,...,s_n\}$ on which the $\Uqg_q$-action is given by $ Ks_i = q^{n - 2i}s_i$ and 
	\begin{align}\label{E:RelSnq}
	Es_i &=
	\dfrac{q^{n - i +1}-q^{-(n - i +1)}}{q-q^{-1}}s_{i-1}, &
	Fs_i &= 
	\dfrac{q^{i+1}-q^{-(i+1)}}{q-q^{-1}}s_{i+1} 
	\end{align}
	where $s_{-1}=s_{n+1}=0$.  
	If  $n \in \{0,\ldots,r-1\}$ then by setting $q=\rtu$ and $Hs_i = (n - 2i) s_i$, the $\Uqg_q$-module $S_{n}^{q}$ becomes a simple $\Ubar$-module $S_{n}$.  In general, if $m \in \{0,\ldots,r-1\}$ and $k\in \Z$ then we can define a simple $(m+1)$-dimensional $\Ubar$-module $S_{m+kr}$  with basis $\{s_0,...,s_{m}\}$ on which the $\Ubar$-action is given by
	\begin{align*}
	Hs_i &= (m+ kr- 2i) s_i,&  Ks_i &= q^{m+kr - 2i}s_i 
	\end{align*} and \eqref{E:RelSnq} with $q=\rtu$ and $n=m+rk$ (here we set $s_{-1}=s_{m+1}=0$).  
	Notice that the definitions of $V_{kr-1}$ and $S_{kr-1}$ coincide.
	
	\begin{lemma}
		Every simple module of $\cat$ is isomorphic to exactly one of the
		modules in the list:
		\begin{itemize}
			\item  $S_{n +kr}$, for $n=0,\cdots, r-2$ and $k\in \Z$,  
			\item  $V_\alpha$  for  $\alpha\in(\C\setminus \Z)\cup
			(-1+r\Z)$. 
		\end{itemize}
	\end{lemma}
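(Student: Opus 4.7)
The plan is to classify simple $V \in \cat$ by producing a highest-weight vector and tracking the $F$-chain it generates. Since $E^r = 0$ in $\Ubar$, the operator $E$ acts nilpotently on the finite-dimensional module $V$, so there is a nonzero $v_0 \in V$ with $E v_0 = 0$; let $\alpha \in \mathbb{C}$ be its $H$-weight and set $w_i := F^i v_0$ for $i \geq 0$. These vectors have pairwise distinct weights $\alpha - 2i$ (hence are linearly independent until one of them vanishes), and $w_r = 0$ since $F^r = 0$ in $\Ubar$.

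A standard induction using $[E,F] = (K-K^{-1})/(\rtu-\rtu^{-1})$ yields
\[ E w_i \;=\; [i]\,[\alpha - i + 1]\, w_{i-1}, \qquad [n]:=\frac{\rtu^n-\rtu^{-n}}{\rtu-\rtu^{-1}}, \]
so the span of the $w_i$ is $\Ubar$-stable and equals $V$ by simplicity. Let $m$ be the largest index with $w_m \neq 0$; then $w_{m+1} = 0$ and applying $E$ gives $[m+1]\,[\alpha - m]\, w_m = 0$, forcing $[m+1]\,[\alpha - m] = 0$ since $w_m \neq 0$. This splits into two cases: either $m+1 \in r\mathbb{Z}$, which forces $m = r-1$ and identifies $V$ with $V_\alpha$; or $m \leq r-2$ and $\alpha = m + kr$ for some $k \in \mathbb{Z}$, identifying $V$ with $S_{m+kr}$.

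The main remaining step, and the most delicate one, is to check simplicity of the resulting candidate modules (which cuts down the allowed range of $\alpha$) together with uniqueness. For $V_\alpha$, simplicity is equivalent to the absence of any other $E$-annihilated vector among $w_1,\dots,w_{r-1}$, i.e.\ $[\alpha - i + 1] \neq 0$ for $i = 1,\dots,r-1$; unwinding this using $[n]=0 \iff n \in r\mathbb{Z}$ yields exactly $\alpha \not\in \mathbb{Z} \setminus (-1 + r\mathbb{Z})$, which is precisely $\alpha \in (\mathbb{C} \setminus \mathbb{Z}) \cup (-1 + r\mathbb{Z})$. A parallel check gives simplicity of each $S_{m+kr}$ with no further restriction on $k$. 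Uniqueness then follows because the highest weight is an isomorphism invariant and the two index sets $(\mathbb{C} \setminus \mathbb{Z}) \cup (-1 + r\mathbb{Z})$ and $\mathbb{Z} \setminus (-1 + r\mathbb{Z})$ are disjoint subsets of $\mathbb{C}$, so every simple appears in the stated list exactly once.
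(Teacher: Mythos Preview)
Your proof is correct and takes essentially the same approach as the paper: the paper simply invokes the standard highest-weight classification argument for $\mathfrak{sl}_2$-type modules (citing Kassel, Section~V.4), while you carry that argument out in detail. One minor point worth making explicit is that the $E$-annihilated vector $v_0$ can be chosen to be an $H$-weight vector (since $\ker E$ is $H$-stable and $V$ is a weight module), but this is routine.
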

	\begin{proof}
		An argument analogous to that of finite dimensional  $\mathfrak{sl}_2$-modules (see for example \cite[Section V.4]{kassel1995quantum}) shows the following: (1) every non-zero  $\Ubar$-module in $\cat$ has a highest weight vector and (2) if $W$ is a simple $\Ubar$-module in $\cat$ then it is uniquely
		determined, up to isomorphism, by its highest weight $\lambda \in
		\C$.  The lemma then follows from the fact that the highest weights of
		modules in the above list are in bijection with the elements of $\C$.
	\end{proof} 
	
	When $\alpha = n+kr$, $n=0,...,r-2$, the module $V_\alpha$ is no longer irreducible.  Instead, there is a non-split short exact sequence 
	$$
	0\to S_{n +kr-2(n+1)}\to V_{n+kr} \to S_{n +kr}\to 0
	$$ 
	where the first morphism is determined by sending the highest weight vector of $S_{n +kr-2(n+1)}^{\rtu}$ to $v_{n+1}$ and the second morphism is given by sending the highest weight vector $V_{n+kr} $ to the highest weight vector of $S_{n +kr}^{\rtu}$.  
	The families 
	$$\{V_\alpha\}_{\alpha \in (\C\setminus \Z) \cup (-1+r\Z)} \text{ and } \{S_n^q\}_{n \in \Z_{\geq 0}}$$ 
	are used to define the ADO invariant and colored Jones polynomial, respectively.

	\subsubsection{The ribbon structure on \texorpdfstring{$\cat$}{C}}\label{SS:RibbonStructure}
	
	Here we recall that $\cat$ is a ribbon category, for details see for example \cite{geer2009modified,costantino2015some,geer2018trace}.  We will describe the ribbon structure in terms of left/right dualities and a braiding.  This formulation follows \cite{geer2018trace}, where it is shown that a ribbon category can be defined as a pivotal braided category satisfying certain compatibility constraints on the natural twist morphism defined from the braiding and dualities.  This structure will be used later while defining link invariants.    
	
	Since $\Ubar$ is a Hopf algebra, $\cat$ is a monoidal category where
	the unit $\unit$ is the 1-dimensional trivial module $\C$.  Moreover,
	$\cat$ is $\C$-linear: hom-sets are $\C$-modules, the composition and
	tensor product of morphisms are $\C$-bilinear, and
	$\End_\cat(\unit)=\C\Id_\unit$.  We will often denote the unit
	$\unit$ by $\C$. 
	
	Let $V$ and $W$ be
	objects of $\cat$.  Let $\{v_i\}$ be a basis of $V$ and $\{v_i^*\}$ be
	the dual basis of $V^*=\Hom_\C(V,\C)$.  The duality morphisms of $\cat$ are
	\begin{align*}
	\coev_V :& \C \rightarrow V\otimes V^{*}, \text{ given by } 1 \mapsto \sum
	v_i\otimes v_i^*,  &
	\ev_V: & V^*\otimes V\rightarrow \C, \text{ given by }
	f\otimes w \mapsto f(w),\\
	\tcoev_V :& \C \rightarrow V^*\otimes V, \text{ given by } 1 \mapsto \sum
	v_i^*\otimes K^{r-1}v_i,  &
	\tev_V: & V\otimes V^*\rightarrow \C, \text{ given by }
	w\otimes f \mapsto f(K^{1-r}w).
	\end{align*}
	  As shown in \cite{geer2018trace} these morphisms define a pivotal structure on $\cat$.  
	Taking $V=V_\alpha$, the cup and cap morphisms can be written
\begin{align}\label{E:genericCupCap}
		\tcoev_{V_\alpha} :\,& \C \rightarrow V_\alpha^*\otimes V_\alpha, \text{ given by } 1 \mapsto \sum
	\rtu^{(r-1)(\alpha - 2i)} v_i^*\otimes  v_i, \notag \\
	\tev_V:\, & V\otimes V^*\rightarrow \C, \text{ given by }
	v_i\otimes v_j^* \mapsto \rtu^{(1-r)(\alpha - 2i)}\delta_{ij}
	\end{align}
	where $\delta_{ij}$ denotes the Kronecker delta.

	In \cite{ohtsuki2002quantum}, Ohtsuki truncates the usual formula of the $h$-adic
	quantum $R$-matrix to define an operator on $V\otimes W$ by
	\begin{equation}\label{eq:R-matrix}
	R = \rtu^{H\otimes H/2}\sum_{k=0}^{r-1} \frac{(\rtu-\rtu^{-1})^{2k}}{\rtu^k(\rtu^{-2};\rtu^{-2})_k} E^k \otimes F^k.
	\end{equation}
	where 
	\begin{equation}\label{E:PochhammerSym}
	(x;p)_n := \begin{cases}
	\displaystyle \prod_{k=0}^{n-1}(1-xp^k) & \text{if } n >0 \\
	0 & \text{otherwise} \\
	\end{cases}
	\end{equation}
	denotes the $q$-factorial (\emph{a.k.a.} $q$-Pochhammer symbol or quantum dilogarithm \cite{FaddeevKashaev}) and 
	$\rtu^{H\otimes H/2}$ is the operator given by  
	$$\rtu^{H\otimes H/2}(v\otimes v') =\rtu^{\lambda \lambda'/2}v\otimes v'$$
	for weight vectors $v$ and $v'$ of weights of $\lambda$ and
	$\lambda'$. We call $R$ the \emph{truncated $R$-matrix}.  It is not an element in $\Ubar\otimes \Ubar$; 
	however, the action of $R$ on the tensor product of two objects of 
	$\cat$ is a well-defined linear map.  Moreover, $R$ gives rise to a braiding $c_{V,W}:V\otimes W
	\rightarrow W \otimes V$ on $\cat$ defined by $v\otimes w \mapsto
	\tau(R(v\otimes w))$ where $\tau$ is the permutation $x\otimes
	y\mapsto y\otimes x$.  
	The inverse of the operator $R$ is
	\begin{equation}
  \label{eq:R}
  R^{-1}
 = \left(\sum_{k=0}^{r-1}(-1)^k  \frac{(\rtu-\rtu^{-1})^{2k}}{\rtu^{k^2}(\rtu^{-2};\rtu^{-2})_k} 
  E^k\otimes F^k \right)\rtu^{-H\otimes H/2}.
  \end{equation}

	For later reference, we compute the coefficients of the truncated $R$-matrix acting on $v_a \otimes w_b \in V_\alpha \otimes V_\beta$:
	\begin{align}\label{E:RmatrixAction}
	R(v_a&\otimes w_b) = q^{\frac{1}{2} H\otimes H}\sum_{k=0}^{r-1} \frac{(q-q^{-1})^{2k}}{q^k(q^{-2};q^{-2})_k} E^kv_a \otimes F^kw_b \notag \\
	&=  q^{\frac{1}{2}H\otimes H}\sum_{k=0}^{r-1} (-1)^kq^{k(\alpha -a-b -1)}\frac{(q^{-2(\alpha-a+1)};q^{-2})_k(q^{2(b+1)};q^2)_k}{(q^{-2};q^{-2})_k} v_{a-k}\otimes w_{b+k}  \notag \\
	&= \sum_{k=0}^{r-1} (-1)^kq^{k(\alpha -a-b-1 )}q^{\frac{1}{2}\lambda_{a-k}^\alpha\lambda_{b+k}^\beta} \frac{(q^{-2(\alpha-a+1)};q^{-2})_k(q^{2(b+1)};q^2)_k}{(q^{-2};q^{-2})_k}v_{a-k}\otimes w_{b+k}
	\end{align}
	where $\lambda_{b+k}^\beta = \beta-2(b+k)$ and $\lambda_{a-k}^\alpha = \alpha-2(a-k)$ are the weights of $w_{b+k}$ and $v_{a-k}$, respectively; and with $q=\rtu$.
	A similar calculation reveals the following coefficients for the inverse:
\begin{equation}\label{E:RInvMatrixAction}
R^{-1}(v_a\otimes w_b) =\sum_{k=0}^{r-1}(-1)^k q^{-\frac{1}{2}\lambda_a^\alpha\lambda_b^\beta} q^{k(\alpha -a -b +1)}\frac{(q^{-2(\alpha-a+1)};q^{-2})_k(q^{2(b+1)};q^2)_k}{(q^2;q^2)_k}v_{a-k}\otimes w_{b+k}\,,
\end{equation}
again with $q=\rtu$\,.

\subsection{The ADO invariant}\label{sec:ADO}

	A cousin of the Reshetikhin-Turaev family of invariants \cite{reshetikhin1990ribbon}, the ADO invariant is based on a functor from a category that formalizes link diagrams to a category of representations. The intent of this section is to provide a concise review of this invariant, along the way establishing notation. 
In this paper we always consider \emph{framed} and \emph{oriented} links and tangles.

	We consider framed oriented tangles whose components are colored (or labeled) by elements of~$\cat$.  Such tangles are called \emph{$\cat$-colored ribbons}.  Let $\Rib_\cat$ be the category of $\cat$-colored ribbons (for details see \cite[XIV.5.1]{kassel1995quantum}). The well-known Reshetikhin-Turaev construction defines a $\C$-linear functor 
	$$F:\Rib_\cat\to \cat$$
	(for details see \emph{e.g.} \cite{kassel1995quantum,turaev2016quantum}).  
	The value of any $\cat$-colored ribbon under $F$ can be computed using the six \emph{building blocks}, which are the morphisms $\overarrowsplus,\overarrowsminus,\downcurvearrowleft,\downcurvearrowright,\curvearrowleft,\curvearrowright$ in  $\Rib_\cat$.   The functor $F$ transforms these building blocks as follows:
	\begin{equation}\label{E:building-blocks}
	\begin{array}{l@{\quad}l@{\quad}l}
	F(\overarrowsplus) = \tau\circ R,& F(\downcurvearrowright) = \coev_V, & F(\downcurvearrowleft) = \tcoev_V\,, \\[.1cm]
	F(\overarrowsminus) = \tau\circ R^{-1},& F(\curvearrowright) = \ev_{V},& F(\curvearrowleft) = \tev_V.
	\end{array}
	\end{equation}
where $\tau(v\otimes w)=w\otimes v$ permutes the factors. Vertical lines are sent to the identity morphism and reversing the direction of an arrow is equivalent to coloring instead by the dual module. 
	
	If  $L$ is a link with some component labeled by a simple object $V\in \cat$ then by cutting this component we obtain a $(1,1)$ tangle $T$ whose two ends are labeled with $V$.   By definition $F(T)\in \End_\cat(V)$.  Since $V$ is simple, this endomorphism is the product of
	the identity $\Id_V:V\to V$ with an element $\langle T \rangle$ of the ground
	ring of $\cat$, i.e. $F(T)= \langle T \rangle \Id_V$.  In particular,
	\begin{equation}\label{E:DefF}
	\begin{aligned}
	F(L)&=F\left(\pic{0.6ex}{
		\begin{picture}(10,11)(1,0)
		\drawTr
		\put(2,4.5){$\mathsmall{T}$}
		\put(10,4){$\mathsmall{V}$}
		\end{picture}}\;\right)
	=\langle T \rangle\,F\left(\pic{0.6ex}{
		\begin{picture}(10,11)(1,0)
		\drawTr
		\put(0.3,4.5){$\mathsmall{\Id_V}$}
		\put(10,4){$\mathsmall{V}$}
		\end{picture}}\;\right)\\
	&=\langle T \rangle\, F\left( \pic{0.6ex}{
		\begin{picture}(8,7)(1,0)
		\drawQDim
		\put(7,2){$\mathsmall{V}$}
		\end{picture}}\right) 
	=\langle T \rangle (\tev_V\circ \coev_V) 
	= \langle T \rangle \qdim_\cat(V).
	\end{aligned}
	\end{equation}
	When $V=V_\alpha$ is typical then a direct calculation shows that quantum dimension vanishes:
	\be \qdim_\cat(V_\alpha) :=(\tev_{V_\alpha}\circ \coev_{V_\alpha}) =0\,. \ee
	See \cite{geer2009modified} for further details.   Thus, from Equation \eqref{E:DefF} we have that $F(L)=0$ if any component of $L$ is colored by a typical module $V_\alpha$.  
	
	In \cite{akutsu1992invariants}, Akutsu, Deguchi, and Ohtsuki showed that one can replace such a vanishing quantum dimension in Equation \eqref{E:DefF} with a non-zero scalar and obtain an invariant which is now known as the ADO invariant. This process was extended to a general theory in \cite{geer2009modified}.  We will briefly recall this construction.     
	
	Consider the function $\md$ from the set of typical modules to $\C$ given by
	\begin{equation}\label{E:modified-dimension}
	\md(V_\alpha) = \prod_{j=0}^{r-2}\dfrac{1}{\rtu^{\alpha + r - j} - \rtu^{-(\alpha + r -j)}} = -\rtu^{\frac12r(1-r)} \frac{\rtu^{\alpha+1}-\rtu^{-(\alpha+1)}}{\rtu^{r\alpha}-\rtu^{-r\alpha}}\,.
	\end{equation}
	This function is called the \emph{modified dimension}.  Let $L$ be a $\cat$-colored framed link with at least one component colored by a typical module $V_\alpha$.  Cutting this component as above, we obtain a $(1,1)$ tangle $T_{\alpha}$.  Then Proposition 35 of \cite{geer2009modified} implies that the assignment 
	$$
	L\mapsto F'(L):=\md(V_\alpha) \langle T_{\alpha}  \rangle
	$$
	is independent of the choice of the component to be cut and yields a well-defined isotopy invariant  of $L$.  This is the aforementioned ADO invariant.

In the remainder of this paper we will assume that \emph{every} strand of an $n$-strand link $L$ is colored by a typical module $V_{\alpha_i}$, $i=1,...,n$. Then it does not matter which strand is cut, and we can choose it without loss of generality to be the one labeled by $V_{\alpha_1}$. The corresponding ADO invariant defines a function
\be \label{def-ADO} N_L^r : (\C\backslash\Z)^n \to \C\,,\qquad N_L^r(\alpha_1,...,\alpha_n) = \md(V_{\alpha_1}) \langle T_{\alpha_1} \rangle\,. \ee
Establishing $q$-holonomic properties of this family of functions for $r\geq 2$ is the central focus of this paper.

The diagrammatic calculus summarized here will compute the ADO invariant in a blackboard framing. One may use the ribbon element in the category (or add extra loops to a diagram) to change to an arbitrary framing. Changing the framing of the $i$-th strand by $f$ units multiples the ADO invariant by a prefactor
\be \rtu^{\frac12 f[\alpha^2+2(1-r)\alpha]} \label{framing} \ee

\subsection{A two-step reconstruction of the ADO invariant}
\label{sec:GD}

For analyzing the $q$-holonomic properties of the ADO invariant $N_L^r$, it will be useful to split its construction into two steps:

\begin{itemize}
\item[1)] Cut an $n$-strand link $L$ to get a (1,1) tangle $T$ with a particular choice of diagram $\mathbb D$, arranged so that all crossings are of the form $\overarrowsplus$ or $\overarrowsminus$.  To this diagram we will associate a function
\be G_{\mathbb D} : \Z \to \mathbb V_n\,, \ee
where 
\be \label{defVn} \mathbb V_n :=\C(q^{\frac12},x_1^{\frac12},...,x_n^{\frac12},z_{11},z_{12},...,z_{nn}) \ee
is the field of rational functions in $1+n+\frac12 n(n+1) = \frac12(n+1)(n+2)$ formal variables $q^{\frac12}$, $\{x_i^{\frac12}\}_{i=1}^n$, and $\{z_{ij}\}_{i,j=1}^n$, with $z_{ij}=z_{ji}$. 
We call the function $G_{\mathbb D}$ the ``diagram invariant.''

\item[2)] For each $r\in \Z_{\geq 2}$ we specialize the variables in $G_{\mathbb D}(r)$ as
\be \label{ADO-spec}  q^{1/2} = \rtu^{1/2}\,,\quad x_i^{1/2}=\rtu^{\alpha_i/2}\,,\quad z_{ij}=\rtu^{\alpha_i\alpha_j/2}\,. \ee
to get the ADO invariant $N_L^r(\alpha)$. It will be clear from the construction of $G_{\mathbb D}$ that this specialization makes sense. More compactly: if we write $G_{\mathbb D}(r;x^{\frac12},z;q^{\frac12})$ to explicitly emphasize the dependence on $x,z,q$ for each value of $r$, then
\be \label{ADO-spec2} N_L^r(\alpha) = G_{\mathbb D}(r;\rtu^\alpha/2,\rtu^{\alpha\otimes \alpha/2};\rtu^{1/2})\,. \ee

\end{itemize}

To define  $G_{\mathbb D}$, suppose we are given a $(1,1)$ tangle diagram $\mathbb D$ arranged so that all crossings look like $\overarrowsplus$ or $\overarrowsminus$.

Let $n$ denote the number of components (strands) of the tangle. Let $m$ denote the number of arcs of the diagram $\mathbb D$, where by ``arc'' we mean a curve in the diagram that starts at one crossing and ends at the next, \emph{regardless} of whether the crossings go over or under. For example, the standard (1,1) tangle representing the trefoil has seven arcs (see Figure~\ref{fig:labeled-diagram}); and a general tangle diagram with $C$ crossings and $U$ disjoint flat unknot components (simple closed curves) has exactly $2C+U+1$ arcs.

\begin{figure}[htb]
\centering
\includegraphics[width=2in]{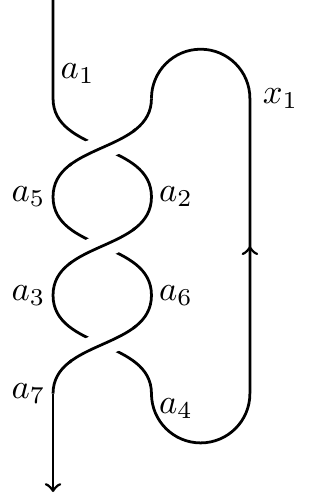}
\caption{Labeled tangle diagram whose closure is a trefoil knot.}
 \label{fig:labeled-diagram}
\end{figure}

We label each component of the tangle with a distinct variable $x_1,\ldots, x_n$. By convention the unique open component will be labeled by $x_1$.
We also label each arc with a distinct parameter $a_1,...,a_m$, which is to be thought of as an integer-valued variable.

We decompose the diagram $\mathbb D$ into crossings, cups, and caps. Then to each of these building blocks we associate a function of $m+1$ integral variables $a_1,...,a_m, r$, valued in $\mathbb V_n$, given by
\begin{subequations} \label{diag-piece-functions}
\begin{align}
\raisebox{-.2in}{\includegraphics[width=.5in]{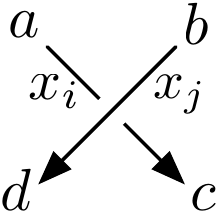}} &\rightsquigarrow 
	R_{c,d}^{a,b}[x_i,x_j] \label{R-tensor}   \\
	&:= \delta_{a-c,d-b} \vartheta_{c\leq a}\vartheta_{d\geq b}\,
	(-x_i)^{a-c}q^{(c-a)(a+b+1 )+2cd} z_{ij} x_i^{-d}x_j^{-c}\frac{(q^{2(a-1)}x_i^{-2};q^{-2})_{a-c}(q^{2(b+1)};q^2)_{a-c}}{(q^{-2};q^{-2})_{a-c}}  \notag
	\end{align}
	
\begin{align}
\raisebox{-.2in}{\includegraphics[width=.5in]{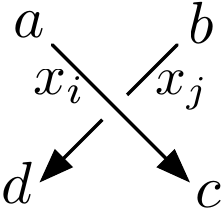}} &\rightsquigarrow 
	(R^{-1})_{c,d}^{a,b}[x_i,x_j] \label{Ri-tensor}  \\
	&:=\delta_{a-c,d-b}   \vartheta_{c\leq a}\vartheta_{d\geq b}\,
(-x_i)^k  q^{(c-a)(a +b -1)-2ab}z_{ij}^{-1}x_i^b x_j^a\frac{(q^{2(a-1)}x_i^{-2};q^{-2})_{a-c}(q^{2(b+1)};q^2)_{a-c}}{(q^{2};q^{2})_{a-c}}  \notag
	\end{align}
\be \label{e-tensor} \begin{array}{l@{\qquad\qquad}l}
 \raisebox{-.1in}{\includegraphics[width=.44in]{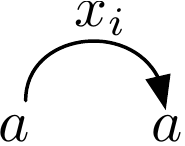}} \rightsquigarrow \epsilon_{a}[x_i]  = 1 
 & \raisebox{-.1in}{\includegraphics[width=.44in]{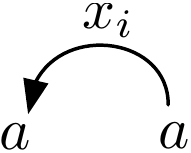}} \rightsquigarrow \epsilon^*_{a}[x_i] = q^{2a(r-1)}x_i^{1-r}   \\[.7cm]
 \raisebox{-.13in}{\includegraphics[width=.4in]{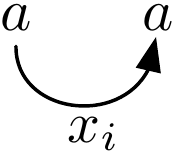}} \rightsquigarrow \eta_{a}[x_i] = 1
 & \raisebox{-.13in}{\includegraphics[width=.4in]{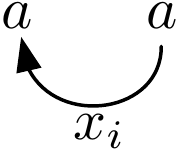}} \rightsquigarrow \eta^*_{a}[x_i] = q^{2a(1-r)} x_i^{r-1} 
\end{array} \ee 
Here we have used $a,b,c,d$ to denote the subset of arc variables $a_1,...,a_m$ present at a particular crossings. We have also used
\be \delta_{a,b} := \begin{cases} 1 & a=b \\ 0 & \text{otherwise}\end{cases}\,,\qquad \vartheta_{a\leq b} := \begin{cases} 1 & a\leq b \\ 0 &\text{otherwise}\end{cases}\,. \ee
We are thinking of each of the maps $R[x_i,x_j],R^{-1}[x_i,x_j],\epsilon[x_i],\epsilon^*[x_i],\eta[x_i],\eta^*[x_i]$ associated to particular crossings, cups, or caps as functions of the full set of arc variables $a_1,...,a_m$ together with $r$ --- though they are independent of the arc variables that do not appear in the building block under consideration. 

We similarly rewrite the modified quantum dimension \eqref{E:modified-dimension} associated to component $i$ as
\be \label{di} \md[x_i] =  \prod_{j=2}^{r}\dfrac{1}{q^j x_i- q^{-j} x_i^{-1}} = (-x)^{r-1} q^{\frac12 r(r+1)-1} \frac{1}{(q^4 x_i^2;q^2)_{r-1}}\,,
 \ee
\end{subequations}
thought of as a function of all $m+1$ integer variables, which depends non-trivially only on $r$.%
\footnote{One might wonder why we did not ``analytically continue'' the simpler formula on the RHS of \eqref{E:modified-dimension} to obtain $\md[x_i]=-q^{\frac12 r(1-r)} \frac{qx_i-(qx_i)^{-1}}{x_i^r-x_i^{-r}}$. The answer is that \eqref{di} turns out to be $q$-holonomic, whereas this latter expression is not!} Altogether, each function in \eqref{R-tensor}--\eqref{di} has domain $\Z^{m+1}$ and is valued in $\mathbb V_n$.

We define a function $G_{\mathbb D}^\times: \Z^{m+1} \to \mathbb V_n$ by multiplying together together the functions associated to every crossing, cup, and cap in the diagram; a function $\md[x_1]$ for the open link component (labeled $x_1$ by convention); and delta-functions $\delta_{a_1,0}$, $\delta_{a_m,0}$ for the two arcs at the open ends of the $(1,1)$ tangle (labeled, say, $a_1$ and $a_m$).
Schematically,
\be \label{GDx} G_{\mathbb D}^\times(a_1,...,a_m;r) = \md[x_1]\delta_{a_1^{},0}\delta_{a_m^{},0}\prod_{\overarrowsplus}R\prod_{\overarrowsminus}R^{-1}\prod_{\curvearrowright}\epsilon  \prod_{\curvearrowleft}\epsilon^* \prod_{\downcurvearrowright}\eta \prod_{\downcurvearrowleft}\eta^*\,. \ee
From this we define the \emph{diagram invariant} $G_{\mathbb D}:\Z\to \mathbb V_n$ as the multisum of $G_{\mathbb D}^\times$ over the arc variables
\be \label{GD} G_{\mathbb D}(r) := \sum_{a_1,...,a_m\in[0,r-1]^m} G_{\mathbb D}^\times(a_1,...,a_m;r) \ee

Once we fix $r\geq 2$ and specialize $q=\rtu$, $x_i=\rtu^{\alpha_i}$, and $z_{ij}=\rtu^{\alpha_i\alpha_j/2}$, each of the functions $R,R^{-1},\epsilon,\epsilon^*,\eta,\eta^*,\md$ above simply becomes a matrix element of the building blocks from \eqref{E:building-blocks}. This is easy to see by comparing with the formulas \eqref{E:RmatrixAction}, \eqref{E:RInvMatrixAction}, \eqref{E:genericCupCap}, \eqref{E:modified-dimension}. The multisum in \eqref{GD} reproduces the composition of building blocks \eqref{E:building-blocks}, summing over bases of the typical representations along the strands. Thus, altogether, the specialization of $G_{\mathbb D}(r)$ as in \eqref{ADO-spec} reproduces the ADO invariant $N_L^r(\alpha)$.

\medskip
\noindent\underline{\emph{Example}} \medskip

The labeled diagram of a (1,1) tangle whose closure is a trefoil knot is shown in Figure \ref{fig:labeled-diagram}. There are seven arcs with associated variables $a_1,\ldots, a_7$ and one component with associated variable $x_1$. The corresponding diagram invariant is
$$ G_\mathbb{D}(r) = \hspace{-1ex}  \sum_{a_1^{},\ldots,a_7^{} =0}^{r-1} \md[x_1] \delta_{a_1^{},0}\delta_{a_7^{},0} R^{a_1,a_4}_{a_2,a_5}[x_1,x_1]R^{a_5,a_2}_{a_6,a_3}[x_1,x_1]R^{a_3,a_6}_{a_4,a_7}[x_1,x_1] \epsilon^*_{a_4^{}}[x_1] \eta_{a_4^{}}[x_1]
$$

\subsection{Structural properties and relation to the colored Jones}
\label{sec:struc}

Several structural properties of the ADO invariant become manifest in the two-step construction of Section \ref{sec:GD}.

The only denominators that appear in the functions $R^\pm,\epsilon^{(*)},\eta^{(*)},\md$, which aren't just monomials in $x,z,q$, are $(q^4 x_i^2;q^2)_{r-1}$ in the modified dimension $\md[x_i]$ and $(q^{2};q^{2})_{a-c}$, $(q^{-2};q^{-2})_{a-c}$ in the R-matrices. A short exercise shows that the denominators in the R-matrices divide the numerators, as both $\frac{q^{2(b+1)};q^2)_k}{(q^2;q^2)_k}$ and $\frac{q^{2(b+1)};q^2)_{k}}{(q^{-2};q^{-2})_k}$ belong to $\C[q,q^{-1}]$ for all $k,b\in \Z_{\geq 0}$. Thus after simplification the only \emph{possible} denominator in $G_{\mathbb D}(r)$ is $x_1^r-x_1^{-r}$.
 
Moreover, only integral powers of $x,z,q$ appear; and the only place that $z_{ij}$ (resp. $z_{ij}^{-1}$) appears is as a prefactor in the $R$ function (resp. $R^{-1}$ function) for positive (resp. negative) crossing of components $i$ and $j$. Altogether this implies that

\begin{prop} \label{prop:struc} For each $r\geq 1$,
\be \label{G-struc} G_{\mathbb D}(r) \in \frac{1}{(q^4 x_1^2;q^2)_{r-1}} \prod_{i,j=1}^n z_{ij}^{C_{ij}}\; \C[x_1^{\pm},...,x_n^{\pm},q^{\pm}]\,, \ee
where $C_{ij}$ is the linking matrix of the original framed link $L$ ($C_{ii}$ being the framing of the $i$-th component).
\end{prop}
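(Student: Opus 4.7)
The plan is to inspect the finite sum \eqref{GD} term by term: since the claimed right-hand side of \eqref{G-struc} is closed under addition, it suffices to verify that every term $G_{\mathbb D}^\times(a_1,\ldots,a_m;r)$ belongs to the indicated module. Each such term is a product of the basic pieces in \eqref{diag-piece-functions} together with the factor $\md[x_1]$, so I would track three separate aspects: (i) which denominators can appear, (ii) the total exponent of each $z_{ij}$, and (iii) that the remaining dependence on $x$ and $q$ is Laurent-polynomial.

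First, the cap/cup factors $\epsilon_a,\epsilon_a^*,\eta_a,\eta_a^*$ are manifest Laurent monomials in $q$ and $x_i$, and $\md[x_1]$ as written in \eqref{di} contributes precisely the single denominator $(q^4 x_1^2; q^2)_{r-1}$ and otherwise lies in $\C[x_1^\pm, q^\pm]$. The only remaining potential denominators are the $q$-factorials $(q^{2};q^{2})_{a-c}$ and $(q^{-2};q^{-2})_{a-c}$ in \eqref{R-tensor}, \eqref{Ri-tensor}. The key observation is that when $c \leq a$ (forced by $\vartheta_{c\leq a}$), the ratio
\[
\frac{(q^{2(b+1)};q^2)_{a-c}}{(q^2;q^2)_{a-c}} \;=\; \binom{b+a-c}{a-c}_{\!q^2}
\]
is a Gaussian binomial coefficient and hence lies in $\Z[q^2]$, while $(q^{-2};q^{-2})_{a-c}$ differs from $(q^{2};q^{2})_{a-c}$ only by the unit $\pm q^{-k(k+1)}$ obtained by reversing each factor in the $q$-Pochhammer product. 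The factor $(q^{2(a-1)}x_i^{-2};q^{-2})_{a-c}$ is visibly a polynomial in $x_i^{-2}$ with coefficients in $\C[q^\pm]$. Hence the entire R-matrix piece lies in $z_{ij}^{\pm 1}\,\C[x_i^\pm, x_j^\pm, q^\pm]$, and no denominator beyond $(q^4 x_1^2; q^2)_{r-1}$ survives.

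Second, inspecting \eqref{R-tensor} and \eqref{Ri-tensor}, the variable $z_{ij}$ appears exactly once (to the power $+1$ for a positive crossing between strands colored $x_i,x_j$, and to the power $-1$ for a negative crossing), and in no other piece of \eqref{GDx}. Summing over all crossings of the diagram, the total exponent of $z_{ij}$ is the signed crossing count between strands $i$ and $j$: for $i\ne j$ this is $2\,\ell k(L_i,L_j) = C_{ij}+C_{ji}$, and for $i=j$ it is the writhe, which for a blackboard-framed diagram equals the framing $C_{ii}$. Using the symmetry $z_{ij}=z_{ji}$ and $C_{ij}=C_{ji}$, this matches the asserted $\prod_{i,j=1}^n z_{ij}^{C_{ij}}$.

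Combining these two observations, each $G_{\mathbb D}^\times(a_1,\ldots,a_m;r)$ lies in the module on the right-hand side of \eqref{G-struc}, and the bounded multisum \eqref{GD} preserves this membership, establishing the proposition. The only real obstacle is the $q$-factorial cancellation in step one; the rest is bookkeeping of monomial prefactors and of signed crossing counts. (If $\mathbb D$ is not blackboard-framed, one first replaces it by the equivalent blackboard-framed diagram obtained by adding the appropriate number of kinks on each component, whose self-crossings make the writhe equal to the framing, without altering the sets of variables in play.)
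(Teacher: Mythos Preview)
Your proof is correct and follows essentially the same approach as the paper's own argument (which appears as the paragraph immediately preceding the proposition): track denominators to see that only $(q^4 x_1^2;q^2)_{r-1}$ from $\md[x_1]$ survives because the $q$-factorial ratios in the R-matrices are Gaussian binomials, and observe that each $z_{ij}^{\pm 1}$ appears exactly once per $\pm$ crossing so the total exponent is governed by the linking matrix. Your write-up is in fact more explicit than the paper's, spelling out the Gaussian binomial identity, the unit relating $(q^{-2};q^{-2})_k$ to $(q^{2};q^{2})_k$, and the signed-crossing bookkeeping for $i=j$ versus $i\neq j$.
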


\begin{cor} \label{cor:struc} \hspace{1in}

\noindent (i) If $L$ is a knot ($n=1$ strands), the ADO invariant $N^r_L:\C\backslash\Z\to \C$ may be extended to a meromorphic function of $\alpha=\alpha_1\in \C$ with at most simple poles at each integer.

\noindent (ii) If $L$ is a link with $n>1$ strands, the ADO invariant $N^r_L:(\C\backslash\Z)^n\to \C$ may be extended to a holomorphic function of $\alpha=(\alpha_1,...,\alpha_n)\in \C^n$.

\noindent (iii) For any $n$, $N_L^r$ is quasi-periodic, satisfying
\be N_L^r(\alpha_1,...,\alpha_i+2r,...,\alpha_n) = \Big(\prod_{j=1}^n \rtu^{2rC_{ij}\alpha_j} \Big)N_L^r(\alpha_1,...,\alpha_i,...,\alpha_n) \label{quasi-per} \ee
In other words, $N_L^r$ is a section (holomorphic if $n>1$, meromorphic if $n=1$) of a complex line bundle on $(\C/2r\Z)^n$ determined by the linking matrix $C_{ij}$. 
\end{cor}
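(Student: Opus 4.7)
The plan is to deduce all three parts from Proposition~\ref{prop:struc} together with the specialization \eqref{ADO-spec2}. Substituting $q=\rtu$, $x_i=\rtu^{\alpha_i}$, $z_{ij}=\rtu^{\alpha_i\alpha_j/2}$ turns each variable into an entire function of $\alpha\in\C^n$, so Proposition~\ref{prop:struc} automatically exhibits $N_L^r$ as a meromorphic function on $\C^n$ whose possible poles lie in the zero set of the specialized Pochhammer denominator $(\rtu^{4+2\alpha_1};\rtu^2)_{r-1}=\prod_{k=0}^{r-2}\bigl(1-\rtu^{2(\alpha_1+2+k)}\bigr)$. Everything then reduces to controlling this denominator and being clever about which strand is cut.

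For part (iii), I would just verify quasi-periodicity by direct computation: under $\alpha_i\mapsto\alpha_i+2r$, every $\rtu^{\alpha_j}$ is fixed (so the Laurent-polynomial factor and the denominator of Proposition~\ref{prop:struc} are invariant), while $z_{jk}=\rtu^{\alpha_j\alpha_k/2}$ acquires $\rtu^{r\alpha_k}$ if $j=i$ and $\rtu^{r\alpha_j}$ if $k=i$. A short bookkeeping check (using $z_{ij}=z_{ji}$, $C_{ij}=C_{ji}$, and $\rtu^{2r^2}=1$ to handle the $z_{ii}$ contribution) shows the prefactor $\prod_{j,k}z_{jk}^{C_{jk}}$ acquires exactly the factor $\prod_j\rtu^{2rC_{ij}\alpha_j}$ in \eqref{quasi-per}.

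For part (i), with $n=1$, each factor $\bigl(1-\rtu^{2(\alpha+2+k)}\bigr)$ is a simple zero along the hyperplane $\alpha\in -2-k+r\Z$. At a given integer $\alpha=m$, the congruence $m+2+k\equiv 0\pmod r$ picks out at most one value of $k\in\{0,\ldots,r-2\}$ --- exactly one when $m\not\equiv -1\pmod r$, none when $m\equiv -1\pmod r$ --- so the denominator has at most a simple zero at any integer, giving at most simple poles.

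Part (ii) is the main point, and its key input is that $N_L^r$ is independent of the strand cut to form the $(1,1)$-tangle. My plan is to apply Proposition~\ref{prop:struc} to $n$ diagrams $\mathbb D^{(i)}$ that cut strand $i$ instead of strand $1$; each yields a meromorphic extension of $N_L^r$ with possible poles only on $\{\alpha_i\in\Z\setminus(-1+r\Z)\}$, and these $n$ extensions must coincide on all of $\C^n$ by uniqueness of meromorphic continuation from the common domain $(\C\setminus\Z)^n$. The resulting single meromorphic function therefore has polar set contained in $\bigcap_{i=1}^n\{\alpha_i\in\Z\setminus(-1+r\Z)\}$, a set of complex codimension $n\geq 2$. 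Since the polar set of a meromorphic function on a connected complex manifold is either empty or of pure codimension one (locally cut out by the denominator of a reduced fraction), it must be empty, and $N_L^r$ is holomorphic. The one subtle point I would want to nail down is that Proposition~\ref{prop:struc} really does apply with any chosen strand in the role of the cut component; but inspecting \eqref{GDx}, the asymmetry between strand $1$ and the others enters only through the single factor $\md[x_1]$, which is precisely what produces the denominator $(q^4x_1^2;q^2)_{r-1}$, so cutting strand $i$ simply replaces this factor by $\md[x_i]$ and preserves the rest of the structure.
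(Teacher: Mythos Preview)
Your proposal is correct and follows essentially the same route as the paper: deduce everything from Proposition~\ref{prop:struc} plus the specialization, handle (iii) by tracking how the $z_{ij}$ prefactors transform, and for (ii) exploit independence of the cut strand. The only notable difference is in part~(i): the paper shortcuts your Pochhammer-factor analysis by observing that at $q=\rtu$, $x_1=\rtu^{\alpha}$ the modified dimension collapses to the closed form on the RHS of~\eqref{E:modified-dimension}, whose sole denominator is $\rtu^{r\alpha}-\rtu^{-r\alpha}=2i\sin(\pi\alpha)$, making the simple-pole statement immediate. Your direct counting argument is equally valid. For~(ii) you are actually more careful than the paper, which asserts rather tersely that cutting a different strand rules out poles in $\alpha_1$; your explicit codimension-of-the-polar-set argument is the right way to make that step rigorous.
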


\noindent\emph{Proof}. For (i) we observe that after specializing $q=\rtu$ and $x_1=\rtu^\alpha$, the modified quantum dimension may be rewritten as on the RHS of \eqref{E:modified-dimension}, so the only denominator that could give rise to poles is $\rtu^{r\alpha}-\rtu^{-r\alpha}=2i\sin(\pi\alpha)$.

Part (ii) follows by recalling that the ADO invariant does not depend on the choice of strand that we cut to represent it as a (1,1) tangle. From \eqref{G-struc} and the same reasoning as in Part (i), it is clear that there are no poles in $\alpha_i$ for $i\neq 1$; and by constructing the ADO with a different choice of cut strand it follows that there can be no poles in $\alpha_1$ either.

Part (iii) follows from observing that, with the exception of the $z_{ij}^{C_{ij}}$ prefactors, the expression \eqref{G-struc} is a function of $x_i=\rtu^{\alpha_i}$, which are periodic. The prefactors $z_{ij}^{C_{ij}}=\rtu^{\frac12 C_{ij}\alpha_i\alpha_j}$ lead precisely to the quasi-periodicity \eqref{quasi-per}. \phantom{X}\hfill$\square$ \\

In the case of a knot, one might expect the residues of the poles at integer values of $\alpha$ to be related to colored Jones polynomials. This is because upon setting $\alpha=N-1$ (for $N\in \Z\backslash r\Z$) the typical module $V_\alpha$ becomes reducible and contains the module $S_{N-1}$ used to define the $N$-th colored Jones polynomial as a simple quotient. This expectation was made precise in Corollary 15 of \cite{CGP-relations}, which we restate here:

\begin{prop} \label{prop:Jones} (\cite[Cor. 15]{CGP-relations})
Let $K$ be an oriented knot with framing $f$. Let $r\geq 2$, and let $N\in N\backslash r \Z$. Let $J_N^K(q)\in \C[q^\pm]$ denote the $N$-th colored Jones polynomial of $K$, normalized so that $J_N^{unknot}(q) = (q^N-q^{-N})/(q-q^{-1})$. Then
\be \label{NRes} {\rm Res}_{\alpha=N-1} N_K^r(\alpha) = \frac{i^{1-r}}{\pi}\sin\Big(\frac{\pi}{r}\Big) (-1)^{N+f(N-1)} J_N(\rtu)\,. \ee
\end{prop}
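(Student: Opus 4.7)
The plan is to split $N_K^r(\alpha)=\md(V_\alpha)\langle T_\alpha\rangle$ into a factor with a simple pole at $\alpha=N-1$ and a factor that is regular there, then identify the regular factor with a colored Jones value via the non-split extension of modules.

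First, I would read off the pole of $\md(V_\alpha)$ from the closed form in \eqref{E:modified-dimension},
\[
\md(V_\alpha) = -\rtu^{r(1-r)/2}\,\frac{\rtu^{\alpha+1}-\rtu^{-(\alpha+1)}}{\rtu^{r\alpha}-\rtu^{-r\alpha}}.
\]
The denominator equals $2i\sin(\pi\alpha)$, with simple zeros at every integer, while the numerator vanishes precisely when $\alpha+1\in r\Z$. So at $\alpha=N-1$ with $N\notin r\Z$ the pole is simple, and differentiating the denominator at $\alpha=N-1$ gives $2i\pi(-1)^{N-1}$. A short computation then yields
\[
\mathrm{Res}_{\alpha=N-1}\md(V_\alpha) \;=\; \frac{i^{1-r}(-1)^{N}}{\pi}\sin\!\Big(\tfrac{\pi N}{r}\Big).
\]

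Second, I would show $\langle T_\alpha\rangle$ is regular at $\alpha=N-1$ and identify its value with a reduced colored Jones. Regularity follows from the diagrammatic reconstruction of Section~\ref{sec:GD}: aside from the factor in $\md$, the only denominators appearing in the building blocks \eqref{E:RmatrixAction}, \eqref{E:RInvMatrixAction}, \eqref{E:genericCupCap} are $q$-Pochhammer symbols that divide their companion numerators at $q=\rtu$, and no factor of $\rtu^{r\alpha}-\rtu^{-r\alpha}$ occurs. To identify $\langle T_{N-1}\rangle$, I would invoke the non-split short exact sequence
\[
0\to S_{N-1-2N}\to V_{N-1}\to S_{N-1}\to 0
\]
together with the naturality of the Reshetikhin--Turaev construction under $\Ubar$-module homomorphisms. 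Because the truncated $R$-matrix \eqref{eq:R-matrix}, the dualities of Section~\ref{SS:RibbonStructure}, and the ribbon element are defined by universal elements acting on arbitrary weight modules, the endomorphism $F(T_{N-1})\in\End_\cat(V_{N-1})$ descends along $V_{N-1}\twoheadrightarrow S_{N-1}$ to $F(T^{S_{N-1}})$, the tangle operator computed with $S_{N-1}$-coloring. Since $S_{N-1}$ is simple, $F(T^{S_{N-1}})=\langle T^{S_{N-1}}\rangle\,\Id_{S_{N-1}}$, so $\langle T_{N-1}\rangle=\langle T^{S_{N-1}}\rangle$. The normalization $J_N^{unknot}(q)=(q^N-q^{-N})/(q-q^{-1})$ translates, for a knot of framing $f$, to $J_N^K(\rtu)=(-1)^{f(N-1)}\,\qdim_\cat(S_{N-1})\,\langle T^{S_{N-1}}\rangle$, with the sign accounting for the difference between the blackboard framing implicit in the diagrammatic calculus and the chosen framing $f$, and with $\qdim_\cat(S_{N-1})=\sin(\pi N/r)/\sin(\pi/r)$ computed directly from the pivotal pairing in \eqref{E:genericCupCap}.

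Multiplying the two factors, the $\sin(\pi N/r)$'s cancel and yield \eqref{NRes}. The main obstacle is the sign bookkeeping in Step~2: one must carefully track how the pivotal element $K^{1-r}$ used in $\cat$ interacts with the standard semisimple quantum-$\mathfrak{sl}_2$ pivotal structure that underlies the usual definition of $J_N^K$, and how the ribbon element on $V_\alpha$ restricts to $S_{N-1}$; this is the content that produces the precise sign $(-1)^{N+f(N-1)}$ and is where the bulk of the argument in \cite[Cor.~15]{CGP-relations} is concentrated.
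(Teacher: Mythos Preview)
The paper does not actually prove this proposition; it is quoted directly from \cite[Cor.~15]{CGP-relations}, with only a remark afterward explaining why the prefactors and signs differ from those in the original reference. So there is no ``paper's own proof'' to compare against.

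Your outline is the standard argument and is essentially what underlies the result in \cite{CGP-relations}: split $N_K^r(\alpha)=\md(V_\alpha)\langle T_\alpha\rangle$, compute the residue of $\md(V_\alpha)$ from \eqref{E:modified-dimension}, note that $\langle T_\alpha\rangle$ is regular (indeed polynomial in $\rtu^\alpha$) and hence equals a scalar multiple of the identity even at the reducible point $\alpha=N-1$, then use the quotient $V_{N-1}\twoheadrightarrow S_{N-1}$ and naturality of $F$ to identify that scalar with the reduced $\cat$-valued tangle invariant for $S_{N-1}$. Your residue computation for $\md$ is correct.

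Two small corrections in Step~2, both in the sign bookkeeping you flag as delicate. First, $\qdim_\cat(S_{N-1})$ computed with the pivotal element $K^{1-r}$ picks up a sign: since $\rtu^{1-r}=-\rtu$, one finds $\qdim_\cat(S_{N-1})=(-1)^{N-1}\sin(\pi N/r)/\sin(\pi/r)$, not $\sin(\pi N/r)/\sin(\pi/r)$. Second, the factor $(-1)^{f(N-1)}$ does not arise from a ``blackboard versus chosen framing'' discrepancy --- both invariants are computed for the same framed knot $K$. It comes instead from the fact that the ribbon element (twist) in $\cat$ differs by a sign from the standard one used to define $J_N^K$; this is exactly what the paper's remark after the proposition says. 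The combination of these two sign corrections with the $(-1)^N$ from the residue of $\md$ is what produces the full $(-1)^{N+f(N-1)}$ in \eqref{NRes}. Once those signs are tracked correctly, your argument goes through.
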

 
\noindent Note that the prefactors on the RHS differ slightly from those in \cite{CGP-relations}. The $r$-dependent prefactors differ due to a different normalization for the modified dimension $\md[V_\alpha]$. The extra $(-1)^{N+f(N-1)}$ appear because the pivotal structure (and ribbon element) in the category $\Ubar(\mathfrak{sl}_2)$-mod discussed above --- the \emph{only} pivotal structure that exists for generic $\alpha\in \C$ --- differ from the pivotal structure (and ribbon element) used in the standard definitions of the colored Jones polynomial.

\medskip
\noindent\textbf{Remark.}
At $\alpha=N-1$ with $N\in r\Z$, the ADO invariant does not have a pole, and may simply be evaluated. In particular, it was shown some time ago by J. Murakami and H. Murakami \cite{MurakamiMurakami}  that $N_K^r(r-1)$  coincides with the renormalized Jones polynomial $\hat J_r(\zeta_{2r})$, where $\hat J_N(q) = \frac{q-q^{-1}}{q^N-q^{-N}}J_N(q)$, as well as with the Kashaev invariant \cite{Kashaev}. This observation allowed Kashaev's famous volume conjecture to be reformulated in terms of colored Jones polynomials.


\section{The diagram invariant is $q$-holonomic}
\label{sec:preADO}

Our next goal is to prove that the ``diagram invariant'' $G_{\mathbb D}$ defined in Section \ref{sec:GD} is $q$-holonomic. Specifically, for an $n$-component tangle, we show that $G_{\mathbb D}$ generates a $q$-holonomic module for a $q$-Weyl algebra with $n+1$ pairs of generators: $(x_i,y_i)_{i=1}^n$ acting as multiplication and $q$-shifts of the variables $x_i$ in $G_{\mathbb D}$, as well has $(\hat x,\hat y)$ acting as multiplication by $q^r$ and shift $r\mapsto r+1$.

Proving that $G_{\mathbb D}$ is $q$-holonomic in this sense is a straightforward generalization of the classic results of Garoufalidis and L\^{e} \cite{GL-J} on the Jones polynomial. We just adapt the methods there to the functional spaces to which $G_{\mathbb D}$ belongs. The main reason this is straightforward is that all the nice closure properties of $q$-holonomic functions under addition, multiplication, multisums, etc. are consequences of universal algebraic features of $q$-holonomic modules --- and in particular are completely independent of the actual functional spaces on which $q$-Weyl algebras are represented.

To maintain a reasonably self-contained and pedagogical exposition, we will review basic definitions and examples of $q$-holonomic modules in Section \ref{sec:qreview}, largely following the classic work of Sabbah \cite{Sabbah}, which in turn was based on work of Bernstein \cite{Bernstein}, Sato, Kashiwara, and others on D-modules.
(Other good references include the classic \cite{zeilberger1990holonomic, wilf1992algorithmic}, as well as the more recent survey \cite{GL-survey}. In modern days there are powerful derived methods available to study generalizations of $q$-Weyl modules and functors among them, such as \cite{KashiwaraSchapira-DQ}; but we will not require these methods.) In the process we will introduce the functional spaces $\mathcal V_{n,m}$ relevant for $G_{\mathbb D}$.

In Section \ref{sec:closure} we explain how standard closure properties of $q$-holonomic modules apply to $G_{\mathbb D}$. Then in Section \ref{sec:GDhol} we emulate \cite{GL-J} to prove that $G_{\mathbb D}$ is $q$-holonomic --- by verifying that all the building blocks of Section \ref{sec:GD} are $q$-holonomic and that their composition to form $G_{\mathbb D}$ preserves this property.

\subsection{$q$-holonomic modules and functions}
\label{sec:qreview}

\subsubsection{Basic definitions}

Let $k=\C(q)$ denote the field of fractions in a formal variable $q$. Recall the $q$-Weyl algebras in $n$ pairs of variables 
\be \begin{array}{r@{\,}c@{\,}l}
\W_n &=& k[x_1,...,x_n,y_1,...,y_n]/\text{rel}_q \\[.1cm]
\E_n &=&  k[x_1^{\pm 1},...,x_n^{\pm 1},y_1^{\pm1 },...,y_n^{\pm 1}]/\text{rel}_q^\pm
\end{array}
\ee
Namely, these consist of polynomials (resp. Laurent polynomials) in $2n$ non-commutative formal variables $x_i,y_i$ (resp. $x_i^\pm,y_i^\pm$), subject to the relations
\be \label{relq} \qquad  \text{rel}_q:\;\; \begin{array}{c}y_ix_j = q^{\delta_{ij}} x_j y_j \\[.1cm]
 x_ix_j = x_j x_i \\ [.1cm]
 y_iy_j = y_j y_i \end{array}
\qquad\quad  \text{rel}_q^\pm:\;\; \begin{array}{c}
y_i^{\varepsilon} x_j^{{\varepsilon'}} = q^{\varepsilon{\varepsilon'}\delta_{ij}} x_j^{{\varepsilon'}} y_j^{\varepsilon} \\ [.1cm]
x_i^\varepsilon x_j^{\varepsilon'} = x_j^{\varepsilon'} x_i^\varepsilon \\ [.1cm]
y_i^\varepsilon y_j^{\varepsilon'} = y_j^{\varepsilon'} y_i^\varepsilon
\end{array}
 \quad(\varepsilon,{\varepsilon'}\in \{\pm 1\})\ee
as well as the implicit relations $x_ix_i^{-1}=y_iy_i^{-1}=1$.

Both algebras have a notion of a $q$-holonomic module, though their respective definitions differ some.

The notion of $q$-holonomic $\mathbb{W}_n$ modules is based on homological dimension, which quantifies the quasi-periodicity of the module elements under the action of $\mathbb{W}_n$.  
Note that the algebra $\W_n$ has a non-negative ascending filtration $\CF_\bullet \W_n$ given by total degree in $x$ and $y$,
\be \CF_i \W_n = k\langle x^ay^b\;\text{s.t.}\; |a|+|b|\leq i \rangle\,, \ee
where we write $|a|=\sum_j a_j$ for a multi-index $a=(a_1,...,a_n)$. This is often referred to as the ``Bernstein Filtration." Given a left $\W_n$-module $M$, an ascending filtration $\CF_\bullet M$ is called a ``good filtration'' if the associated Rees module is a finitely generated module for the Rees algebra of $\W_n$; in particular, this implies that the filtrations on $\W_n$ and $M$ are compatible (\emph{i.e.} $\CF_i\W_n \cdot \CF_j M \subseteq \CF_{i+j} M$), and that each $\CF_i M$ is finite-dimensional. 

Remarkably, for every good filtration there exists a (necessarily unique) polynomial $p$, the ``Hilbert polynomial'', such that $\text{dim}_k \CF_i M = p(i)$ for $i \gg 0$. Moreover, the degree of this polynomial, denoted  $d(M)$ and called the \emph{homological dimension} of $M$, is independent of the choice of good filtration. In other words, $d(M)$ is the polynomial order of growth of the filtered components of any good filtration.

The $q$-analogue of Bernstein's inequality guarantees that if $M$ is finitely generated and has no monomial torsion%
\footnote{Given a left $\W_n$-module $M$, its \emph{monomial torsion} $\text{mtor}(M) \subseteq M$ is the subspace consisting of $v\in M$ such that $x^a y^b\,v=0$ for some monomial $x^ay^b := x_1^{a_1}\cdots x_n^{a_n}y_1^{a_1}\cdots y_n^{a_n}\in \W_n$.} %
then $d(M)\geq n$. We are interested in the case when the homological dimension is as small as possible.

\begin{definition} \label{def:W-hol} A left $\W_n$-module $M$ is called \emph{$q$-holonomic} if it is finitely generated, has no monomial torsion, and either $M=0$ or $d(M)=n$.
\end{definition}

Since elements of $\E_n$ may have arbitrarily large negative degree, the components of the Bernstein filtration will be infinite dimensional. Being $q$-holonomic is instead defined in terms of homological codimension. Given a left $\E_n$-module $M$, its \emph{homological codimension} $c(M)$ is the smallest integer $i$ such that $\text{Ext}_{\E_n}^i(M,\E_n)\neq0$.

\begin{definition} \label{def:E-hol}
A left $\E_n$-module $M$ is called $q$-holonomic if it is finitely generated and $M=0$ or $c(M)=n$.
\end{definition}

Results in \cite[Sec. 2]{Sabbah} show that for any finitely-generated $\E_n$-module $M$ one has $c(M)\leq n$ (an analogue of Bernstein's inequality), and that $M$ is in fact $q$-holonomic if and only if $\text{Ext}_{\E_n}^i(M,\E_n)=0$ for all $i\neq n$.

There is a close relationship between $\E_n$ and $\W_n$ modules. First, $\E_n$ has a natural right $\W_n$-module structure, which provides a map from (left) $\W_n$ modules to $\E_n$ modules, {\it i.e.}
\be \label{WE-induce}  \begin{array}{ccc}
   \W_n\text{-mod} & \to & \E_n\text{-mod} \\[.1cm]
   M &\mapsto & \E_n\otimes_{\W_n} M\,. \end{array}
\ee
Note that the kernel of this map consists precisely of $\W_n$-modules with monomial torsion. Conversely, any finitely-generated $\E_n$-module $M$ can be written as $M = \E_n \otimes_{\W_n} N$ for some $N$ (just take $N$ to be the $\W_n$-span of the generators of $M$).
A simple result of \cite[Sec. 2]{Sabbah} is

\begin{prop} \label{prop:EW} A left $\E_n$-module $M$ is $q$-holonomic if and only if there exists a $q$-holonomic left $\W_n$ module $N$ with $M = \E_n\otimes_{W_n}N$.
\end{prop}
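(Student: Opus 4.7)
The plan is to exploit that $\E_n$ arises from $\W_n$ by Ore localization at the multiplicative set $S=\{x^ay^b : a,b\in\Z_{\geq 0}^n\}$ of monomials, so that $\E_n$ is flat as a left and right $\W_n$-module. Consequently the functor $\E_n\otimes_{\W_n}(-)$ is exact, and for $N$ a finitely generated (hence finitely presented, by Noetherianity of $\W_n$) $\W_n$-module there is a natural flat base-change isomorphism
\[
\operatorname{Ext}^i_{\E_n}(\E_n\otimes_{\W_n}N,\,\E_n)\;\cong\;\E_n\otimes_{\W_n}\operatorname{Ext}^i_{\W_n}(N,\W_n)\qquad (i\geq 0).
\]
This flat base-change is the workhorse for both directions.

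For the $(\Leftarrow)$ direction, I will assume $N$ is a $q$-holonomic $\W_n$-module, the case $N=0$ being trivial. The no-monomial-torsion hypothesis on $N$ makes the canonical map $N\to\E_n\otimes_{\W_n}N=:M$ injective, so $M\neq 0$ and $M$ is visibly finitely generated over $\E_n$. Invoking the Auslander-regularity/grade-duality package for $\W_n$ developed in \cite[Sec.~2]{Sabbah}, the holonomic hypothesis $d(N)=n$ forces $\operatorname{Ext}^i_{\W_n}(N,\W_n)=0$ for all $i<n$. The base-change isomorphism then delivers $\operatorname{Ext}^i_{\E_n}(M,\E_n)=0$ for $i<n$, i.e. $c(M)\geq n$. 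Combined with the $\E_n$-Bernstein inequality $c(M)\leq n$ recalled above, this pins $c(M)=n$, so $M$ is $q$-holonomic.

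For the $(\Rightarrow)$ direction, I will start from a $q$-holonomic $\E_n$-module $M\neq 0$, pick a finite generating set $m_1,\ldots,m_k$ of $M$ over $\E_n$, and take $N:=\sum_{i=1}^k\W_n\cdot m_i\subseteq M$ as the candidate. Three quick checks show that $N$ is finitely generated over $\W_n$ by construction; that $N$ has no monomial torsion, since every element of $S$ acts invertibly on the ambient $\E_n$-module $M$ and $N$ embeds into $M$; and that the natural map $\E_n\otimes_{\W_n}N\to M$, realized as the Ore localization $S^{-1}N\to M$, is surjective by choice of generators and injective because $s^{-1}n\mapsto 0$ in $M$ forces $n=0$ in $M$ and hence in $N$. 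It remains to prove $d(N)=n$: the inequality $d(N)\geq n$ is Bernstein, and for the reverse I will reuse the base-change isomorphism to observe that $\operatorname{Ext}^i_{\W_n}(N,\W_n)$ is $S$-torsion for every $i\neq n$, since its localization vanishes by the Ext characterization of $\E_n$-holonomicity. Sabbah's grade duality for the Auslander-regular algebra $\W_n$ then converts this $S$-torsion statement into the dimension bound $d(N)\leq n$.

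The main obstacle is this final dimension bound in the $(\Rightarrow)$ direction: the localization $S^{-1}$ can annihilate nonzero modules, so the vanishing of $\operatorname{Ext}^i_{\E_n}(M,\E_n)$ for $i\neq n$ does not by itself imply $\operatorname{Ext}^i_{\W_n}(N,\W_n)=0$, only that the latter is $S$-torsion. Bridging this gap requires the full Auslander/grade-duality formalism for the $q$-Weyl algebra $\W_n$ developed in Section~2 of \cite{Sabbah}, rather than a naive Hilbert-polynomial computation on the good filtration $\CF_i\W_n\cdot\operatorname{span}(m_1,\ldots,m_k)$.
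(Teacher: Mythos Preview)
The paper does not actually prove this proposition: it is stated immediately after the sentence ``A simple result of \cite[Sec.~2]{Sabbah} is'' and no argument is supplied. So there is nothing in the paper to compare your proof against beyond that citation.

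Your sketch is substantially more detailed than what the paper offers, and your framework---treating $\E_n$ as the Ore localization $S^{-1}\W_n$ at the monomial set, exploiting flatness to get the base-change isomorphism $\operatorname{Ext}^i_{\E_n}(\E_n\otimes_{\W_n}N,\E_n)\cong \E_n\otimes_{\W_n}\operatorname{Ext}^i_{\W_n}(N,\W_n)$, and then translating between the Hilbert-polynomial definition of $d(N)$ over $\W_n$ and the Ext-grade definition of $c(M)$ over $\E_n$---is exactly the architecture Sabbah uses. The $(\Leftarrow)$ direction is clean as you wrote it.

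The gap you flag in $(\Rightarrow)$ is genuine and you are right to be cautious: knowing only that $\operatorname{Ext}^i_{\W_n}(N,\W_n)$ is $S$-torsion for $i<n$ does not by itself force it to vanish, so you cannot read off $j(N)\geq n$ directly. In Sabbah's treatment this is closed using the Auslander condition for $\W_n$ (every nonzero $\operatorname{Ext}^i$ has grade at least $i$) together with the fact that a finitely generated $S$-torsion $\W_n$-module has grade at least $n$; combining these with the spectral sequence / bidualizing argument yields $j(N)\geq n$ and hence $d(N)\leq n$. Your instinct to defer that step to \cite[Sec.~2]{Sabbah} is therefore correct, and matches precisely what the paper itself does by citing Sabbah wholesale.
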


\subsubsection{Cyclic modules}

We will mainly be interested in cyclic modules, \emph{i.e.} modules of the form $M = \E_n v$ or $N=\W_n v$ generated by a single element $v$. In the case of $\W_n$-modules, a useful observation is that every cyclic module has a canonical good filtration, given by
\be \CF_i N := (\CF_i  \W_n) v\,. \label{F-can} \ee
In the case of $\E_n$-modules, another structural result of \cite[Sec. 2]{Sabbah} shows that
\begin{prop} \label{prop:cyclic}
Every $q$-holonomic $\E_n$-module is cyclic.
\end{prop}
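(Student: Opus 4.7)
The plan is to prove cyclicity by induction on the length of $M$ as an $\E_n$-module, after first establishing that every q-holonomic $\E_n$-module has finite length. This parallels the classical argument showing that holonomic $D$-modules over the Weyl algebra are cyclic.

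First I would show that q-holonomicity is preserved under submodules and quotients. Given a short exact sequence $0 \to M' \to M \to M'' \to 0$, the long exact sequence for $\operatorname{Ext}^\bullet_{\E_n}(-,\E_n)$, combined with the vanishing $\operatorname{Ext}^i_{\E_n}(M,\E_n) = 0$ for $i\neq n$ (the stronger form of $c(M) = n$ quoted from \cite[Sec.~2]{Sabbah}), forces $M'$ and $M''$ to be q-holonomic or zero whenever $M$ is. Consequently every descending chain of submodules of a q-holonomic $M$ consists of q-holonomic modules. Since $\operatorname{Ext}^n_{\E_n}(-,\E_n)$ restricts to a duality on the q-holonomic subcategory, such a chain becomes an ascending chain on the dual side, which must terminate by Noetherianity of $\E_n$; hence $M$ has finite length.

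Next I would induct on $\ell(M)$. The base case $\ell(M) = 1$ is immediate since $M$ is then simple. For $\ell(M) \geq 2$, pick a simple submodule $N = \E_n w \subseteq M$; then $M/N$ is q-holonomic of strictly smaller length, hence cyclic by induction, generated by $\bar v = v + N$ for some $v \in M$. I would then attempt to produce a single generator of $M$ in Stafford style: show that $v + Pw$ generates $M$ for a suitable $P \in \E_n$. Any such element projects to $\bar v$ and so automatically generates $M/N$, reducing the task to finding $P$ with $N \subseteq \E_n(v + Pw)$.

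The hard part will be the final step, a ``genericness'' statement for $P$. One must show that the set of bad $P$ — those for which $\E_n(v + Pw) \cap N$ is a proper submodule of $N$, and hence zero by simplicity of $N$ — does not exhaust $\E_n$. This is where the structure of $\E_n$ as a simple Noetherian Ore domain enters: the argument adapts Stafford's classical proof over the Weyl algebra to the quantum setting, using the Ore property to clear denominators when producing a $P$ outside the bad locus. This technical core is carried out in \cite[Sec.~2]{Sabbah}. Once cyclicity is established, the statement $M \cong \E_n / I$ for some left ideal $I \subseteq \E_n$ follows immediately, matching the subsequent use of the proposition in the paper.
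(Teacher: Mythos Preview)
The paper does not give its own proof of this proposition: it simply records the statement as ``another structural result of \cite[Sec.~2]{Sabbah}'' and moves on. Your outline is a faithful sketch of the argument Sabbah actually gives there, so in that sense you and the paper agree --- you have just unpacked the citation.

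As a self-contained write-up, your proposal is on the right track but leaves the genuinely nontrivial step as a black box. The finite-length argument and the induction on length are standard and correctly stated. The crux, as you note, is the Stafford-type step: showing that for some $P\in\E_n$ the element $v+Pw$ generates all of $M$. Your description of this step is accurate in spirit but not yet a proof; you invoke ``the Ore property to clear denominators'' and then defer to Sabbah. If the intent is to reproduce the paper's treatment, this is fine, since the paper itself defers to Sabbah. If the intent is to actually supply a proof, you would need to carry out the adaptation of Stafford's argument to the $q$-Weyl setting explicitly (or cite a precise statement, e.g.\ Sabbah's Lemme~2.1.10 and Corollaire~2.1.11), since the passage from the classical Weyl algebra to $\E_n$ is not entirely automatic.
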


We recall that any cyclic module may be written in the form
\be M = \E_n \big/ \text{Ann}_{\E_n}(v) \qquad \text{or}\qquad  N = \W_n \big/ \text{Ann}_{\W_n}(v)\,, \ee
where the \emph{annihilator ideal} $\text{Ann}_A(v) =\{a\in A\;\text{s.t.}\; a v =0\}$ is the left ideal in the algebra $A=\E_n$ or $\W_n$ consisting of elements that kill the generator.

For a cyclic module $M$, being $q$-holonomic roughly implies that the annihilator ideal has at least $n$ independent generators. This can be made precise by introducing the characteristic variety $\text{char}M\in (\C^*)^{2n}$; by (\emph{e.g.}) Prop. 7.1.9 of \cite{KashiwaraSchapira-DQ}, $M$ is $q$-holonomic if and only if $\dim (\text{char}M) = n$. The corresponding statement for D-modules is a classic result in the theory, \emph{cf.} \cite{Kashiwara-Bfunctions}. A weaker, specialized result, which is sufficient for all the examples we need to consider in this paper, is the following:
\begin{lemma} \label{lemma:hol-y}
Let $M=\E_n v$ be a cyclic $\E_n$-module whose annihilator ideal contains elements of the form $p_j(x)y_j{}^{d_j}+q_j(x)$ for each $j=1,...,n$, with $p_j(x),q_j(x)\in \C(q)[x_1,...,x_n]$, $p_j,q_j\neq 0$. Then $M$ is $q$-holonomic.
\end{lemma}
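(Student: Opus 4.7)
\textbf{Proof plan for Lemma \ref{lemma:hol-y}.} The plan is to establish $q$-holonomicity via a finite generic rank computation, reducing from $\E_n$ to $\W_n$ by Proposition \ref{prop:EW}.

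First, I would pass to the $\W_n$-submodule $N := \W_n v \subseteq M$. Since $v$ generates $M$ over $\E_n$, one has $M = \E_n \otimes_{\W_n} N$, and by Proposition \ref{prop:EW} the problem reduces to showing that $N$ is $q$-holonomic as a $\W_n$-module. To this end, set $K := \C(q)(x_1, \ldots, x_n)$ and consider the localization $N_K := K \otimes_{\C(q)[x]} N$, on which the $y_j$'s still act since localization at nonzero elements of $\C(q)[x]$ is Ore (the relations $y_j f(x) = f(q_j x) y_j$ preserve nonvanishing). In $N_K$, each given relation $p_j(x) y_j^{d_j} v + q_j(x) v = 0$ solves as $y_j^{d_j} v = -p_j(x)^{-1} q_j(x) v$, since $p_j(x)$ is invertible in $K$. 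For any $b \in \Z_{\geq 0}^n$ with some $b_j \geq d_j$, commuting $y_j^{d_j}$ freely past the other $y_k$'s and past polynomial coefficients (the latter merely shifts arguments by powers of $q$) expresses $y^b v$ as a $K$-multiple of $y^{b - d_j e_j} v$. Iterating, $N_K$ is spanned as a $K$-vector space by the finite set $\{y^{b'} v : 0 \leq b'_j < d_j\;\text{for all}\;j\}$, so $\dim_K N_K \leq \prod_j d_j < \infty$.

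The final step invokes the standard fact --- the $q$-analog of the corresponding characterization for holonomic D-modules; compare \cite[Sec.~1]{Sabbah} and the methodology of \cite{GL-J} --- that any finitely generated $\W_n$-module of finite generic rank over $K$ is $q$-holonomic. Granting this, $N$ is $q$-holonomic and hence so is $M$. The main subtle point is precisely this last step. The naive leading symbols $\hat p_j(x) y_j^{d_j}$ of the given relations alone, even in a weighted Bernstein-type filtration giving extra weight to the $y$'s, need not cut out a characteristic variety of dimension $\leq n$: for example when the polynomials $\hat p_j$ share a common zero locus of high dimension, the symbol variety in $\mathrm{Spec}\,\mathrm{gr}(\W_n) = \C^{2n}$ can have dimension as large as $2n-1$. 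A complete dimension bound instead requires incorporating additional annihilating elements in $\mathrm{Ann}_{\W_n}(v)$ derived from commutators such as $[y_k, p_j(x) y_j^{d_j} + q_j(x)]$ --- equivalently, from choosing a $K$-basis of $N_K$ and clearing denominators to produce, for each $j$, a relation of the form $P_j(x) y_j^{D_j} + (\text{lower order in } y_j)$ with $P_j \neq 0$ --- which is the content of the generic-rank characterization.
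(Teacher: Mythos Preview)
Your reduction to the $\W_n$-submodule $N=\W_n v$ and the observation that $N_K=\C(q)(x)\otimes_{\C(q)[x]}N$ is finite-dimensional over $K$ are both correct and natural. The gap is exactly where you flag it: the implication ``finitely generated with finite generic rank $\Rightarrow$ $q$-holonomic'' is not a result you can simply cite from \cite{Sabbah} or \cite{GL-J}. For $\W_n$-modules with merely no \emph{monomial} torsion this implication is not obvious; one would need the stronger input that $N$ has no $k[x]$-torsion whatsoever (which does follow here, since $N$ sits inside an $\E_n$-module and $\E_n/\E_n\,p(x)=0$ for every nonzero $p\in k[x^\pm]$, but this already requires a separate argument), and then a localization/pushforward argument to embed $N$ into something holonomic. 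Your final paragraph gestures toward producing the extra relations via commutators, but carrying that out is precisely the work the lemma demands---you have relocated the difficulty rather than resolved it.

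The paper instead gives a direct, self-contained dimension count on the Bernstein filtration. Using the specific shape $p_j(x)y_j^{d_j}+q_j(x)$ of the relations, it argues that in the spanning set $\{x^ay^bv:|a|+|b|\le i\}$ for $\CF_iN$, whenever $b_j\ge d_j$ one may reduce the $x_j$-degree below $c_j:=\deg_{x_j}p_j$; a straightforward count of the surviving monomials then gives $\dim\CF_iN\le C\,i^n$. This is elementary and requires no structural theorems about holonomic modules. Your route could in principle be completed, and would yield a more conceptual proof, but as written it leaves the essential step as a black box; the paper's argument is the more efficient one for this particular lemma.
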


\noindent\emph{Proof.}
We will prove that the associated $\W_n$-module $N=\W_n v$ is $q$-holonomic, by showing that the dimensions of the filtered components $\CF_iN = (\CF_i\W_n)v$ obey $\dim \CF_iN \leq C i^n$ for some fixed constant $C$. Then it follows from Prop. \ref{prop:EW} that $M=\E_nv$ is $q$-holonomic.

Choose any $i\geq \max\{n,d_1,...,d_n\}$. The filtered component $\CF_iN$ is certainly spanned by all the monomials $x^ay^bv:= x_1^{a_1}...x_n^{a_n}y_1^{b_1}...y_n^{b_n}v$ with $|a|+|b|\leq i$. However, the relations
\be (p_j(x)y_j{}^{d_j}+q_i(x))v=0\,,\qquad  j=1,...,n \label{relpq} \ee
make some of these monomials redundant, and reduce the dimension. Let $c_j = \deg_{x_j}p_j(x)$. Then, for any $j$, we observe that if $x^ay^bv$ is divisible by $y_j^{d_j}$, the relations \eqref{relpq} imply that it is sufficient to consider $x^a$ such that $\deg_{x_j}x^a < c_j$. In other words, $\CF_i N$ is spanned by
\be x^ay^b v\quad\text{s.t.}\quad a_j,b_j\geq 0\;\forall\,j\,,\quad  |a|+|b|\leq i\,,\quad\text{and}\quad \forall\,j,\; b_j\geq d_j\;\Rightarrow\; a_j< c_j\,. \label{mon-count} \ee

We seek an upper bound for dimension of this space of monomials. To simplify things, let $\bar d = \text{max}\{d_1,...,d_n\}$ and $\bar c = \max\{c_1,...,c_n\}$. For each $0\leq m \leq n$, let
\begin{equation*}
S_m := \left\{ (a,b) \left| \begin{aligned}
&a_j,b_j\leq i\;\;\forall\,j, \\&\text{$b_j\geq \bar d$ for exactly $m$ values of $j$,} \\
&\text{and}\;\; b_j \geq \bar d\;\Rightarrow\; a_j < \bar c
\end{aligned}\right.\right\}
\end{equation*}
Then $\bigcup_{m=0}^n S_m$ contains the set of $(a,b)$ satisfying \eqref{mon-count}, and it is straightforward to count
\begin{align} 
 |S_m| = \begin{pmatrix} n \\m \end{pmatrix} \bar d^{n-m}(i-\bar d+1)^m (i+1)^{n-m}\bar c^m \leq C_m i^n
\end{align}
for some constants $C_m$ (depending on $m,n,\bar c,\bar d$). Thus $\text{dim}\CF_i N \leq \sum_{m=0}^n |S_m| \leq \big(  \sum_{m=0}^n C_m\big) i^n$.
\phantom{x}\hfill $\square$

\subsubsection{The function spaces $\mathcal V_{n,m}$}

The cyclic modules relevant to this work arise from a particular representation of the $\W$ and $\E$ algebras. For any two non-negative integers $m$ and $n$, we define
\be \mathcal V_{n,m} = \{\text{functions}:\Z^m\to \mathbb V_n\}\,, \ee 
where $\mathbb V_n$ is the field of rational functions in $q^{\frac12},\{x_i^{\frac12}\}_{i=1}^n,\{z_{ij}\}_{i,j=1}^n$ as in \eqref{defVn}. We will think of $\mathcal V_{n,m}$ as a vector space over $k=\C(q)$.

The space $\mathcal V_{n,m}$ has a left action of $\E_{n+m}$ (and hence of its subalgebra $\W_{n+m}$) defined as follows.
Let us relabel the last $m$ pairs of generators of of $\E_{n+m}$ as $x_i,y_i \rightsquigarrow \hat x_{i-n},\hat y_{i-n}$  ($i>n$); thus
\be \E_{n+m} = k[x_1^\pm,y_1^\pm,...,x_n^\pm,y_n^\pm,\hat x_1^\pm,\hat y_1^\pm,...,\hat x_m^\pm,\hat y_m^\pm] /(\text{usual $q$-comm. rels.}) 
\,.\ee
The last $m$ pairs of generators have a familiar action 
\be \begin{array}{l} (\hat x_i^\pm \cdot f)(a_1,...,a_m) = q^{\pm a_i} f(a_1,...,a_m) \\[.1cm]
  (\hat y_i^\pm \cdot f)(a_1,...,a_m) = f(a_1,...,a_i\pm 1,...,a_m)\,. \end{array} \ee
The first $n$ pairs of generators have an action induced from that of $\E_n$ on the domain $\mathbb V_n$, which is given by
\be \label{E-Vn} \begin{array}{l} x_i^\pm :\,\text{multiplication by $x_i^\pm$} \\[.2cm]
 y_i^\pm\,: \big(x_j^{\frac12},z_{j\ell}\big) \mapsto \big(q^{\pm\frac12 \delta_{ij}} x_j^{\frac12}, q^{\frac12\delta_{ij}\delta_{i\ell}} x_j^{\pm\frac12\delta_{i\ell}}x_\ell^{\pm\frac12\delta_{ij}}z_{j\ell}\big)\,.
 \end{array} \ee
Explicitly, the induced action on $f\in \mathcal V_{n,m}$ is
\be \begin{array}{l} (x_i^\pm \cdot f)(a_1,...,a_m) \mapsto x_i^\pm  f(a_1,...,a_m)\,,\\[.1cm]
 (y_i^\pm \cdot f)(a_1,...,a_m) \mapsto  f(a_1,...,a_m)\Big|\raisebox{-.2cm}{$x_i\to q^\pm x_i\,,z_{ij}\to x_j^{\pm\frac12}z_{ij}\,(j\neq i),z_{ii}\to q^{\frac12}x_i^\pm z_{ii}$}\,. \end{array} \ee
It is straightforward to check that the $q$-commutation relations of $\E_{n+m}$ are respected by these combined actions.%
\footnote{\label{foot:alpha}A more intuitive way to understand the action of $\E_{n+m}$ on $\mathcal V_{n,m}$ is to fix $q$ to be a generic complex number and to set $x_i=q^{\alpha_i}$ and $z_{ij} =q^{\frac12 \alpha_i\alpha_j}$ for $\alpha_i\in\C$. Then 
the generators $x_i$ (resp. $\hat x_i$) of $\E_{n+m}$ act as multiplication by $q^{\alpha_i}$ (resp. $q^{a_i}$) and the generators $y_i$ (resp. $\hat y_i$) act by shifting $\alpha_i\mapsto \alpha_i+1$ (resp. $a_i\mapsto a_i+1$).
In particular, the somewhat awkward transformation of $z_{j\ell}$ in \eqref{E-Vn} is just that induced from a shift in $\alpha_i$.

Unfortunately, we will need to keep $q$ a formal algebraic variable (we cannot set it to a generic complex number) in order to gain control over the specialization to the ADO invariant later on.
}

Any function $f\in \mathcal V_{n,m}$ now generates a cyclic module for $\E_{n+m}$, denoted
\be M_f = \E_{n+m} f = \E_{n+m}\big/ \text{Ann}_{\E_{n+m}}(f)\,, \ee
Each $M_f$ is a submodule of the corresponding $\mathcal V_{n,m}$.
\begin{definition} \label{def:f-hol} We say that \emph{the function $f$ is $q$-holonomic} if the corresponding $\E_{n+m}$-module $M_f$ is $q$-holonomic.
\end{definition}

Similarly, $f$ generates a cyclic $\W_{n+m}$-module $N_f = \W_{n+m} f = \W_{n+m}\big/ \text{Ann}_{\W_{n+m}}(f)$. Note that such a $\W_{n+m}$-module can never have monomial torsion, because the $\W_{n+m}$ action on $\mathcal V_{n,m}$ extends to an $\E_{n+m}$ action, for which the generators $x_i,y_i,\hat x_i,\hat y_i$ are invertible.
By Prop.~\ref{prop:EW}, if $N_f$ is a $q$-holonomic $\W_{n+m}$-module, then $M_f$ is a $q$-holonomic $\E_{n+m}$-module.

\subsubsection{Examples}

We list some classic examples of $q$-holonomic functions $f\in \CV_{n,m}$ which will be useful in proving the $q$-holonomicity of $G_\mathbb{D}$. \medskip

\paragraph*{Constant and delta functions.}

The constant function $f(a_1,...,a_n) \equiv 1$ ($f\in \CV_{n,m}$) has annihilator ideal
\be \text{Ann}_{\E_{n+m}}^{}(f) = \E_{n+m}(y_i-1,\hat y_j-1)_{i=1}^n{}_{j=1}^m\,, \label{eg-const} \ee
and is $q$-holonomic by a straightforward application of Lemma \ref{lemma:hol-y}.

The delta function in discrete variables $h(a_1,...,a_n)=\delta_{a_1,0}\cdots \delta_{a_n,0} = \displaystyle\begin{cases} 1 & a_i=0\;\forall\,i \\ 0 & \text{otherwise}\end{cases}$\; has
\be \text{Ann}_{\E_{n+m}}^{}(h) = \E_{n+m}(y_i-1,\hat x_j-1)_{i=1}^n{}_{j=1}^m\,. \label{eg-delta-a} \ee
It is $q$-holonomic by an application of Lemma \ref{lemma:hol-y} with $\hat x_j\leftrightarrow \hat y_j$ swapped. (This ``swap,'' more precisely $(\hat x_j,\hat y_j)\mapsto (\hat y_j,\hat x_j^{-1})$, is an automorphism of $\E_{n+m}$ known as Mellin or Fourier transform, \emph{cf.} \cite[Sec. 1.3]{Sabbah}.)

One may also consider a cyclic $\E_{n+m}$-module $M=\E_{n+m} v$  with annihilator ideal
\be \text{Ann}_{\E_{n+m}}(v) =  \E_{n+m}(x_i-1,\hat y_j-1)_{i=1}^n{}_{j=1}^m\,. \label{eg-delta} \ee
It is $q$-holonomic, by Lemma \ref{lemma:hol-y} with $x_i\leftrightarrow y_i$ swapped.
This plays the role of the cyclic module generated by a delta-function in the \emph{continuous} variables, namely $f(a_1,...,a_n) = \delta(x_1-1)\cdots \delta(x_n-1)$. However, such a Dirac delta-function does not exist in our algebraic functional space $\mathbb V_n$, so the cyclic module $M=\E_{n+m} v$ is not embedded in $\CV_{n,m}$.

\medskip

\paragraph{Indicator functions.}

Generalizing the delta-function example above, the indicator function
\begin{subequations} \label{eg-theta}
\be \vartheta_{[a_2,a_3]}(a_1) = \begin{cases} 1 & a_2\leq a_1\leq a_3 \\ 0 & \text{else} \end{cases} \quad \in \, \CV_{0,3}  \label{theta-def} \ee
has the elements $(\hat x_1-q\hat x_3)(\hat y_3-1)$, $(\hat x_1-\hat x_2)(\hat y_2-1)$, and $(\hat x_1-\hat x_3)(\hat x_1-q^{-1}\hat x_2)(\hat y_1-1)$ in its annihilator ideal, and thus is $q$-holonomic for $\E_3$ by Lemma \ref{lemma:hol-y}. Its half-infinite cousin
\be \vartheta_{a_1\leq a_2} = \vartheta_{(-\infty,a_2]}(a_1) = \vartheta_{[a_1,\infty)}(a_2) := \begin{cases} 1 & a_1\leq a_2 \\
0 & \text{else} \end{cases} \quad \in \, \CV_{0,2} \ee
\end{subequations}
has annihilator ideal containing $(\hat x_2-q^{-1}\hat x_1)(\hat y_2-1)$ and $(\hat x_2-\hat x_1)(\hat y_1-1)$, and thus is $q$-holonomic for $\E_2$. Specializations of these functions to constant $a_1$ and/or $a_2$ and/or $a_3$ are similarly $q$-holonomic.

\medskip

\paragraph*{Linear exponentials.}

The ``linear'' functions
\be f(a_1)=q^{a_1}\quad (f\in\CV_{0,1})\,,\qquad g = x_1 \quad (g\in \CV_{1,0})\ee
are both $q$-holonomic, with annihilator ideals
\be \text{Ann}(f) = \E_1(\hat y_1-q)\,,\qquad \text{Ann}(g) = \E_1(y_1-q)\,.\ee
More generally, given any integer vectors $A=(A_1,...,A_n)\in (\Z^n)^*$, $\hat A=(\hat A_1,...,\hat A_m)\in (\Z^{m})^*$, the linear function
\be \label{eg-lin} f(a_1,...,a_m) = q^{\frac12 \hat A\cdot a}x^{\frac 12A} := q^{\frac12\hat A_1a_1+...+\frac12\hat A_ma_m} x_1^{\frac12 A_1}\cdots x_n^{\frac12 A_n} \qquad (f\in \CV_{n,m})\,. \ee
Its annihilator ideal in $\E_{n+m}$ has generators
\be \begin{cases} y_i-q^{\frac12A_i} & A_i\;\text{even} \\
  y_i^2 - q^{A_i} & A_i\;\text{odd} \end{cases} \;\;(i=1,...,n)\quad\text{and}\quad
  \begin{cases} \hat y_i-q^{\frac12\hat A_i} & \hat A_i\;\text{even} \\
  \hat y_i^2 - q^{\hat A_i} & \hat A_i\;\text{odd} \end{cases}\;\; (i=1,...,m)\,,
\ee
and thus $f$ is $q$-holonomic by a direct application of Lemma \ref{lemma:hol-y}.

\bigskip
\paragraph{Quadratic exponentials.}

The ``quadratic'' functions 
\be f(a_1) = q^{a_1^2}\quad (f\in \CV_{0,1})\,,\qquad g = z_{11}^2\quad (f\in \CV_{1,0}) \ee
are $q$-holonomic with annihilator ideals $\E_1(\hat y_1-q\hat x_1^2)$ and $\E_1(y_1-qx_1^2)$, respectively. Similarly, the ``quadratic'' function
\be f(a_1) = q^{\frac12 a_1^2}\quad (f\in \CV_{0,1})\,,\qquad g = z_{11}\quad (f\in \CV_{1,0}) \ee
are $q$-holonomic with annihilators $\E_1(\hat y_1^2-q^2\hat x_1^2)$ and $\E_1(y_1^2-q^2x_1^2)$. We may also consider mixed ``quadratic'' functions such as
\be f(a_1) = x_1^{a_1} \quad (f\in \CV_{1,1})\,,\ee
which is $q$-holonomic with annihilator $\text{Ann}(f) = \E_2(y_1-\hat x_1,\hat y_1-x_1)$.

More generally, let $\hat B:\Z^m\times \Z^m \to \Z$ and $B:\Z^n\times \Z^n\to \Z$ be symmetric bilinear forms, let $C:\Z^n\times \Z^m\to \Z$ be bilinear, and let $A\in (\Z^n)^*$, $\hat A\in (\Z^m)^*$ be integer vectors. Then 
\begin{align} \label{eg-quad} f(a) &= q^{\frac12 \hat B(a,a)+\frac12 \hat A\cdot a} x^{\frac12 C(-,a)+\frac12 A} z^B \qquad (f\in \CV_{n,m}) \\
 & := q^{\frac12 \sum_{ij} \hat B_{ij}a_ia_j+\frac12\sum_i \hat A_ia_i} \prod_{ij} x_i^{\frac12 C_{ij} a_j}\prod_i x_i^{\frac12 A_i} \prod_{ij} z_{ij}^{B_{ij}} \notag
\end{align}
is $q$-holonomic. Its annihilator ideal is cumbersome to write down in general form (because it depends on whether various parameters are even or odd), but easy to analyze. It is generated by expressions of the form $y_i-$(monomial in $q^\pm,x^\pm,\hat x^\pm$) or $y_i^2-$(monomial in $q^\pm,x^\pm,\hat x^\pm$), and by $\hat y_j-$(monomial in $q^\pm,x^\pm,\hat x^\pm$) or $\hat y_j^2-$(monomial in $q^\pm,x^\pm,\hat x^\pm$), for each $i=1,...,n$ and $j=1,...,m$. Thus being $q$-holonomic follows directly from Lemma \ref{lemma:hol-y}.

 \bigskip

\paragraph{Warning!}

The function $f(a_1) = q^{a_1^3}$ ($f\in \CV_{0,1}$) is well known not to be $q$-holonomic, \emph{cf.} \cite[Ex. 2.2]{GL-survey}. Similarly, the analogous ``cubic'' functions involving continous variables, such as $g(a_1) = x_1^{a_1^2}$ ($g\in \CV_{1,1}$) and $h(a_1)=z_{11}^{a_1}$ ($h\in \CV_{1,1}$) are not $q$-holonomic. \bigskip

\paragraph{$q$-Factorials}

Many types of $q$-factorials (or quantum dilogarithms) are $q$-holonomic. For $a\in \Z$, we recall the $q$-Pochhammer symbol \eqref{E:PochhammerSym} given by
\be \label{eg-qf} (x;q)_a := \begin{cases} (1-x)(1-qx)\cdots(1-q^{a-1}x) & a\geq 1 \\ 1 & a=0 \\ 0 & a \leq -1 \end{cases} \ee
This is an element of $\CV_{1,1}$ and its annihilator ideal contains%
\footnote{Note that the $(\hat x-q^{-1})$ factor in the first equation accounts for setting $(x;q)_a=0$ at negative values of $a$; at all positive $a$, the function  $(x;q)_a=0$ is simply annihilated by $\hat y+\hat x x-1$.} %
$(\hat x-q^{-1})(\hat y+\hat x x-1)$ and $(1-x)y+\hat x x-1$, whence by Lemma \ref{lemma:hol-y} it is $q$-holonomic as an $\E_2$-module.

Related $q$-holonomic functions from which we'll construct the R-matrix are
\begin{itemize}
\item $(x;q^2)_a \in \CV_{1,1}$, whose annihilator ideal contains $(\hat x-q^{-1})(\hat y+\hat x^2 x-1)$ and $(1-x)y+\hat x^2 x-1$\,;
\item $(q^{a_2};q^2)_{a_1} \in \CV_{0,2}$, whose annihilator ideal has $(\hat x_1-q^{-1})(\hat y_1+\hat x_1^2 \hat x_2-1)$, $(1-\hat x_2)\hat y_2+\hat x_1^2 \hat x_2-1$\,;
\item $\displaystyle \frac{1}{(x;q^2)_a}\in \CV_{1,1}$, whose annihilator ideal has $(\hat x-q^{-1})((1-\hat x^2 x)\hat y-1)$, $(1-\hat x^2 x)y+x-1$\,;
\item $\displaystyle \frac{1}{(q^2;q^2)_a}\in \CV_{0,1}$, whose annihilator ideal has $(\hat x-q^{-1})((1-q^2\hat x^2)\hat y-1)$\,.
\end{itemize}

\subsection{Closure properties}
\label{sec:closure}

A notable feature of $q$-holonomic modules is that they are closed under many algebraic operations. These closure properties enabled Garoufalidis and L\^e to efficiently prove that the colored Jones invariants of knots formed a $q$-holonomic family. They are of similar importance here.

We review some of the closure properties that will be used in the current work, as they apply to our functional spaces $\CV_{n,m}$ containing both discrete and ``continuous'' variables. Even though the initial application to Jones polynomials \cite{GL-J} only involved acting on functions of discrete variables, the closure properties themselves are much more general. They all derive from purely \emph{algebraic} properties of $q$-holonomic $\E_n$-modules, which make no reference to representations in a particular functional space. If one happens to be working with cyclic $\E_n$-modules generated by functions, the algebraic closure properties can simply be applied to that setting. (This perspective was also espoused in the recent survey \cite{GL-survey}.)

Thus, altogether, there is nothing mathematically novel in this section. We aim to illustrate how established closure properties apply in our setting of interest.

\begin{prop} \emph{\textbf{Closure properties}} \label{prop:closure} \smallskip
Suppose that $f,g \in \mathcal V_{n,m}$ are $q$-holonomic, with arguments $f(a) = f(a_1,\ldots,a_m)$.

\begin{subequations}
\noindent \emph{\textbf{a)}} (Addition and Multiplication)  The functions $f+g \in \CV_{n,m}$ and $fg\in \CV_{n,m}$ are $q$-holonomic. \smallskip

\noindent  \emph{\textbf{b)}} (Shifts) Choose vectors $c\in (k^*)^n$ and $d\in \Z^m$. Then
\be  f(a_1+d_1,...,a_m+d_m)\big|\raisebox{-.15cm}{$x_i\mapsto c_ix_i$ for $i=1,...,n$} \label{shift-f} \ee
is $q$-holonomic. \smallskip

\noindent \emph{\textbf{c)}} (Linear transformations) Let $A\in {\rm Mat}(n\times n',\Z)$, $C\in {\rm Mat}(n\times m',\Z)$ and $D\in {\rm Mat}(m\times m',\Z)$.

We define a $q$-holonomic function $h(a')\in \CV_{n',m'}$, given by
\be h(a')  :=   f(Da')\big|\raisebox{-.15cm}{$x\mapsto q^{Ca}x^A$} \label{lin-hol} \ee
Explicitly, the transformation of the $x$'s here is $x_i\mapsto  \prod_{j'=1}^{m'} q^{C_{ij'}a_{j'}} \prod_{i'=1}^{n'} x_{i'}^{A_{ii'}}$. Important special cases include specializations of discrete variables:
\be \text{$f(a_1,...,a_{m-1},a_m)\in \CV_{n,m}$ $q$-holonomic \;$\Rightarrow$\; $f(a_1,...,a_{m-1},0)\in\CV_{n,m-1}$ $q$-holonomic}\,; \ee
specializations in continuous variables:
\be  \text{$f(a)\in \CV_{n,m}$ $q$-holonomic \;$\Rightarrow$\; $f(a)\big|_{x_n=1}\in\CV_{n-1,m}$ $q$-holonomic}\,; \ee
and extensions in both sorts of variables: when $m\leq m'$ and $n\leq  n'$, we can view $f\in \CV_{n,m}$ as an element of $\CV_{n',m'}$ (a function independent of any extra $a$ or $x$ variables), and $f$ being $q$-holonomic for $\E_{n+m}$ implies that $f$ is $q$-holonomic for $\E_{n'+m'}$ as well. \smallskip

\noindent \emph{\textbf{d)}} The sum over a discrete variable
\be h(a_1,...,a_m,a_{m+1}) := \sum_{b=a_m}^{a_{m+1}} f(a_1,...,a_{m-1},b)\,,\qquad g\in \CV_{n,m+1} \label{summation-hol} \ee
is likewise $q$-holonomic. Similarly, when they converge, the half-infinite sums $\sum_{b=a_m}^\infty f(a_1,...,a_{m-1},b)$ and $\sum_{b=-\infty}^{a_m} f(a_1,...,a_{m-1},b)$ are $q$-holonomic functions in $\CV_{n,m}$; and  $\sum_{b=-\infty}^\infty f(a_1,...,a_{m-1},b)$ is $q$-holonomic in $\CV_{n,m-1}$.

\end{subequations}
\end{prop}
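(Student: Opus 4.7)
My approach is to translate each closure operation into a purely module-theoretic construction on the cyclic $\E_{n+m}$-modules $M_f, M_g \subseteq \CV_{n,m}$, and then invoke the corresponding preservation-of-holonomicity results for $\E_{n+m}$-modules established by Sabbah (whose specialization to $\W_{n+m}$ is often easier and sometimes suffices). Throughout I will freely use the facts that submodules, quotients, and extensions of $q$-holonomic modules are $q$-holonomic; these are standard and can be read off from the $\mathrm{Ext}$-definition of $c(M)$ together with the long exact sequence in Ext.

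For part (a), the function $f+g$ lies in the submodule $M_f+M_g \subset \CV_{n,m}$, which is a quotient of the direct sum $M_f\oplus M_g$ via $(u,v)\mapsto u+v$. Since $M_f\oplus M_g$ is $q$-holonomic (direct sums of $q$-holonomic modules are $q$-holonomic, being extensions), so is its quotient, and hence the cyclic submodule $M_{f+g}\subseteq M_f+M_g$. For multiplication, I would invoke the $q$-analogue of Bernstein's tensor product theorem: the external tensor $M_f\boxtimes M_g$ is $q$-holonomic over $\E_{n+m}\otimes \E_{n+m}$, and restriction along the ``diagonal'' algebra map $\E_{n+m}\to \E_{n+m}\otimes \E_{n+m}$ (well defined because the $x_i$'s and $y_i$'s commute across the two factors after a suitable Ore localization argument) produces a $q$-holonomic $\E_{n+m}$-module which surjects onto $M_{fg}$ via pointwise multiplication.

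Parts (b) and (c) correspond to algebra homomorphisms of $\E_{n+m}$. The discrete shift $a_i\mapsto a_i+d_i$ and the continuous scaling $x_i\mapsto c_i x_i$ are implemented respectively by inner conjugation by $\hat y_i^{d_i}$ and by an algebra automorphism of $\E_{n+m}$; both carry a given annihilator ideal to one of the same homological codimension, preserving $q$-holonomicity. For the general linear transformation in (c), the assignment $f\mapsto f(Da')|_{x\mapsto q^{Ca}x^A}$ corresponds to pullback along an explicit algebra map $\phi:\E_{n'+m'}\to\E_{n+m}$ sending $\hat y'_{j'}\mapsto\prod_j \hat y_j^{D_{jj'}}$, $x'_{i'}\mapsto \prod_j q^{\tfrac12 C_{ij'}\hat x_j} x^{A_{\cdot i'}}$ and similarly on the $y'_{i'}$; the integer conditions on $A,C,D$ guarantee that $\phi$ respects the $q$-commutation relations. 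For the three singled-out special cases — specialization in a discrete variable, specialization in a continuous variable, and trivial extension to a larger algebra — the cleanest route is to verify $q$-holonomicity directly at the level of generators of the annihilator ideal via Lemma \ref{lemma:hol-y}.

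The step I expect to be the main obstacle is (d), preservation under summation. I would rewrite the finite sum as $h(a_1,\dots,a_{m+1})=\sum_{b\in\Z}f(a_1,\dots,a_{m-1},b)\,\vartheta_{[a_m,a_{m+1}]}(b)$; by part (a) and the $q$-holonomicity of the indicator function recorded in \eqref{eg-theta}, the integrand is a $q$-holonomic element of $\CV_{n,m+2}$. The outer summation over $b$ is then a pushforward along the projection that forgets the variable $b$, which module-theoretically is the operation $M\mapsto M/(\hat y_b-1)M$ (or, equivalently after Fourier transform, $M\mapsto M/\hat x_b M$). The non-trivial input is that this ``integration over a discrete variable'' operation preserves $q$-holonomicity; this is the $q$-analogue of Kashiwara's theorem on preservation of holonomicity under proper direct image, proved in this algebraic setting by Sabbah. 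The half-infinite and bi-infinite variants follow by the same argument with $\vartheta_{[a_m,a_{m+1}]}$ replaced by $\vartheta_{[a_m,\infty)}$ or by $1$, once one checks that the resulting literal sum makes sense in $\CV_{n,m}$ (which in our applications follows from the finite support of $q$-Pochhammer factors appearing in the integrand).
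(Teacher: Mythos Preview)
Your treatment of parts (a), (b), and (d) follows essentially the same line as the paper's: sub/quotients of direct sums and tensor products for (a), an algebra automorphism for (b), and multiplication by an indicator followed by a Sabbah-type pushforward/convolution for (d). The paper phrases (d) via the convolution product of \cite[Sec.~2.4]{Sabbah} rather than the quotient $M/(\hat y_b-1)M$, but this is only a cosmetic difference.

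Part (c), however, has a genuine gap. The map $\phi:\E_{n'+m'}\to\E_{n+m}$ you write down is not well-defined: the expression ``$q^{\tfrac12 C_{ij'}\hat x_j}$'' is not an element of the $q$-Weyl algebra (you cannot exponentiate $q$ by a generator), and more fundamentally there is no target in $\E_{n+m}$ for the new multiplication operators $x'_{i'}$ or $\hat x'_{j'}$ once the matrices $A,D$ fail to be invertible. For instance, in the specialization $a_m\mapsto 0$ you would need to send $\hat y'_{j'}$ to $1$ while keeping $\hat x'_{j'}$ invertible, which violates $\hat y'\hat x'=q\hat x'\hat y'$. So restriction along an algebra homomorphism cannot model general linear substitutions. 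The paper instead invokes Sabbah's \emph{inverse image functor} $F^!$ associated to the monomial map $F:(\C^*)^{n+m}\to(\C^*)^{n'+m'}$ determined by $U=\begin{pmatrix}A&0\\C&D\end{pmatrix}$, and shows that $M_h$ is a sub-quotient of $F^!(M_f)$; preservation of holonomicity under $F^!$ is a nontrivial theorem, not a consequence of an algebra map.

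Your fallback for the special cases --- checking holonomicity ``directly at the level of generators of the annihilator ideal via Lemma~\ref{lemma:hol-y}'' --- also does not work as stated. Lemma~\ref{lemma:hol-y} is only a \emph{sufficient} criterion (annihilators of a very specific shape), and an arbitrary $q$-holonomic $f$ need not have annihilators of the form $p_j(x)y_j^{d_j}+q_j(x)$. For extensions one can recover the result by an exterior-product argument, but the specializations $a_m\mapsto 0$ and $x_n\mapsto 1$ genuinely require the inverse image machinery.
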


\noindent\emph{Proof.} The proofs of these statements are essentially identical to the arguments given in \cite{GL-J,GL-survey}, so we will be brief. 

For (a), let $M=\E_{n+m}f$ and $N=\E_{n+m}g$ be the modules generated by $f$ and $g$. The $\E_{n+m}$-module generated by the sum $f+g$ is a sub-quotient of the (algebraic) direct sum $M\oplus N$, and both sub-quotients and direct sums of $q$-holonomic modules are $q$-holonomic \cite{Sabbah}. Similarly, the $\E_{n+m}$-module generated by $fg$ is a submodule of the algebraic tensor product $M\otimes_{k[x^\pm]} N$; and tensor products of $q$-holonomic modules are $q$-holonomic \cite{Sabbah}.

For (b), we may simply note that for any $c\in (k^*)^n$ and $d\in \Z^m$, there is an automorphism of the algebra $\E_{n+m}$ given by $\gamma:(x_i,\hat x_j,y_i,\hat y_j)\mapsto (c_ix_i,q^{d_j}\hat x_j,y_i,\hat y_j)$, and a corresponding linear automorphism of $\CV_{n,m}$ sending
\be h(a_1,...,a_m)\quad\mapsto\quad h(a_1+d_1,...,a_m+d_m)\big|\raisebox{-.15cm}{$x_i\mapsto c_ix_i$ for $i=1,...,n$} \ee
as in \eqref{shift-f} that intertwines the automorphism $\gamma$ of the algebra. The property of being $q$-holonomic is preserved by any such automorphism.

For (c), we assemble $A,C,D$ into an $(n+m)\times (n'+m')$ matrix
\be U = \begin{pmatrix} A&0\\C& D \end{pmatrix}\,. \ee
This linear transformation defines a function $F:(\C^*)^{n+m}\to (\C^*)^{n'+m'}$ under which the pullback of coordinates is $F^* x_i = \prod_{j=1}^{n+m} x_j^{U_{ij}}$. This in turn induces an inverse image functor $F^!:\E_{m+n}$-mod$\to \E_{m'+n'}$-mod, which is shown in \cite[Sec. 2.3]{Sabbah} to preserve $q$-holonomic modules. Letting $M=\E_{n+m}f$, one finds that the module $N=\E_{n'+m'}h$ generated by the function in \eqref{lin-hol} is a sub-quotient of $F^!(M)$, and so $q$-holonomic.

For (d), we use the result of \cite[Sec 2.4]{Sabbah} that the algebraic convolution product of $q$-holonomic modules is $q$-holonomic. For any $h(a_1,...,a_m)$ and $h'(a_1,...,a_m)$, the function
\be h *_m h' := \sum_{b=-\infty}^\infty h(a_1,...,b+a_m) h'(a_1,...,-b)\quad \in \CV_{n,m}\,, \ee
when it exists, generates a submodule of the algebraic convolution product $(\E_{n+m}h)*(\E_{n+m}h')$, and thus is $q$-holonomic. Then we recall that indicator functions \eqref{theta-def} are $q$-holonomic. The summation given by \eqref{summation-hol} is obtained by convolving $f$ (extended to an element of $\CV_{n,m+2}$) with an indicator function; similarly, the half-infinite and infinite sums below \eqref{summation-hol} are obtained by convolving $f$ with half-infinite indicator functions and with the constant function, respectively. \\
\phantom{x}\hfill $\square$

\subsection{$G_{\mathbb D}$ is $q$-holonomic}
\label{sec:GDhol}

With the machinery of $q$-holonomic modules in place, we directly obtain

\begin{prop} \label{prop:GD}
 The ``diagram invariant'' $G_{\mathbb D}(r)$ defined in Section \ref{sec:GD}, which is an element of $\CV_{n,1}$, generates a $q$-holonomic module for $\E_{n+1}$.
\end{prop}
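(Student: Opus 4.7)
The plan is to follow the strategy of Garoufalidis--L\^e in \cite{GL-J}: I will exhibit $G_{\mathbb D}^\times(a_1,\dots,a_m;r)$ as a finite product of elementary $q$-holonomic functions in $\CV_{n,m+1}$, and then apply the closure properties collected in Proposition~\ref{prop:closure} (extension, multiplication, bounded summation) to conclude that the single-variable sum $G_{\mathbb D}\in\CV_{n,1}$ is $q$-holonomic as an $\E_{n+1}$-module.

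First, I would check that each factor appearing in \eqref{R-tensor}--\eqref{di}, together with the two boundary delta functions $\delta_{a_1,0}$, $\delta_{a_m,0}$, is individually $q$-holonomic. Each is a finite product of factors drawn from the elementary examples of Section~\ref{sec:qreview}: the Kronecker deltas and indicator factors $\delta_{a-c,d-b}$, $\vartheta_{c\le a}$, $\vartheta_{d\ge b}$ are $q$-holonomic by \eqref{eg-delta-a}--\eqref{eg-theta}; every monomial prefactor in the $x_i$, $z_{ij}$, $q$ and the discrete indices whose exponent is at most quadratic is a linear or quadratic exponential of the form \eqref{eg-lin}--\eqref{eg-quad}; and every $q$-Pochhammer factor occurring in $R^{\pm 1}$ and in $\md[x_1]=(-x_1)^{r-1} q^{\frac{1}{2}r(r+1)-1}/(q^4 x_1^2;q^2)_{r-1}$ is obtained from the basic examples \eqref{eg-qf} by an integer linear change of arguments, and hence is $q$-holonomic by closure under linear transformations (Proposition~\ref{prop:closure}(c)). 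Multiplying these factors together via Proposition~\ref{prop:closure}(a) shows each building-block function is $q$-holonomic.

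Next, I would regard every building block as an element of the common function space $\CV_{n,m+1}$ with arguments $(a_1,\dots,a_m,r)$ via the extension case of Proposition~\ref{prop:closure}(c), and apply closure under multiplication once more to conclude that the full product $G_{\mathbb D}^\times\in\CV_{n,m+1}$ is $q$-holonomic. Finally, I would iterate the bounded-summation closure (Proposition~\ref{prop:closure}(d)) $m$ times to perform each sum $\sum_{a_i=0}^{r-1}$: the lower bound $0$ is a constant, and the upper bound is the shifted discrete variable $r-1$ (produced from the $r$-variable of the module using closure~(b)), so Proposition~\ref{prop:closure}(d) applies at each step. After eliminating $a_1,\dots,a_m$ in this way, the resulting function $G_{\mathbb D}\in\CV_{n,1}$ is $q$-holonomic.

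The hard part is almost purely bookkeeping: matching every factor in \eqref{R-tensor}--\eqref{di} to one of the basic examples of Section~\ref{sec:qreview} and tracking the integer linear substitutions of arguments. The one mildly conceptual point is that the upper summation bound $r-1$ depends on the discrete variable of the module, but this is exactly the setting for which Proposition~\ref{prop:closure}(d) was stated, so no genuinely new ingredient beyond the closure package is required.
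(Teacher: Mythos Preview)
Your proposal is correct and follows essentially the same approach as the paper: verify that each building block in \eqref{R-tensor}--\eqref{di} and the boundary deltas is $q$-holonomic by matching it to the elementary examples of Section~\ref{sec:qreview}, extend everything to $\CV_{n,m+1}$, multiply, and then sum out the arc variables using Proposition~\ref{prop:closure}. The only cosmetic differences are that the paper explicitly records the identity $(y;q^{-2})_a=(-y)^a q^{-a(a-1)}(y^{-1};q^2)_a$ to reduce the $q^{-2}$-factorials in $R^{\pm1}$ to the standard form \eqref{eg-qf}, and it phrases the final step as applying the general sum of Proposition~\ref{prop:closure}(d) followed by specializing the bounds via Proposition~\ref{prop:closure}(c), rather than invoking the shift closure (b) for the upper bound $r-1$; both routes are equivalent.
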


\noindent\emph{Proof.} All the individual functions \eqref{diag-piece-functions} associated to crossings, cups, and caps that get multiplied to define $G_{\mathbb D}^\times$ in \eqref{GDx} are $q$-holonomic in $\CV_{n,m+1}$. Specifically:
\begin{itemize}

\item The discrete delta-functions $\delta_{a,0} \in \CV_{0,1}$ that enter the final product $G_{\mathbb D}^\times$ are $q$-holonomic (Example \eqref{eg-delta-a}). We use Prop. \ref{prop:closure}c to trivially extend $\delta_{a,0}$ to a $q$-holonomic function in $\CV_{n,m+1}$, independent of the other discrete variables and all the $x$'s.

\item Consider the modified quantum dimension $\md[x_i] =  (-x_i)^{r-1} q^{\frac12 r(r+1)-1} \frac{1}{(q^4 x_i^2;q^2)_{r-1}} \in \CV_{1,1}$. This may be assembled as a product of 1) a $q$-factorial $\frac{1}{(x_i;q^2)_r}$, which was explained to be $q$-holonomic below \eqref{eg-qf}, and in which we use Prop. \ref{prop:closure}b to shift $x_i\to q^2 x_i$ and $r\to r-1$; 2) a general quadratic exponential $x_i^r q^{\frac12 r^2}$ as in Example \ref{eg-quad}, in which we shift $x_i\to -x_i$; 3) a linear exponential $x_i^{-1}q^{\frac12 r}$ as in Example \eqref{eg-lin}, in which we shift $x_i\to -x_i$; and 4) an overall constant $q^{-1}$. All these pieces are $q$-holonomic functions in $\CV_{1,1}$, so Prop. \ref{prop:closure}a guarantees their product will be $q$-holonomic as well. Then we use Prop. \ref{prop:closure}c to extend $\md[x_i]$ to a $q$-holonomic function in $\CV_{n,m+1}$ (independent of the other $x$'s and discrete $a,b,c$ variables).

\item The cup and cap functions $\eta_a[x_i]=1,\epsilon_a[x_i]=1$, treated as elements of $\CV_{n,m+1}$, are just constant functions independent of all the variables; they are $q$-holonomic by Example \ref{eg-const}.

The cup and cap functions $\eta^*_a[x_i]=q^{2a(1-r}x_i^{r-1}$ and $\epsilon^*_a[x_i]=q^{-2a}x_i$, both in $\CV_{1,2}$ (the discrete variables are $a$ and $r$), are products of general linear and quadratic exponentials, as in Examples \ref{eg-lin}, \ref{eg-quad}. They are extended by Prop. \ref{prop:closure}c to $q$-holonomic functions in $\CV_{n,m+1}$.

\item The R-matrices $R^{a,b}_{c,d}[x_i,x_j],(R^{-1})^{a,b}_{c,d}[x_i,x_j]\in \CV_{2,5}$ are products of discrete delta-functions (Example \eqref{eg-delta-a}), indicator functions (Example \eqref{eg-theta}), linear and quadratic exponentials (Examples \ref{eg-lin}, \ref{eg-quad}), and $q$-factorials (Example \eqref{eg-qf}), all with various shifts (Prop. \ref{prop:closure}b) and linear transformations (Prop. \ref{prop:closure}c).
Some of the $q$-factorials involve $q^{-2}$ rather than $q^2$; but it is easy to put them into the same form as Example \eqref{eg-qf} by observing that
\be (y;q^{-2})_a =  (-y)^aq^{-a(a-1)} (y^{-1};q^2)_a\,,\ee
which is a ``standard'' $q$-factorial multiplied by linear and quadratic exponentials. Thus $R,R^{-1} \in \CV_{2,5}$ are $q$-holonomic.  We use Prop. \ref{prop:closure}c to extend $R,R^{-1}$ to $q$-holonomic functions in $\CV_{n,m+1}$.

\end{itemize}

\noindent The product of $G_{\mathbb D}^\times \in \CV_{n,m+1}$ of these $q$-holonomic functions is $q$-holonomic by Prop. \ref{prop:closure}a. The final diagram invariant $G_{\mathbb D}\in \CV_{n,1}$ is obtained from $G_{\mathbb D}^\times$ by summing over every discrete variable, and then specializing the bounds of each summation to be $0$ and $r-1$. It is therefore $q$-holonomic by Prop. \ref{prop:closure}d (for the summations) and Prop. \ref{prop:closure}c (for the specializations). \hfill $\square$


\section{Specializing to a root of unity}
\label{sec:spec}

We proved in Proposition \ref{prop:GD} that, for any $(1,1)$-tangle diagram $\mathbb D$, $G_{\mathbb D}$ is $q$-holonomic. Thus it generates a $q$-holonomic module for $\E_{n+1} = \C(q)[x_1^\pm,y_1^\pm,...,x_n^\pm,y_n^\pm,\hat x^\pm,\hat y^\pm]$, where the action on functions $f\in \CV_{n,1}$ (including $G_{\mathbb D}$) is
\be \label{En1-action} \begin{array}{l@{\qquad}l} x_i \,:\, f\mapsto x_if & \hat x\,:\, f \mapsto q^r f \\
y_i \,:\, f \mapsto f\big|\raisebox{-.15cm}{$x_i\mapsto q x_i, z_{ii}\mapsto q^{\frac12}x_i z_{ii},z_{ij}\mapsto x_j^{\frac12}z_{ij}$} & \hat y\,:\, f \mapsto  f\big|_{r\,\mapsto\, r+1}\,. \end{array} \ee
We would now like to prove that the ADO invariant $N^r_L$ is $q$-holonomic, in an appropriate sense. The ADO invariant is an actual invariant of the framed, oriented link $L$ obtained by closing the tangle with diagram $\mathbb D$.

We recall from Section \ref{sec:GD} that the ADO invariant is obtained from $G_{\mathbb D}$ by setting $q^{\frac12}=\rtu^{\frac12},$ $x_i^{\frac12}=\rtu^{\alpha_i/2}$, and $z_{ij}=\rtu^{\alpha_i\alpha_j/}$.  More succinctly, if we make explicit the dependence on $x,z,q$ in $G_{\mathbb D}(r,x^{\frac12},z;q)$, then
\be \label{ADO-spec3} N_L^r(\alpha) = G_{\mathbb D}(r;\rtu^\alpha/2,\rtu^{\alpha\otimes \alpha/2};\rtu^{1/2})\,. \ee

As prefaced in the introduction, explaining what it means for functions defined at roots unity $q=\zeta_{2r}$ to be holonomic is a subtle matter. 
By Corollary \ref{cor:struc}, we may think of the ADO invariant at each fixed $r$ as an element of the functional space
\be  N_L^r \in \CV_n^{(r)} := \{\text{quasi-periodic, meromorphic functions}: (\C/2r\Z)^n\to \C\}\,, \ee
with periodicity of the form $f(\alpha_1,...,\alpha_i+2r,...,\alpha_n)=\zeta_{2r}^{\sum_j 2rC_{ij}\alpha_j}f(\alpha_1,...,\alpha_i,...,\alpha_n)$ for some (unspecified) $C_{ij}$.
Each space $\CV_n^{(r)}$ has an action of the $q$-Weyl algebra at a $2r$-th root of unity
\be \label{Enr} \mathcal E_n^{(r)} := \C[x_1^\pm,y_1^\pm,...,x_n^\pm,y_n^\pm]/(y_ix_j - \zeta_{2r}^{\delta_{ij}}x_jy_i) \ee
given by
\be \label{Ealpha} x_i\cdot f(\alpha) = \zeta_{2r}^{\alpha_i} f(\alpha)\,,\qquad y_i\cdot f(\alpha) = f(\alpha_1,...,\alpha_i+1,...,\alpha_n) \qquad \big(f\in \CV_n^{(r)}\big)\,. \ee
However, due to the quasi-periodicity in Part (iii) of Cor. \ref{cor:struc}, it is also clear that at each fixed $r$ the ADO invariant of an $n$-strand link will trivially satisfy $n$ independent recursion relations  $\big(\prod_j x_j^{-2rC_{ij}}y_i^{2r}-1\big)N_L^r=0$ ($i=1,...,n$), where $C_{ij}$ is the linking matrix of $L$. In order to obtain a nontrivial statement, we work in a \emph{family}, considering all $r\in \mathbb N_{\geq 2}$ at once.

Consider the evaluation maps
\be \text{ev}_r: \begin{array}{ccc}\E_n &\dashrightarrow& \mathcal E_n^{(r)} \\
    A(x,y;q) &\mapsto &A(x,y;\zeta_{2r})\,. \end{array} \ee
Note that each individual $\text{ev}_r$ is not defined on all of $\E_n= \C(q)[x_1^\pm,y_1^\pm,...,x_n^\pm,y_n^\pm]/(y_ix_j - q^{\delta_{ij}}x_jy_i)$, since elements of $\E_n$ may have denominators in $q$ that vanish at $q=\zeta_{2r}$. However, given any $A\in \E_n$, the family of evaluations $\big\{\text{ev}_rA\big\}_{r\in \mathbb N}$ is defined for all but finitely many $r\in \mathbb N$. Moreover, where it makes sense, $\text{ev}_r$ is clearly an algebra map, satisfying $\text{ev}_r(AB)=\text{ev}_r(A)\text{ev}_r(B)$.

For any family of functions $\big\{f_r\in \CV_n^{(r)}\big\}_{r\in \mathbb N}$, we may construct a left ideal $\mathcal I[f] \subseteq \E_n$ as
\be \label{def-If} \mathcal I[f] := \{ A\in \E_n\,|\, \text{ev}_r(A) f_r=0\;\text{for all but finitely many $r\in \mathbb N$}\}\,, \ee
throwing out any $r$'s for which $\text{ev}_r(A)$ is not defined. Then we say:

\begin{definition} \label{def:qfam}
The family of functions $\big\{f_r\in \CV_n^{(r)}\}_{r\in \mathbb N}$ is $q$-holonomic if the associated cyclic module $\E_n/\mathcal I[f]$ is $q$-holonomic.
\end{definition}

We will prove in this section that the \emph{family} of ADO invariants $\big\{N_L^r \in \mathcal V_n^{(r)}\big\}_{r\geq 2}$ of any framed, oriented link $L$ is $q$-holonomic. We will also prove that the associated ideal $\mathcal I[N_L]$ is contained in the annihilation ideal of the colored Jones polynomial of $L$.

\subsection{Quantum Hamiltonian reduction}
\label{sec:red}

We introduce a preliminary result that will help us relate the annihilation ideal of $G_{\mathbb D}$ and the family of ADO invariants. The result is purely algebraic in nature, independent of particular functional spaces.

Suppose we have a left ideal $\mathcal I_n\subseteq \E_n$ and a nonzero element $c\in k^*=\C(q)^*$. Then we can construct a left ideal $\mathcal I^c_{n-1}\subseteq \E_{n-1}$ by first taking the intersection of $\mathcal I_n$ with the subalgebra
\be \label{def-Etilde} \widetilde \E_{n-1} := k[x_1^\pm,y_1^\pm,...,x_{n-1}^\pm,y_{n-1}^\pm,x_n^\pm]/(y_ix_j-q^{\delta_{ij}}x_jy_i)_{i,j=1}^{n-1} \;\; \subset \;\E_n\,, \ee
in which $x_n$ is central (because $y_n$ is no longer present), and then specializing $x_n=c$, noting that $\E_{n-1}\simeq \widetilde \E_{n-1}/(x_n-c)$.
 All together,
\be \mathcal I_{n-1}^c = \big(\mathcal I_n \cap \widetilde \E_{n-1}\big)\big|_{x_n=c}\,. \label{def-Inc} \ee
Explicitly, the elements of $\mathcal I_n$ and $\mathcal I_{n-1}^c$ are related by
\be A(x_1,y_1,...,x_{n-1},y_{n-1})\in \mathcal I_{n-1}^c \quad\Leftrightarrow \quad
\begin{array}{c} \exists\;\; \widetilde A(x_1,y_1,...,x_n) \in \mathcal I_n\;\text{independent of $y_n$} \\[.1cm]
\text{s.t. $\widetilde A(x_1,y_1,...x_{n-1},y_{n-1},c)=A$ }\end{array}  \ee

The relation between the associated modules $\E_n/\mathcal I_n$ and $\E_{n-1}/\mathcal I_{n-1}^c$, is a version of quantum Hamiltonian reduction. In this case, the reduction is with respect to a multiplicative moment map $x_n$, and central character $c$.%
\footnote{Very similar reductions were used in \cite{Dimofte-QRS} to construct quantum A-polynomials from ideal triangulations of knot complements. The construction there was not yet rigorous, but could hopefully be made so using Prop.~\ref{prop:red}.} %
Quantum Hamiltonian reduction is a familiar operation in the study of D-modules and representation theory, \emph{cf.}  \cite{EtingofGinzburg, CBEG, Losev, Jordan}, which is generally expected to preserve holonomic modules (since it is the quantization of a Lagrangian correspondence). We will use the following result:

\begin{prop} \label{prop:red}
For $n\geq 2$, let $\mathcal I_n\subseteq \E_n$ be a left ideal, and let $\mathcal I_{n-1}^c = \big(\mathcal I\cap \widetilde \E_{n-1}\big)\big|_{x_n=c}\subseteq \E_{n-1}$ as above. If $\E_n/\mathcal I_n$ is a $q$-holonomic $\E_n$-module then $\E_{n-1}/\mathcal I_{n-1}^c$ is a $q$-holonomic $\E_{n-1}$-module.
\end{prop}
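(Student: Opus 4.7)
The plan is to use the structural results of Propositions~\ref{prop:EW} and~\ref{prop:cyclic} to pass from the $\E_n$-module $M=\E_n/\mathcal I_n$ to a cyclic $q$-holonomic $\W_n$-submodule, extract explicit polynomial relations in each $y_i$ for $i=1,\ldots,n-1$, specialize these relations at $x_n=c$, and then apply Lemma~\ref{lemma:hol-y} to conclude that the reduced module $M'':=\E_{n-1}/\mathcal I_{n-1}^c$ is $q$-holonomic.

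Concretely, I would first choose a cyclic $q$-holonomic $\W_n$-module $N=\W_n\cdot v$ with $M=\E_n\otimes_{\W_n}N$ (possible by Propositions~\ref{prop:EW} and~\ref{prop:cyclic}), so that the canonical Bernstein filtration $\CF_I N=\CF_I\W_n\cdot v$ satisfies $\dim_k \CF_I N\leq C I^n$ for $I\gg 0$. Let $J:=\mathrm{Ann}_{\W_n}(v)$. For each $i\in\{1,\ldots,n-1\}$, the subring $R_i:=\C(q)\langle x_1,\ldots,x_n,y_i\rangle\subset\W_n$ has Gelfand--Kirillov dimension $n+1$: it is the polynomial ring in the $n-1$ commuting variables $\{x_j:j\neq i\}$ tensored with the two-dimensional quantum plane generated by $x_i,y_i$. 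Since $R_i\cdot v\subseteq N$ has GK-dimension at most $n$, the natural map $R_i\to R_iv$ cannot be injective, so $J_i:=J\cap R_i$ is a nonzero left ideal of $R_i$. Every nonzero element of $J_i$ lies in $\widetilde\E_{n-1}$ and has the form $A_i=\sum_{k=0}^{d_i}p_{i,k}(x_1,\ldots,x_n)y_i^k$ with leading coefficient $p_{i,d_i}\neq 0$.

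Next I would argue that for each $i$ one can choose $A_i\in J_i$ whose leading coefficient does not vanish on the hyperplane $x_n=c$. Because $(x_n-c)$ is central in $R_i$, the set $L_i\subseteq\C(q)[x_1,\ldots,x_n]$ of leading $y_i$-coefficients of elements of $J_i$ is a nonzero ideal. Provided $L_i\not\subseteq(x_n-c)$, the specialization $\bar A_i:=A_i|_{x_n=c}$ belongs to $\mathcal I_{n-1}^c$ and has the form $\bar p_i(x_1,\ldots,x_{n-1})y_i^{d_i}+(\text{lower $y_i$-terms})$ with $\bar p_i\neq 0$. Collecting such relations for $i=1,\ldots,n-1$ and invoking Lemma~\ref{lemma:hol-y} on the cyclic $\E_{n-1}$-module $M''$ (in the mild extension that allows the lower-order term to be a full polynomial in $y_i$ rather than a single monomial in $x$; the counting argument is identical) then yields $q$-holonomicity of $M''$.

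The main obstacle is the degenerate case where $L_i\subseteq(x_n-c)$ for some $i$, so that every relation in $J\cap R_i$ has leading coefficient divisible by $(x_n-c)$ and specializes away. I would address this by enlarging the supply of relations beyond $J\cap R_i$ to include mixed-degree elements of $\mathcal I_n\cap\widetilde\E_{n-1}$ whose leading monomials (under a suitable term order) survive specialization, using Noetherianity of $\C(q)[x_1,\ldots,x_n]$ to extract a nontrivial $\bar A_i$. A more conceptual alternative---preferable if the combinatorial bookkeeping proves delicate---is to invoke the $\mathrm{Ext}$-characterization of $q$-holonomicity from~\cite{Sabbah} together with the Koszul resolution $0\to\widetilde\E_{n-1}\xrightarrow{\cdot(x_n-c)}\widetilde\E_{n-1}\to\E_{n-1}\to 0$ and the flatness of $\E_n$ as a right $\widetilde\E_{n-1}$-module (from the Ore extension presentation $\E_n=\widetilde\E_{n-1}[y_n^{\pm};\sigma]$ with $\sigma(x_n)=qx_n$), and to track how homological codimension increments by one in each of the two steps: restriction of scalars from $\E_n$ to $\widetilde\E_{n-1}$, followed by specialization at $x_n=c$.
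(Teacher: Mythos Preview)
Your strategy diverges substantially from the paper's, and as written it has a real gap.

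The paper never tries to extract explicit recursion relations and feed them into Lemma~\ref{lemma:hol-y}. Instead it works entirely with Sabbah's closure properties: it forms the tensor product of $\E_n/\mathcal I_n$ with the delta-function module $M_n^\delta=\E_n/(y_1-1,\ldots,y_{n-1}-1,x_n-c)$, takes the cyclic submodule $\widetilde M=\E_n(v\otimes\delta^{(n)})$, and proves a key factorization $\widetilde M\simeq(\E_{n-1}/\mathcal I_{n-1}^c)\boxtimes M_1^\delta$. Since tensor products and submodules preserve $q$-holonomicity, $\widetilde M$ is $q$-holonomic; then the inverse-image functor $f^!$ (for the inclusion $x_n=c$) strips off the $M_1^\delta$ factor and yields $\E_{n-1}/\mathcal I_{n-1}^c$ directly. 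No relations are ever written down, and no degenerate case arises.

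Your route has two issues. First, the ``mild extension'' of Lemma~\ref{lemma:hol-y} you invoke---allowing a full polynomial in $y_i$ rather than the two-term form $p_j(x)y_j^{d_j}+q_j(x)$---is plausible but the counting argument is \emph{not} identical. In the paper's proof the special shape lets one bound $a_j<c_j$ once $b_j\geq d_j$ independently for each $j$; with general lower-order $y_i$-terms, reducing in one variable reintroduces lower $y_i$-powers with uncontrolled $x$-degrees, and one needs a genuine Gr\"obner-type induction (or the characteristic-variety criterion you allude to). This is fixable but is a separate lemma, not a remark.

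Second, and more seriously, the degenerate case $L_i\subseteq(x_n-c)$ is a genuine obstruction that you do not resolve. Your fix~(a) is too vague: even using Noetherianity and mixed-degree elements of $\mathcal I_n\cap\widetilde\E_{n-1}$, there is no mechanism preventing \emph{every} element of $\mathcal I_n\cap R_i$ from lying in $(x_n-c)R_i$; this amounts to $(x_n-c)$-torsion in $M$ that your GK-dimension argument does not see. Your fix~(b) is in the right spirit---indeed it is morally the paper's argument---but ``track how homological codimension increments'' hides the real work: restricting a $q$-holonomic $\E_n$-module along $\E_n\supset\widetilde\E_{n-1}$ does not preserve finite generation, so one cannot simply apply the Ext criterion step by step. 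The paper's tensor-with-delta construction is precisely what makes this restriction-then-specialize procedure rigorous.
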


We give a self-contained proof of this Proposition in Appendix \ref{app:red}.

A useful way to relate Hamiltonian reduction to more elementary operations on $q$-holonomic modules is the following. Let $v$ be the generator of $\E_n/\mathcal I_n$ and let $\delta^{(n)}_c$ denote the generator of the module $\E_n/(y_1-1,...,y_{n-1}-1,x_n-c)$, a ``delta-function'' module in the final variable $x_n$. Just like Example \eqref{eg-delta}, this delta-function module is $q$-holonomic. We denote by $\E_n(v\otimes \delta^{(n)}_c)$ the submodule of the tensor-product-module $(\E_n v)\otimes (\E_n \delta^{(n)}_c)$ generated by $v\otimes \delta^{(n)}_c$.

Let us also consider the map of rings $f^*:k[x_1^\pm,...,x_n^\pm]\mapsto k[x_1^\pm,...,x_{n-1}^\pm]$ given by $f^*(x_i)=x_i$ for $1\leq i \leq n-1$ and $f^*(x_n)=c$. There is a corresponding inverse-image functor $f^!:\E_n\text{-mod}\to\E_{n-1}\text{-mod}$ defined in \cite[Sec. 2.3]{Sabbah}. It is explained in the proof of Prop. \ref{prop:red} in Appendix \ref{app:red} that
\be \E_{n-1}/\mathcal I_{n-1}^c \simeq f^!\big(\E_{n}(v\otimes \delta^{(n)}_c)\big)\,. \ee
Once one realizes this, it follows from the fact that tensor products, subs, and inverse images all preserve $q$-holonomic modules that $\E_{n-1}/\mathcal I_{n-1}^c$ must be $q$-holonomic as well.

\medskip

We note that the quantum Hamiltonian reduction discussed above is closely related to specialization of variables, in the case of cyclic $\E_n$-modules generated by functions. For example, if $\E_n$ acts on some space of functions of $(x_1,...,x_n)$, and the function $f(x_1,...,x_n)$ generates a cyclic module $\E_n f = \E_n/\mathcal I_n$, $\mathcal I_n = \text{Ann}_{\E_n}(f)$, then it is easy to see that the specialization $f_c(x_1,...,x_{n-1}) := f(x_1,...,x_{n-1},c)$ generates a module $\E_{n-1} f_c= \E_{n-1}/\mathcal I_{n-1}$, such that the Hamiltonian-reduction ideal $\mathcal I_{n-1}^c$ above satisfies $\mathcal I_{n-1}^c \subseteq \mathcal I_{n-1}$. In other words, the specialized module $\E_{n-1} f_c$ is a quotient of $\E_{n-1}/\mathcal I_{n-1}^c$. Thus, a corollary of Prop. \ref{prop:red} is that when $f$ is $q$-holonomic its specialization $f_c$ must be $q$-holonomic as well. Of course, we already knew this (Prop. \ref{prop:closure}c). The virtue of the algebraic formulation of quantum Hamiltonian reduction above is that it applies even when considering modules that are not generated by functions; that is how we will use it in the next section.

\subsection{The ADO invariants are a $q$-holonomic family}
\label{sec:ADOhol}

We are now ready to prove one of our main results, by using quantum Hamiltonian reduction to implement the specializations $q^r=-1$ in the ADO invariants.

\begin{theorem} \label{thm:ADO}
Let $L$ be a framed, oriented link with $n$ components. Then the family of ADO invariants $\{N_L^r\}_{r\geq 2}$ is $q$-holonomic for $\E_n$. In other words, the associated ideal
\be \mathcal I[N_L] := \{A\in \E_n \,|\, \text{ev}_r(A) N_L^r=0\;\text{for all but finitely many $r$}\} \ee
as in \eqref{def-If} defines a $q$-holonomic module $\E_n/\mathcal I[N_L]$.
\end{theorem}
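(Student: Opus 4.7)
\medskip\noindent\emph{Proof Plan.} The plan is to deduce Theorem~\ref{thm:ADO} by applying quantum Hamiltonian reduction (Proposition~\ref{prop:red}) to the already-established $q$-holonomicity of the diagram invariant $G_{\mathbb D}$ (Proposition~\ref{prop:GD}). The reduction will eliminate the pair $(\hat x,\hat y)$ from $\E_{n+1}$ and implement the specialization $\hat x=-1$, which is precisely the value that $\hat x=q^r$ takes when $q=\zeta_{2r}$.

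\textbf{Step 1.} Fix a $(1,1)$-tangle diagram $\mathbb D$ whose closure is $L$. Let $\mathcal I_{n+1}:=\text{Ann}_{\E_{n+1}}(G_{\mathbb D})\subseteq\E_{n+1}$, so that by Proposition~\ref{prop:GD} the left $\E_{n+1}$-module $\E_{n+1}/\mathcal I_{n+1}$ is $q$-holonomic. Apply Proposition~\ref{prop:red} with the constant $c=-1\in k^*$, reducing along the variable pair $(\hat x,\hat y)$, to produce a $q$-holonomic $\E_n$-module $\E_n/\mathcal I_n^{-1}$, where
\begin{equation*}
\mathcal I_n^{-1}=\big(\mathcal I_{n+1}\cap\widetilde\E_n\big)\big|_{\hat x=-1},
\end{equation*}
with $\widetilde\E_n$ the subalgebra of $\E_{n+1}$ obtained by dropping $\hat y$ (so $\hat x$ becomes central), as in \eqref{def-Etilde}.

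\textbf{Step 2.} The crux is the inclusion $\mathcal I_n^{-1}\subseteq \mathcal I[N_L]$. Let $B(x,y;q)\in\mathcal I_n^{-1}$; by definition there exists $A(x,y,\hat x;q)\in\mathcal I_{n+1}\cap\widetilde\E_n$ with $A(x,y,-1;q)=B(x,y;q)$. Since $A$ is independent of $\hat y$, and $\hat x$ acts on $G_{\mathbb D}(r)$ as multiplication by $q^r$, the identity $A\cdot G_{\mathbb D}=0$ in $\CV_{n,1}$ says exactly that
\begin{equation*}
A\bigl(x,y,q^r;q\bigr)\,G_{\mathbb D}(r)\;=\;0\qquad\text{for every }r.
\end{equation*}
Now specialize $q^{1/2}\mapsto\zeta_{2r}^{1/2}$, $x_i^{1/2}\mapsto\zeta_{2r}^{\alpha_i/2}$, $z_{ij}\mapsto\zeta_{2r}^{\alpha_i\alpha_j/2}$, as in \eqref{ADO-spec}--\eqref{ADO-spec2}. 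This specialization is defined for all but finitely many $r$ (those where denominators in $A$ vanish at $q=\zeta_{2r}$), sends $G_{\mathbb D}(r)$ to $N_L^r$, and sends $q^r$ to $\zeta_{2r}^r=-1$. A direct check (using the transformations $\alpha_i\mapsto\alpha_i+1$ acting on $\zeta_{2r}^{\alpha_i}$ and $\zeta_{2r}^{\alpha_i\alpha_j/2}$) shows that the $\E_n$-action on $G_{\mathbb D}$ given in \eqref{En1-action} descends, after specialization, to the $\mathcal E_n^{(r)}$-action on $\CV_n^{(r)}$ given in \eqref{Ealpha}. Hence $\text{ev}_r(B)\,N_L^r=A(x,y,-1;\zeta_{2r})N_L^r=0$ for all but finitely many $r$, so $B\in\mathcal I[N_L]$.

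\textbf{Step 3.} The inclusion $\mathcal I_n^{-1}\subseteq\mathcal I[N_L]$ exhibits $\E_n/\mathcal I[N_L]$ as a quotient of the $q$-holonomic module $\E_n/\mathcal I_n^{-1}$. Since sub-quotients of $q$-holonomic $\E_n$-modules are $q$-holonomic (a standard closure property recalled in the proof of Proposition~\ref{prop:closure}, \emph{cf.}~\cite{Sabbah}), we conclude that $\E_n/\mathcal I[N_L]$ is $q$-holonomic, as required.

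\textbf{Main obstacle.} The non-trivial content is Step~2: checking that the somewhat elaborate action \eqref{En1-action} of the $y_i$'s on $G_{\mathbb D}\in\CV_{n,1}$ --- which shifts not only $x_i$ but also the quadratic monomials $z_{ii}$ and $z_{ij}$ --- is \emph{exactly} intertwined with the naive action $\alpha_i\mapsto\alpha_i+1$ on $N_L^r$ under the specialization \eqref{ADO-spec}. This is a routine verification once one notes that $(\alpha_i+1)\alpha_j/2=\alpha_i\alpha_j/2+\alpha_j/2$ and $(\alpha_i+1)^2/2=\alpha_i^2/2+\alpha_i+1/2$, so that the seemingly baroque transformations $z_{ii}\mapsto q^{1/2}x_iz_{ii}$ and $z_{ij}\mapsto x_j^{1/2}z_{ij}$ in \eqref{En1-action} are the unique ones that make the diagram of specializations commute. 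The other technical point, that the simultaneous specialization $\hat x=-1$ followed by $q=\zeta_{2r}$ agrees with $q=\zeta_{2r}$ followed by using $q^r=-1$, is automatic because $A$ is a Laurent polynomial in $\hat x$ with coefficients in $\C(q)$.
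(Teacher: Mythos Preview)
Your proposal is correct and follows essentially the same route as the paper: lift to the diagram invariant $G_{\mathbb D}$, apply Proposition~\ref{prop:red} at $\hat x=-1$ to get the $q$-holonomic module $\E_n/\mathcal I_n^{-1}$, verify the inclusion $\mathcal I_n^{-1}\subseteq\mathcal I[N_L]$ by specializing $q=\zeta_{2r}$ (so that $q^r\mapsto-1$), and conclude by the quotient closure property. The only cosmetic difference is that the paper clears denominators explicitly by multiplying through by a polynomial $f(q)\in\C[q]$ before evaluating at $q=\zeta_{2r}$, whereas you absorb this into the phrase ``defined for all but finitely many $r$''; your added remark on why the $y_i$-action on the $z_{ij}$ variables intertwines the shift $\alpha_i\mapsto\alpha_i+1$ is a helpful elaboration of a point the paper leaves implicit.
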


\noindent\emph{Proof}. Choose a diagram $\mathbb D$ of a $(1,1)$ tangle whose closure is $L$, as in Section \ref{sec:preADO}, and let $G_{\mathbb D}(r;x^{\frac12},x;q^{\frac12})\in \CV_{n,1}$ be the associated ``diagram invariant.'' From Proposition \ref{prop:GD}, we know that $G_{\mathbb D}$ generates a $q$-holonomic left $\E_{n+1}$-module (with $\E_{n+1} = k[x_1^\pm,y_1^\pm,...,x_n^\pm,y_n^\pm,\hat x^\pm,\hat y^\pm]/(...)$ acting as in \eqref{En1-action}). Let
\be \mathcal I_{n+1} = \text{Ann}_{\E_{n+1}} G_{\mathbb D} \ee
be its annihilation ideal. Construct the reduced ideal
\be \mathcal I_{n}^{-1} := \big(\mathcal I_{n+1}\cap \widetilde \E_n\big)\big|_{\hat x = -1} \quad \subseteq \E_n\ee
as in \eqref{def-Inc}. This is quantum Hamiltonian reduction at $c=-1$ eliminates the $\hat y$ variable (which shifted $r\mapsto r+1$) and sets the  $\hat x$ variable (which acted as $q^r$) to $-1$. 

We claim that $\mathcal I_n^{-1}\subseteq \mathcal I[N_L]$. To see this, choose any $A(x,y;q)=A(x_1,y_1,...,x_n,y_n;q)\in \mathcal I_n^{-1}$. By the definition of $\mathcal I_n^{-1}$, there exists $\tilde A(x,\hat x,y;q)\in \widetilde\E_n = k[x_1,y_1,...,x_n,y_n,\hat x]\subset \E_{n+1}$ such that
\be \label{AtA} \tilde  A(x,\hat x=-1,y;q)= A(x,y;q) \qquad
\text{and}\qquad \tilde A(x,\hat x,y;q) G_{\mathbb D}(r;x^{\frac12},z;q)\,.  \ee
Choose any nonzero $f(q)\in\C[q]$ such that $f(q)A(x,y;q)$ and $f(q)A(x,\hat x,y;q)$ both have evaluations at $q=\zeta_{2r}$ for all $r\in \mathbb N$.
From the first equality in \eqref{AtA}, we have $f(\zeta_{2r})\tilde A(x,\hat x,y;\zeta_{2r})=f(\zeta_{2r})A(x,y;\zeta_{2r})$ for all $r$. Combining this with the second equality in \eqref{AtA}, evaluated at $q=\zeta_{2r}$, we have
\be f(\zeta_{2r}) A(x,y;\zeta_{2r}) G_{\mathbb D}(r;x^{\frac12},z;\zeta_{2r}) = 0\,. \ee
We may further specialize $x^{\frac12}= e^{\frac{i\pi}{2r}\alpha}$ and $z=e^{\frac{i\pi}{2r}\alpha\otimes \alpha}$ as in \eqref{ADO-spec3}, leading to
\be f(\zeta_{2r}) A(x,y;\zeta_{2r}) N_L^r(\alpha) = 0 \ee
for all $r$, with action \eqref{Ealpha}. Since $f(\zeta_{2r})$ can only vanish at (at most) finitely many values of $r\in \mathbb N$, we find that $A(x,y;\zeta_{2r})\in \mathcal I[N_L]$.

From Proposition \ref{prop:red} we know that the module $\E_n/\mathcal I_n^{-1}$ is $q$-holonomic. Moreover, since $\mathcal I_n^{-1}\subseteq \mathcal I[N_L]$, we find that  $\E_n/\mathcal I[N_L] \simeq (\E_n/\mathcal I_n^{-1})/(\mathcal I[N_L]/\mathcal I_n^{-1})$ is a quotient of $\E_n/\mathcal I_n^{-1}$. Since quotients of $q$-holonomic modules are $q$-holonomic by \cite[Cor. 2.1.6]{Sabbah}, it follows that $\E_n/\mathcal I[N_L]$ is $q$-holonomic. \phantom{X} \hfill $\square$

\subsection{Relation to the AJ conjecture}
\label{sec:Jones}

Finally, we can relate the recursion relations satisfied by the ADO family to those satisfied by the colored Jones function. Let $L=K$ be an oriented knot with framing $f$.

\begin{theorem} \label{thm:Jones}
Let $\mathcal I[N_K]\in \E_1$ be the ideal in Theorem \ref{thm:ADO} that annihilates the ADO family. Let $\big(J_N(q)\big)_{N\in \mathbb N}$ be the sequence of colored Jones polynomials of $K$. Then for every element $A(x,y;q)\in \mathcal I[N_K]$ we have
\be A(q^{-1}x,(-1)^{f+1}y;q)\, J_N(q) = 0\,,\ee
where $x$ acts as multiplication by $q^N$ and $y$ acts by shifting $N\mapsto N+1$.
\end{theorem}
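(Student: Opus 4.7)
The plan is to push the recursion relation $A(x,y;\zeta_{2r})N_K^r(\alpha)=0$ through the residue formula of Proposition \ref{prop:Jones}. Let $A(x,y;q)=\sum_{i,j}c_{ij}(q)x^iy^j\in\mathcal{I}[N_K]$. For all but finitely many $r\geq 2$, the identity $A(x,y;\zeta_{2r})N_K^r(\alpha)=0$ holds as an equation of meromorphic functions of $\alpha\in\mathbb{C}/2r\mathbb{Z}$. Unpacking the action \eqref{Ealpha}, this reads
\begin{equation}
\sum_{i,j}c_{ij}(\zeta_{2r})\,\zeta_{2r}^{i\alpha}\,N_K^r(\alpha+j)=0.
\end{equation}
Fix $N\in\mathbb{N}$ and take the residue of both sides at $\alpha=N-1$. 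Using $\mathrm{Res}_{\alpha=N-1}N_K^r(\alpha+j)=\mathrm{Res}_{\alpha=N+j-1}N_K^r(\alpha)$ and Proposition \ref{prop:Jones}, each term involving $j$ produces (up to a common nonzero prefactor) the value $(-1)^{(N+j)+f(N+j-1)}J_{N+j}(\zeta_{2r})$, provided $N+j\notin r\mathbb{Z}$.

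The next step is a sign bookkeeping: factor $(-1)^{(N+j)+f(N+j-1)}=(-1)^{N+f(N-1)}\cdot(-1)^{j(f+1)}$ and pull the $N$-dependent factor out of the sum. The remaining factor $(-1)^{j(f+1)}$ from the shifting and $\zeta_{2r}^{i(N-1)}=(\zeta_{2r}^{-1}\cdot\zeta_{2r}^N)^i$ from multiplication are exactly what one obtains by substituting $x\mapsto q^{-1}x$ and $y\mapsto(-1)^{f+1}y$ into $A$. Therefore the vanishing of the residue becomes
\begin{equation}
A(q^{-1}x,(-1)^{f+1}y;\zeta_{2r})\,J_N(\zeta_{2r})=0,
\end{equation}
where now $x$ acts as $\zeta_{2r}^N$ and $y$ shifts $N\mapsto N+1$. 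This holds for every $r\geq 2$ satisfying (i) $\mathrm{ev}_r$ is defined on $A$, (ii) $A(x,y;\zeta_{2r})N_K^r=0$, and (iii) none of the finitely many $j$-shifts take $N$ into $r\mathbb{Z}$. All but finitely many $r$ satisfy these conditions, for each fixed $N$.

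To conclude, observe that $P_N(q):=A(q^{-1}x,(-1)^{f+1}y;q)\,J_N(q)$ is, after clearing a universal denominator in $q$ coming from $A$, a Laurent polynomial in $q$ (since $J_N(q)$ and the finite shifts $J_{N+j}(q)$ are Laurent polynomials). We have shown it vanishes at $\zeta_{2r}$ for infinitely many $r$, so it vanishes identically, yielding the desired recursion. The main technical point is simply the sign bookkeeping in step two, which forces the specific substitution $(x,y)\mapsto(q^{-1}x,(-1)^{f+1}y)$; the rest is an application of Proposition \ref{prop:Jones} together with the observation that a nonzero Laurent polynomial in $q$ has finitely many roots.
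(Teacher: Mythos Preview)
Your proof is correct and follows essentially the same route as the paper: both arguments push the recursion for $N_K^r$ through Proposition~\ref{prop:Jones}, perform the same sign bookkeeping (yielding the substitution $(x,y)\mapsto(q^{-1}x,(-1)^{f+1}y)$), and then conclude that a rational function of $q$ vanishing at infinitely many roots of unity is identically zero. The only cosmetic difference is that you take residues at $\alpha=N-1$, whereas the paper multiplies $N_K^r(\alpha)$ by the pole factor $\zeta_{2r}^{r\alpha}-\zeta_{2r}^{-r\alpha}$ and then evaluates; these are equivalent ways of extracting the same information from the simple poles described in Corollary~\ref{cor:struc}.
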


\noindent\emph{Proof.} From Corollary \ref{cor:struc} we see that the poles in $N_K^r(\alpha)$ come entirely from the denominator $\rtu^{r\alpha}-\rtu^{-r\alpha}$ in the modified quantum dimensions. Then, noting that
\be \text{Res}_{\alpha=N-1}\frac{1}{\rtu^{r\alpha}-\rtu^{-r\alpha}} = \text{Res}_{\alpha=N-1} \frac{1}{2i\sin(\pi\alpha)} = \frac{(-1)^{N-1}}{2\pi i}\,, \ee
we may rewrite Proposition \ref{prop:Jones} to say that
\be \label{J-nonres} (\zeta_{2r}^\alpha-\zeta_{2r}^{-\alpha})N_K^r(\alpha) \big|_{\alpha=N-1} = C_r (-1)^{f(N-1)} J_N(\zeta_{2r})\,, \ee
where $C_r=2 i^{-r}\sin\frac{\pi}{r}$ is a constant that depends on $r$ but not on $N$.

Also note that with $y$ acting as a shift $\alpha\mapsto \alpha+1$ we have $y(\zeta_{2r}^\alpha-\zeta_{2r}^{-\alpha})= (\zeta_{2r}^\alpha-\zeta_{2r}^{-\alpha})(-y)$, and so
\be \label{AJsign} A(x,y;\zeta_{2r})N_K^r(\alpha) = 0 \quad\Leftrightarrow\quad A(x,-y;\zeta_{2r})(\zeta_{2r}^\alpha-\zeta_{2r}^{-\alpha})N_K^r(\alpha) =0\,. \ee
Similarly, with $y$ acting as a shift $N\mapsto N+1$ we have $y(-1)^N = (-1)^N(-y)$,  so
\be \label{AJsign2} A(x,y;\zeta_{2r}) (-1)^{f(N-1)}J_N(\zeta_{2r}) = 0  \quad\Leftrightarrow\quad  A(x,(-1)^{f}y;\zeta_{2r}) J_N(\zeta_{2r})=0\,.\ee
Let $A(x,y;q)$ be any element of the ideal $\mathcal I[N_K]$. For every value of $r$ such that $A(x,y;q)$ is nonsingular at $q=\zeta_{2r}$ we have $A(x,y;\zeta_{2r})N_K(\alpha)=0$; and then from \eqref{J-nonres}--\eqref{AJsign2} we obtain
\be A(q^{-1}x,(-1)^{f+1}y;\zeta_{2r})J_N(\zeta^{2r}) = 0\,.\label{AJr} \ee
(The extra shift $x\to q^{-1}x$ is made to ensure that $x$ acting as $q^\alpha$ on the ADO is compatible with $x$ acting as $q^{N}$ (rather than $q^{N-1}$) on the colored Jones.)
Now consider the functions
\be B_n(q) := A(q^{-1}x,(-1)^{f+1}y;q)J_N(q) \in \C(q)\,, \qquad n\in\mathbb N \ee
Due to \eqref{AJr}, each rational function $B_N(q)$ has zeroes at an infinite set of distinct points $q=\zeta_{2r}$.  (Note: there are at most finitely many poles in $B_N(q)$, and if they occur at roots of unity, the corresponding values of $r$ may be thrown out without affecting this argument.)
 Each function $B_N(q)$ must therefore be identically zero. \phantom{X}\hfill $\square$

We have shown that, up to an algebra automorphism that rescales $(x,y)\mapsto(q^{-1}x,(-1)^{f+1}y)$, the annihilation ideal $\mathcal I[N_K]$ of the ADO family is included in the annihilation ideal of the colored Jones function. If we further assume the AJ Conjecture of \cite{Gar-AJ} (with a physical origin in \cite{Gukov}), it follows that:

\begin{cor} \label{cor:AJ}
(Assuming the AJ Conjecture of \cite{Gar-AJ}.) Let $K$ be a knot with framing $f$ and let $A(x,y;q)$ be any element of the ADO ideal $\mathcal I[N_K]$ that admits evaluation at $q=1$. Then $A(m,(-1)^{f+1}\ell;1)$ is divisible by the A-polynomial $\mathbf A(m,\ell)$ of $K$.
\end{cor}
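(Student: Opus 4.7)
The plan is to combine Theorem \ref{thm:Jones} with the AJ Conjecture essentially verbatim, since the corollary is an immediate algebraic consequence of these two inputs once the $q \to 1$ specialization is handled with care. Given $A(x,y;q) \in \mathcal I[N_K]$ admitting evaluation at $q=1$, I would first define the rescaled operator $\tilde A(x,y;q) := A(q^{-1}x,(-1)^{f+1}y;q)$. By Theorem \ref{thm:Jones}, $\tilde A(x,y;q)$ annihilates the colored Jones function $J_N^K(q)$, so $\tilde A \in \mathbb E_1$ belongs to the annihilation ideal of the colored Jones.

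Next, I would invoke the AJ Conjecture, which asserts that the $q \to 1$ specialization of any such annihilator of $J_N^K$ is divisible (in the commutative Laurent polynomial ring $\mathbb C[m^\pm,\ell^\pm]$ obtained from $\mathbb E_1$ at $q=1$) by the classical A-polynomial $\mathbf A(m,\ell)$ of $K$. To apply this, I need $\tilde A$ to admit a well-defined evaluation at $q=1$: this follows from the hypothesis that $A(x,y;q)$ does, together with the observation that the automorphism $(x,y) \mapsto (q^{-1}x,(-1)^{f+1}y)$ of $\mathbb E_1$ introduces only a monomial rescaling by $q^{-1}$ in the $x$-variable, which cannot create a pole at $q=1$. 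Setting $q=1$ in $\tilde A$ then yields $\tilde A(m,\ell;1) = A(m,(-1)^{f+1}\ell;1)$, so the AJ Conjecture gives $\mathbf A(m,\ell) \mid A(m,(-1)^{f+1}\ell;1)$, as required.

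There is essentially no obstacle here beyond bookkeeping: the entire content is already in Theorem \ref{thm:Jones} (the hard work) and the conjectural hypothesis. The only point that requires a moment of care is verifying that the sign and rescaling from Theorem \ref{thm:Jones} survive the specialization $q=1$ in a form matching the stated conclusion, which they do trivially since the factor $(-1)^{f+1}$ is $q$-independent and the $q^{-1}$ in $q^{-1}x$ becomes $1$.
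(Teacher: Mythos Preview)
Your proposal is correct and matches the paper's approach: the paper states the corollary as an immediate consequence of Theorem~\ref{thm:Jones} together with the AJ Conjecture, without spelling out a proof. Your explicit verification that the rescaling $(x,y)\mapsto(q^{-1}x,(-1)^{f+1}y)$ specializes at $q=1$ to $(m,\ell)\mapsto(m,(-1)^{f+1}\ell)$ is the only bookkeeping needed, and you have handled it correctly.
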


\begin{remark}
Theorem 66 of the upcoming revised version of \cite{Willetts} proves the converse to our Theorem \ref{thm:Jones}: that the colored Jones annihilation ideal is included in the ADO annihilation ideal. Taken together, these results imply that the two annihilation ideals are isomorphic.
Our computations in Appendix \ref{app:comp} confirm this isomorphism.
\end{remark}

\appendix

\section{Proof of Proposition \ref{prop:red}}
\label{app:red}

We give here an elementary proof of Proposition \ref{prop:red}, on quantum Hamiltonian reduction. We use the same notation as in Section \ref{sec:red}. The result we are aiming for is: \medskip

\emph{
\noindent For $n\geq 2$, let $\mathcal I_n\subseteq \E_n$ be a left ideal, and let $\mathcal I_{n-1}^c = \big(\mathcal I\cap \widetilde \E_{n-1}\big)\big|_{x_n=c}\subseteq \E_{n-1}$ as in \eqref{def-Inc}. If $\E_n/\mathcal I_n$ is a $q$-holonomic $\E_n$-module then $\E_{n-1}/\mathcal I_{n-1}^c$ is a $q$-holonomic $\E_{n-1}$-module. 
} \bigskip

Without loss of generality, we may assume $c=1$. Otherwise we may use the automorphism of $\E_n$ given by
\be (x_i,y_i)\mapsto \begin{cases} (x_i,y_i) & i\leq n-1 \\ (cx_i,y_i) & i=n \end{cases} \ee
to intertwine the reduction at $x_n=c$ with reduction at $x_n=1$.

Let $v$ denote the generator of $\E_n/\mathcal I_n$, whose annihilation ideal is $\mathcal I_n$. Let us also denote by
\be  \E_{n-1}=k[x_1^\pm,y_1^\pm,...,x_{n-1}^\pm,y_{n-1}^\pm]/(y_ix_j-q^{\delta_{ij}}x_jy_i)\,,\qquad \E_1 =k[x_n^\pm,y_n^\pm]/(y_nx_n-qx_ny_n) \ee
the standard $q$-Weyl algebra in the first $n-1$ pairs of variables and the last pair, respectively; and let us introduce the ``delta-function'' module
\be M^\delta_1= \E_1/(x_n-1) = \E_1 \delta^{(1)}\,, \ee
with formal generator $\delta^{(1)}$ satisfying $(x_n-1)\delta^{(1)}=0$, and its extension to an $\E_n$-module
\be  M^\delta_n = \E_n/(y_1-1,...,y_{n-1}-1,x_n-1) = \E_n \delta^{(n)} \ee
with formal generator $\delta^{(n)}$ satisfying $(y_i-1)\delta^{(n)}=0$ for $i=1,...,n-1$ and $(x_n-1)\delta^{(n)}=0$. Both $M_1^\delta$ and $M_n^\delta$ are $q$-holonomic (for $\E_1$ and $\E_n$, respectively), as in Example \eqref{eg-delta}.

It is also useful to recall that the \emph{tensor product} of $\E_n$-modules $U\otimes W$ has underlying vector space $U\otimes_{k[x^\pm]} W$ and action $x_i^\pm(u\otimes w) := (x_i^\pm u)\otimes  w = u\otimes (x_i^\pm w)$, $y_i^\pm(u\otimes w) :=(y_i^\pm u)\otimes(y_i^\pm w)$. In contrast, the \emph{exterior product} of an $\E_{n-1}$-module $U$ and an $\E_1$-module $W$ is defined to have underlying vector space $U\otimes_kW$ and action $x_i^\pm(u\otimes w)=(x_i^\pm u)\otimes w$, $y_i^\pm(u\otimes w)=(y_i^\pm u)\otimes w$ for $i\leq n-1$ and $x_n^\pm(u\otimes w) = u\otimes(x_n^\pm w)$, $y_n^\pm(u\otimes w) = u\otimes(y_n^\pm w)$. A special case is the exterior product of the algebras themselves, $\E_{n-1}\boxtimes \E_1\simeq \E_n$.

Now let $\widetilde M$ denote the submodule of the tensor product $(\E_n/\mathcal I_n)\otimes M^\delta_n$ generated by $v\otimes \delta^{(n)}$,
\be \widetilde M = \E_n(v\otimes \delta^{(n)})\,. \ee
$\widetilde M$ is $q$-holonomic because $q$-holonomic modules are closed under taking tensor products and subs (Section \ref{sec:closure}, \cite[Cor. 2.1.6, Prop 2.4.1]{Sabbah}). We will show that

\begin{lemma} \label{lemma:box}
$\widetilde M$ decomposes as an exterior product of $\E_{n-1}$ and $\E_1$ modules
\be \label{M-decomp} \widetilde M \simeq (\E_{n-1}/\mathcal I_{n-1}^1)\boxtimes M_1^\delta\,, \ee
whose first factor is precisely the module $\E_{n-1}/\mathcal I_{n-1}^1$ in the statement of Prop. \ref{prop:red}.
\end{lemma}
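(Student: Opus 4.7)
The plan is to produce an $\E_n$-module map
\[
\Theta\colon N:=(\E_{n-1}/\mathcal I_{n-1}^1)\boxtimes M_1^\delta \;\longrightarrow\; \widetilde M
\]
determined by $\bar v\otimes\delta^{(1)}\mapsto \tilde v$ (where $\bar v$ denotes the class of $1$ in $\E_{n-1}/\mathcal I_{n-1}^1$), and then to prove $\Theta$ is an isomorphism. To see $\Theta$ is well-defined one checks that the annihilator of $\bar v\otimes\delta^{(1)}$ in $\E_n\simeq \E_{n-1}\boxtimes\E_1$, which is the left ideal generated by (a lift of) $\mathcal I_{n-1}^1$ together with $(x_n-1)$, annihilates $\tilde v$. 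The relation $(x_n-1)\tilde v=v\otimes(x_n-1)\delta^{(n)}=0$ is immediate, and for $\alpha\in\mathcal I_{n-1}^1$ one picks $\tilde\alpha\in\mathcal I_n\cap\widetilde\E_{n-1}$ with $\tilde\alpha|_{x_n=1}=\alpha$, writes $\alpha=\tilde\alpha-(x_n-1)\gamma$ with $\gamma\in\widetilde\E_{n-1}$, and computes $\alpha\tilde v=(\tilde\alpha v)\otimes\delta^{(n)}-(\gamma v)\otimes(x_n-1)\delta^{(n)}=0$ since $\tilde\alpha v=0$. Surjectivity of $\Theta$ is immediate since $\tilde v$ generates $\widetilde M$.

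For injectivity, both $N$ and $\widetilde M$ decompose as $k$-vector spaces into ``sectors'' --- the $x_n$-eigenspaces of eigenvalue $q^{-k}$, indexed by $k\in\mathbb Z$ --- and $\Theta$ preserves this grading. Multiplication by $y_n^{\pm 1}$ shifts sectors $\E_{n-1}$-equivariantly, so injectivity of $\Theta$ is equivalent to injectivity of its restriction to sector $0$, namely $\Theta|_{N_0}\colon \E_{n-1}/\mathcal I_{n-1}^1\to \widetilde M_0$, $\bar C\mapsto Cv\otimes\delta^{(n)}$. Using the tensor-product relation for $x_n$, the condition $Cv\otimes\delta^{(n)}=0$ is equivalent to $Cv\in(x_n-1)(\E_n/\mathcal I_n)$, which lifts to $C\in\E_{n-1}\cap\bigl((x_n-1)\E_n+\mathcal I_n\bigr)$. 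The whole Lemma thus reduces to the equality
\[
\mathcal I_{n-1}^1 \;=\; \E_{n-1}\cap\bigl((x_n-1)\E_n+\mathcal I_n\bigr),
\]
whose $\subseteq$ direction is already contained in the well-definedness argument above.

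To establish the reverse inclusion, given $C\in\E_{n-1}$ with $C=(x_n-1)p+q$ for $p\in\E_n$, $q\in\mathcal I_n$, I would expand $p=\sum_j p_jy_n^j$ and $q=\sum_j q_jy_n^j$ in the free left $\widetilde\E_{n-1}$-module decomposition $\E_n=\bigoplus_j\widetilde\E_{n-1}y_n^j$ and match $y_n^j$-coefficients. This yields $C=(x_n-1)p_0+q_0$ at $j=0$ and the key constraints $q_j=-(x_n-1)p_j$ for $j\neq 0$. The natural candidate $\tilde C:=q_0\in\widetilde\E_{n-1}$ satisfies $q_0|_{x_n=1}=C$, so the remaining step is to show $q_0\in\mathcal I_n$, which would exhibit a witness in $\mathcal I_n\cap\widetilde\E_{n-1}$ and place $C=q_0|_{x_n=1}$ in $\mathcal I_{n-1}^1$.

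This last step is the main obstacle I anticipate, because the $y_n^0$-projection $\pi_0\colon\E_n\to\widetilde\E_{n-1}$ does not carry $\mathcal I_n$ into itself. My plan is to exploit the freedom in the lift: $p$ is only determined up to addition of $s\in\E_n$ satisfying $(x_n-1)s\in\mathcal I_n$, and the aim is to modify $p$ so as to lie entirely in $\widetilde\E_{n-1}$, whereupon $q=C-(x_n-1)p$ automatically sits in $\mathcal I_n\cap\widetilde\E_{n-1}$ and serves as $\tilde C$. The key input is that $\mathcal I_n$ is closed under left multiplication by $y_n^{\pm k}$, which lets one translate the $y_n^{\neq 0}$-components of $p$ into $\mathcal I_n$ using the constraints $q_j=-(x_n-1)p_j$ together with suitable powers of $y_n$. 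Equivalently, this is the standard D-module identity $\widetilde M\simeq f_*f^!\widetilde M$ for the closed embedding $f\colon\{x_n=1\}\hookrightarrow\operatorname{Spec}k[x_1^\pm,\ldots,x_n^\pm]$ combined with the delta-module $M_1^\delta$ encoding the $y_n^{\mathbb Z}$-covering: Sabbah's inverse image functor from \cite[Sec.~2.3]{Sabbah} identifies $f^!\widetilde M\simeq\E_{n-1}/\mathcal I_{n-1}^1$, and the exterior product with $M_1^\delta$ then reconstructs $\widetilde M$ exactly as claimed.
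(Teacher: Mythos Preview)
Your overall strategy---construct $\Theta$, verify well-definedness and surjectivity, then reduce injectivity to the ideal equality $\mathcal I_{n-1}^1=\E_{n-1}\cap\bigl((x_n-1)\E_n+\mathcal I_n\bigr)$---is sound and is essentially the route the paper takes. The well-definedness argument and the inclusion $\subseteq$ are correct.

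The genuine gap is the hard inclusion $\supseteq$, and your proposed fix does not close it. The plan to modify $p$ so that it lands in $\widetilde\E_{n-1}$ is circular: the correction you need is $s=\sum_{j\neq 0}p_j y_n^j$, and the requirement $(x_n-1)s\in\mathcal I_n$ reads $(x_n-1)s=-\sum_{j\neq 0}q_j y_n^j=q_0-q$, so $(x_n-1)s\in\mathcal I_n$ is \emph{equivalent} to $q_0\in\mathcal I_n$, which is exactly what you were trying to prove. Closure of $\mathcal I_n$ under left multiplication by $y_n^{\pm k}$ gives you the elements $y_n^{-k}q\in\mathcal I_n$, but their $y_n^0$-components are $\sigma_{-k}(q_k)$ (with $\sigma_{-k}$ the automorphism $x_n\mapsto q^{-k}x_n$), not $q_0$; there is no evident linear combination that isolates $q_0$. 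The final appeal to ``the standard D-module identity $\widetilde M\simeq f_*f^!\widetilde M$'' is not a proof: you would need a $q$-Weyl analogue of Kashiwara's equivalence for the closed embedding $\{x_n=1\}$, and establishing that is essentially the content of the lemma itself.

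The paper sidesteps this obstacle by never descending to $(x_n-1)\E_n v$. It uses the $y_n$-grading to identify the degree-$0$ piece $\widetilde M^{(0)}=\widetilde\E_{n-1}(v\otimes\delta^{(n)})$ directly as the $\widetilde\E_{n-1}$-module tensor product $(\widetilde\E_{n-1}v)\otimes_{k[x^\pm]}(\widetilde\E_{n-1}\delta^{(n)})\simeq(\widetilde\E_{n-1}v)\big/\bigl((x_n-1)\widetilde\E_{n-1}v\bigr)$. Thus for $\gamma\in\E_{n-1}$, the vanishing $(\gamma v)\otimes\delta^{(n)}=0$ yields $\gamma v\in(x_n-1)\widetilde\E_{n-1}v$ \emph{with the witness already in $\widetilde\E_{n-1}$}: there is $\tilde\gamma\in\widetilde\E_{n-1}$ with $\gamma v=(x_n-1)\tilde\gamma v$, and then $\hat\gamma:=\gamma-(x_n-1)\tilde\gamma\in\mathcal I_n\cap\widetilde\E_{n-1}$ satisfies $\hat\gamma|_{x_n=1}=\gamma$. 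Your reformulation $Cv\in(x_n-1)(\E_n/\mathcal I_n)$ is correct but too coarse---it loses precisely the information that the lift can be chosen inside $\widetilde\E_{n-1}$, which is what makes the argument go through.
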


\noindent\emph{Proof of Prop. \ref{prop:red}}.

Assuming the Lemma, the most efficient way to prove the proposition is to consider the map
\be f:(\C^*)^{n-1} \to (\C^*)^{n}\,,\qquad f(x_1,...,x_{n-1}) = (x_1,...,x_{n-1},1) \ee
and to apply the associated inverse image functor $f^!$ to $\widetilde M$. Explicitly, the inverse image functor $ f^!:\E_n\text{-mod} \to \E_{n-1}\text{-mod}$ acts on an $\E_n$-module $U$ by tensoring it over $\E_n$ with the $(\E_{n-1},\E_n)$ bimodule
\be \mathcal E := k[x_1^\pm,...,x_n^\pm]/(x_n-1)\underset{k[x_1^\pm,...,x_n^\pm]}\otimes \E_n  \simeq  (x_n-1)\E_n\big\backslash \E_n\,.  \ee
Thus in general $f^! U := \mathcal E\otimes_{\E_n} U \simeq (x_n-1)U\big\backslash U$. In the case of the product $\widetilde M=(\E_{n-1}/\mathcal I_{n-1}^1)\boxtimes M_1^\delta$, the inverse image functor just removes the $M_1^\delta$ factor, giving
\be f^!\widetilde M = \E_{n-1}/\mathcal I_{n-1}^1  \,. \ee
Since inverse image (the zeroth cohomology of the derived inverse image of \cite[Prop. 2.3.2]{Sabbah}) preserves $q$-holonomic modules, $ \E_{n-1}/\mathcal I_{n-1}^1$ must be $q$-holonomic. \phantom{X}\hfill $\square$

\bigskip

An alternative proof that $\widetilde M=(\E_{n-1}/\mathcal I_{n-1}^1)\boxtimes M_1^\delta$ being $q$-holonomic implies that $\E_{n-1}/\mathcal I_{n-1}^1$ is $q$-holonomic comes from comparing to $\W_n$-modules. We include this for completeness.

Denote by $w$ the generator of $\E_{n-1}/\mathcal I_{n-1}^1$ (whose annihilation ideal is $\mathcal I_{n-1}$), and let $N_{n-1}=\W_{n-1} w = \W_{n-1}/(\mathcal I_{n-1}^1\cap \W_{n-1})$. The canonical good filtration on this module is given by
\be \mathcal F_i N_{n-1} = \{\beta w\,|\, \text{deg}_{x,y} \beta\leq i\}\,, \ee
where $\text{deg}_{x,y}$ denotes total degree in $x_1,...,x_{n-1}$ and $y_1,...,y_{n-1}$.
Let $d_i = \text{dim}_k \mathcal F_i N_{n-1}$. 

Similarly, let $\widetilde N = \W_n(w\boxtimes \delta^{(1)})$. By \cite[Prop. 3.4]{GL-survey}, $\widetilde{N}$ is a $q$-holonomic $\mathbb{W}_n$ module. The canonical good filtration on $\widetilde N$ is given by $\mathcal F_i \widetilde N = \{\beta w\,|\, \text{deg}_{x,y} \beta\leq i\}$, where $\text{deg}_{x,y}$ denotes total degree in $x_1,...,x_{n}$ and $y_1,...,y_{n}$. Let $\tilde d_i =  \text{dim}_k \mathcal F_i \widetilde N$. Due to the product structure
\be \widetilde N = \W_n(w\boxtimes \delta^{(1)}) = (\W_{n-1}w)\otimes_k (\W_1\delta^{(1)}) \simeq (\W_{n-1}w)\otimes_k \C[y_n] \ee
we find that $\mathcal F_i\widetilde N \simeq \bigoplus_{j=0}^i \mathcal F_{i-j} N_{n-1}\otimes y_n^j$; so $\tilde d_i = \sum_{j=0}^i d_j$, or equivalently $d_i = \tilde d_i-\tilde d_{i-1}$. Since $\widetilde N$ is $q$-holonomic, there is a polynomial $s(i)$ of degree $n$ such that $\tilde d_i=s(i)$ for all sufficiently large $i$. Therefore, $d_i = s(i)-s(i-1)$ is a polynomial of degree $n-1$ for all sufficiently large $i$, whence $N_{n-1}$ is also $q$-holonomic. Then by Prop. \ref{prop:EW}, $\E_{n-1}/\mathcal I_{n-1}^1 = \E_{n-1}\otimes_{\W_{n-1}} N_{n-1}$ is $q$-holonomic.

\bigskip
\noindent\emph{Proof of Lemma \ref{lemma:box}}.

We introduce a $\Z$-grading on $\E_n$ given by degree with respect to $y_n$, with graded components  $\E_n^{(i)} = \widetilde E_{n-1}y_n^i$, where $\widetilde \E_{n-1}=k[x_1^\pm,y_1^\pm,...,x_{n-1}^\pm,y_{n-1}^\pm,x_n^\pm]/(y_ix_j-q^{\delta_{ij}}x_jy_i)$ as in \eqref{def-Etilde}.
With respect to this grading, $M^\delta_n$ may be given the structure of a graded module. Indeed,
\be M^\delta_n \simeq k[x_1^\pm,...,x_{n-1}^\pm,y_n^\pm]\,,\ee
and we take the graded components to be $M^\delta_n{}^{(i)} =k[x_1^\pm,...,x_{n-1}^\pm]y_n^i$.
The tensor product $(\E_n/\mathcal I_n)\otimes M^\delta_n$ and its submodule $\widetilde M=\E_n(v\otimes \delta^{(n)})$ inherit the $\Z$-grading from $M^\delta_n$. Explicitly, the graded components are 
\be \widetilde M^{(i)} = \widetilde \E_{n-1}\big(y_n^iv\otimes y_n^i\delta^{(n)}\big)\,.\ee

It follows that the annihilation ideal $\text{Ann}_{\E_n}(v\otimes \delta^{(n)})$ must be generated by elements that are homogeneous in $y_n$. Combined with the fact that $y_n$ is invertible, we find that $\text{Ann}_{\E_n}(v\otimes \delta^{(n)})$ can be generated entirely in degree zero, \emph{i.e.} its generators can be chosen to be elements of $\widetilde \E_{n-1}$. Moreover, we have $x_n-1\in \text{Ann}_{\E_n}(v\otimes \delta^{(n)})$, since $(x_n-1)\cdot(v\otimes \delta^{(n)}) = v\otimes (x_n-1)\delta^{(n)}=0$. All together, the annihilation ideal takes the form
\be \text{Ann}_{\E_n}(v\otimes \delta^{(n)}) = \E_n(p_1,...,p_\ell,x_n-1) \simeq \E_n(p_1|_{x_n=1},...,p_\ell|_{x_n=1},x_n-1) \label{Ann-p} \ee
for some $p_1,...,p_\ell\in \widetilde E_{n-1}$. We have used the fact that $x_n$ is central in $\widetilde E_{n-1}$  to simply set $x_1=1$ in the $p_i$'s, as indicated. This establishes a product decomposition
\be \widetilde M \simeq \E_{n-1}/(p_1|_{x_n=1},...,p_\ell|_{x_n=1}) \boxtimes \E_1/(x_n-1) = \E_{n-1}/(p_1|_{x_n=1},...,p_\ell|_{x_n=1}) \boxtimes M_1^\delta\,.\ee

It remains to show that the ideal $\E_{n-1}(p_1|_{x_n=1},...,p_\ell|_{x_n=1})$ appearing on the LHS of this product is equivalent to $\mathcal I_{n-1}^1 = (\mathcal I_n\cap \widetilde E_{n-1})|_{x_n=1}$. The following observation is key: for any $\beta\in \widetilde E_{n-1}$, we can use the $q$-commutation relations to order variables in each monomial in $\beta$ such that $x$'s are placed to the left and $y$'s are placed to the right. Then, using $y_i(v\otimes \delta_{x_n,c}) = (y_iv)\otimes (y_i \delta_{x_n,c}) = (y_iv)\otimes \delta_{x_n,c}$ for $i< n$ and $x_i(v\otimes \delta_{x_n,c}) = (x_iv)\otimes \delta_{x_n,c}$ for all $i$, we find that $\beta \cdot (v\otimes \delta^{(n)}) = (\beta v)\otimes \delta^{(n)}$ for all $\beta \in \widetilde E_{n-1}$.
More so, using $(x_n-1)\delta^{(n)}=0$ we can extend this to
\be \label{bvd} \beta \cdot (v\otimes \delta^{(n)}) = (\beta v)\otimes \delta^{(n)}= (\beta|_{x_n=1} v)\otimes \delta^{(n)} =  \beta|_{x_n=1} \cdot (v\otimes \delta^{(n)})\,. \ee

Now, if $\beta\in \mathcal I_n\cap \widetilde \E_{n-1} = \text{Ann}_{\widetilde \E_{n-1}}(v)$ then $\beta v=0$, so \eqref{bvd} implies $\beta|_{x_n=1}\in \text{Ann}_{\E_n}(v\otimes \delta^{(n)})$. From the form of the annihilation ideal \eqref{Ann-p}, we therefore have $\beta|_{x_n-1}\in \E_{n-1}(p_1|_{x_n=1},...,p_\ell|_{x_n=1})$.

Conversely, suppose that $\gamma\in \E_{n-1}(p_1|_{x_n=1},...,p_\ell|_{x_n=1})$. Then $(\gamma v)\otimes\delta^{(n)}=0$. We now observe%
\footnote{Explicitly: $(\widetilde  \E_{n-1} v)\otimes \delta^{(n)}$ is a submodule of the tensor product of modules $(\widetilde  \E_{n-1} v)\otimes (\widetilde  \E_{n-1}\delta^{(n)})$, which by definition has underlying vector space $(\widetilde  \E_{n-1} v) \underset{k[x_1^\pm,...,x_n^\pm]}\otimes (\widetilde  \E_{n-1}\delta^{(n)})$. But $\widetilde  \E_{n-1}\delta^{(n)}\simeq \C[x_1,...,x_n]/(x_n-1)$. Thus, noting that $x_n-1$ is central in $\widetilde\E_{n-1}$, the full tensor product becomes  $(\widetilde  \E_{n-1} v) \underset{k[x_1^\pm,...,x_n^\pm]}\otimes (\widetilde  \E_{n-1}\delta^{(n)}) \simeq (\widetilde  \E_{n-1} v) \underset{k[x_1^\pm,...,x_n^\pm]}\otimes \C[x_1,...,x_n]/(x_n-1) \simeq \big(\widetilde  \E_{n-1} v\big)\big/\big((x_n-1) \widetilde \E_{n-1} v\big)$. Therefore, the map $(\widetilde  \E_{n-1} v)\to(\widetilde  \E_{n-1} v)\otimes \delta^{(n)}$ has kernel contained in $(x_n-1)\widetilde E_{n-1}v$; and it is easy to check that the kernel also contains $(x_n-1)\widetilde  \E_{n-1} v$.} %
that the map $\widetilde \E_{n-1} v\to (\widetilde \E_{n-1} v)\otimes \delta^{(n)}$ of left $\widetilde E_{n-1}$-modules has kernel $(x_n-1)\widetilde E_{n-1}v$. Therefore, $(\gamma v)\otimes\delta^{(n)}=0$ implies that there exists $\tilde \gamma\in \widetilde \E_{n-1}$ such that $\gamma v=(x_n-1)\tilde\gamma v$; or equivalently that there exists $\hat\gamma\in \widetilde \E_{n-1}$ such that $\hat\gamma v=0$ and $\hat\gamma|_{x_n=1}=\gamma$ (just set $\hat\gamma = \gamma-(x_n-1)\tilde \gamma$). Since $\hat\gamma \in \text{Ann}_{\widetilde \E_{n-1}}(v) =  \mathcal I_n\cap \widetilde \E_{n-1}$, it follows that $\gamma \in (\mathcal I_n\cap \widetilde \E_{n-1})\big|_{x_n=1} = \mathcal I_{n-1}^1$.
\phantom{x} \hfill $\square$ \bigskip

\section{Further examples and computations}
\label{app:comp}

The Jones polynomials for the zero-framed trefoil ($\mathbf{3_1}$) and $\mathbf{5_2}$ knots are readily computed using a general formula for $p$-twist knots \cite{Masbaum, Habiro-simple} (see also \cite{GS-twist}):
\be J_n^p(q) := \sum_{k=0}^n \sum_{j=0}^k (-1)^{j+1} q^{k+pj(j+1)+\frac12j(j-1)} \frac{(q^{2j+1}-1)(q;q)_k(q^{1-n};q)_k(q^{1+n};q)_k}{(q;q)_{k+j+1}(q;q)_{k-j}}\\,. \ee
In our normalizations and choices of chirality, we have
\be J_N^{\mathbf{3_1}}(q) = \frac{q^N-q^{-N}}{q-q^{-1}} J_N^{p=1}(q^{-2})\,,\qquad J_N^{\mathbf{5_2}}(q) = \frac{q^N-q^{-N}}{q-q^{-1}} J_N^{p=2}(q^{-2})\,. \ee

We computed ADO invariants directly, using the $(1,1)$-tangle diagrams in Figure \ref{fig:diags}, and then changing the framing from blackboard to zero framing. We performed computations for $2\leq r\leq 11$. For convenience, we introduce the normalization
\be \hat N_K^r(\alpha) := i^{1-r}(\zeta_{2r}^\alpha-\zeta_{2r}^{-\alpha})N_K^r(\alpha-1)\,. \ee

 \begin{figure}[htb]
\centering
$$\raisebox{.5in}{\includegraphics[width=.9in]{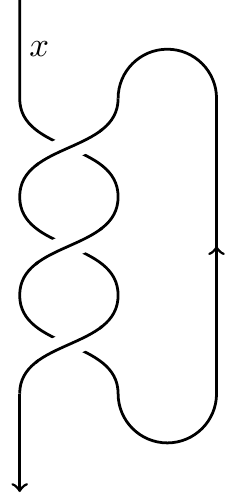}}\hspace{1in} \includegraphics[width=1.3in]{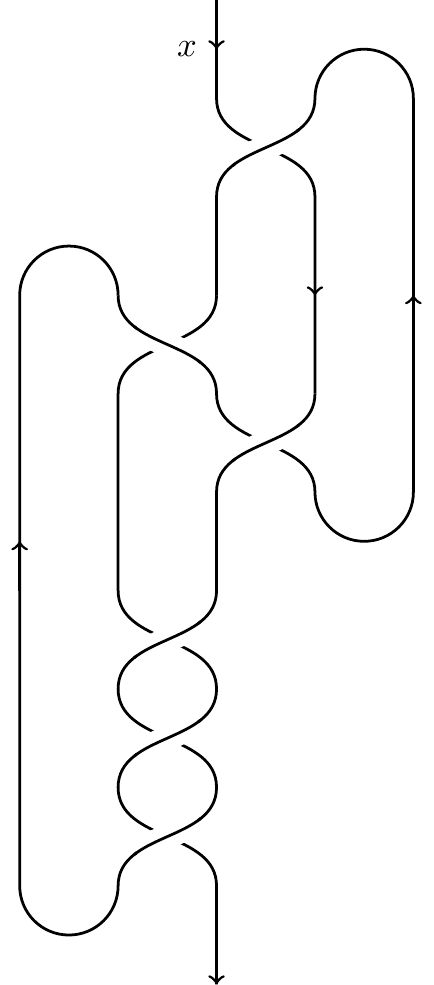} $$
\caption{Tangle diagrams whose closures are the trefoil (left) and $\mathbf{5_2}$ knot (right).}
\label{fig:diags}
\end{figure}

\noindent Letting $X^{(n)}:=x^n-x^{-n}$, the ADO invariants for the trefoil and $\mathbf{5_2}$ knots are:
$${\small \begin{array}{c|l}
 r & \hat N_{\mathbf{3_1}}^r(\alpha) \\\hline
 2 & -X^{(3)}  \\
 3 & q^2X^{(5)} + qX^{(1)} \\
 4 & q^2 X^{(7)}+X^{(3)}+q^2 X^{(1)} \\
 5 & q^2X^{(9)} -q^4 X^{(5)}+qX^{(3)} \\
 6 & q^2 X^{(11)} -q^4 X^{(7)}+X^{(5)}+X^{(1)} \\
 7 & q^2 X^{(13)} -q^4 X^{(9)}-q^6 X^{(7)} -q^5 X^{(3)}+q^2 X^{(1)} \\
 8 & q^2 X^{(15)} -q^4 X^{(11)}-q^6 X^{(9)} - q^4 X^{(5)} + X^{(3)}\\
  9 & q^2 X^{(17)}-q^4 X^{(13)}-q^6 X^{(11)}-q^3 X^{(7)}-q^7 X^{(5)} -q^8 X^{(1)} \\
 10 & q^2 X^{(19)}-q^4 X^{(15)}-q^6 X^{(13)}-q^2 X^{(9)}-q^6 X^{(7)} - q^6 X^{(3)}+q^2 X^{(1)} \\
 11 & q^2 X^{(21)}-q^4 X^{(17)} -q^6 X^{(15)}-qX^{(11)}-q^5 X^{(9)} - q^4 X^{(5)}-q^{10}X^{(3)}
 \end{array} } \quad (q=\zeta_{2r},\,x=\zeta_{2r}^\alpha\;\text{for each $r$}) $$
 
$$ {\small \hspace{-.4in} \begin{array}{c|l}
r &  \hat N_{\mathbf{5_2}}^r(\alpha) \\[.1cm]\hline
2 & -2X^{(3)}-X^{(1)} \\[.1cm]\hline
3 & (2q^2-1)X^{(5)}+2q^2 X^{(3)}+2q^2 X^{(1)}\\[.1cm]\hline
4 & (2q^2-2)X^{(7)}+(3q^2-q)X^{(5)}+(3q^2-1)X^{(3)}+(2q^2-1)X^{(1)} \\[.1cm]\hline
5 & (2q^2-q-2)X^{(9)}+(2q^3+2q^2-2)X^{(7)}+(2q^3+2q^2+q-3)X^{(5)}+(2q^3+q^2+q-2)X^{(3)} \\& +(q^3+q^2-2)X^{(1)} \\[.1cm]\hline
6 & -(4q^4+2)X^{(9)} -(6q^4+2)X^{(7)}-(6q^4+1)X^{(5)}-(4q^4+2)X^{(3)}-2X^{(1)} \\[.1cm]\hline
7 & -(q^4+2q^3-2q^2+1)X^{(13)} +(4q^5-2q^4-4)X^{(11)} +(5q^5-2q^4+2q^3-7)X^{(9)} \\ & +(6q^5-q^4+3q^3-2q^2+2q-7)X^{(7)}+(5q^5-2q^4+3q^3-q^2+q-7)X^{(5)} \\& +(3q^5-2q^4+q^3-q^2-4)X^{(3)} -(q^4+q^3-q^2+2)X^{(1)} \\[.1cm]\hline
8 & -(2q^6+2q^4-2q^2+2)X^{(15)}+(q^6-3q^4-q^2-5)X^{(13)}+(3q^6-q^4-3q^2-9)X^{(11)}+(7q^6-3q^2-10)X^{(9)} \\&+(7q^6-3q^2-10)X^{(7)}+(4q^6-q^4-3q^2-8)X^{(5)}+(q^6-3q^4-2q^2-4)X^{(3)} -(q^6+2q^4-q^2+1)X^{(1)} \\[.1cm]\hline
9 & -(4q^5-4q^2+1)X^{(17)} - (4q^5+4q^3-2q^1+4q)X^{(15)} - (2q^5+q^4+5q^3+7q+5)X^{(13)} \\& + (q^5+2q^4-3q^3-6q^2-11q-8)X^{(11)} +(3q^5-q^3-7q^2-10q-12)X^{(9)} \\& +(2q^5+q^4-2q^3-7q^2-10q-8)X^{(7)} -(q^5+2q^4+4q^3+2q^2+6q+5)X^{(5)} \\& - (3q^5+2q^4+3q^3-q^2+2q)X^{(3)} - (3q^5+q^4-3q^2+1)X^{(1)} \\[.1cm]\hline
10 & -(q^6-4q^2)X^{(19)} -(6q^6+4q^4-2)X^{(17)}-(8q^6+10q^4+4q^2+4)X^{(15)} -(6q^6+14q^4+10q^2+14)X^{(13)} \\& - (22q^4+8q^2+22)X^{(11)} + (q^6-22q^4-8q^2-22)X^{(9)}  - (6q^6+14q^4+10q^2+14)X^{(7)} \\& - (9q^6+8q^4+6q^2+3)X^{(5)} -(7q^6+2q^4+q^2-2)X^{(3)} - (3q^6-q^4-2q^2-1)X^{(1)} \\[.1cm]\hline
11 &-(2q^7-q^5-2q^4-2q^2-2q+2)X^{(21)} -(2q^9+6q^7+4q^5-4q^4+4q^3-6q^2-2)X^{(19)} \\& - (3q^9+5q^8+7q^7+3q^6+7q^5+7q^3-4q^2+1)X^{(17)} \\& -(2q^9+5q^8+10q^7+6q^6+9q^5+5q^4+12q^3+2q^2+5q+3)X^{(15)}  \\&
+(3q^9-9q^8-6q^7-11q^6-8q^5-14q^4-10q^3-9q^2-3q-12)X^{(13)} \\& +(6q^9-8q^8-4q^7-15q^6-6q^5-18 q^4-9q^3-15q^2-2q-13)X^{(11)} \\& + (3q^9-9q^8-5q^7-12q^6-7q^5-15q^4-9q^3-10q^2-3q-12)X^{(9)} \\&  -(2q^9+5q^8+9q^7+7q^6+8q^5+7q^4+10q^3+3q^2+4q+3)X^{(7)} \\& - (4q^9+4q^8+7q^7+3q^6+6q^5+2q^4+6q^3-3q^2)X^{(5)} \\& -(2q^9+5q^7+3q^5-2q^4+3q^3-5q^2-3)X^{(3)} -(2q^7-q^5-q^4-2q^2-2q+1)X^{(1)}
\end{array} }$$

Inhomogeneous recursion relations for the colored Jones polynomials of $\mathbf{3_1}$ and $\mathbf{5_2}$ were found in \cite{GL-J,GS-twist}; in the current normalization, the recursions take the form
\be \label{app-inhom} (q-q^{-1})A_{\mathbf{3_1}}(x,y;q)J_N^{\mathbf{3_1}}(q) =B_{\mathbf{3_1}}(q^N;q)\,,\qquad (q-q^{-1})A_{\mathbf{5_2}}(x,y;q)J_N^{\mathbf{5_2}}(q) = B_{\mathbf{5_2}}(q^N;q)\,, \ee
with
\begin{align}
 A_{\mathbf{3_1}}( x, y;q) &= q^3 x^6 y - 1\,, \\[.1cm]
 B_{\mathbf{3_1}}( x;q) &= q^2 x(q^2x^4-1)\,,\\[.1cm]
  A_{\mathbf{5_2}}( x, y;q) &= -q^{28}(1-q^2 x^4)(1-q^4 x^4) x^{14} y^3 \notag\\&\hspace{-45pt}
 -q^{5}(1-q^2 x^4)(1-q^8 x^4) x^4(1-q^4 x^2-q^4(1-q^2)(1-q^4) x^4+q^8(1+q^6) x^6+2q^{14} x^8-q^{18} x^{10}) y^2 \notag \\&\hspace{-45pt}
 +(1-q^4 x^4)(1-q^{10} x^4)(1-2q^2 x^2-q^2(1+q^6) x^4+q^4(1-q^2)(1-q^4) x^6+q^{10} x^8-q^{12} x^{10}) y \notag \\
  & \quad  -q(1-q^8 x^4)(1-q^{10} x^4)\,, \\
  B_{\mathbf{5_2}}(x;q) &=  q^5 x^3+q^7(1+q^2)x^5-q^7(1+q^8)x^7+\frac{q^6-q^{-6}}{q-q^{-1}}(-q^{14}x^9+q^{20}x^{13})\notag \\
  &\hspace{2in} -q^{19}(1+q^8)x^{15}-q^{25}(1+q^2)x^{17}-q^{29}x^{19}\,.
\end{align}
These imply homogeneous recursions
\be \label{app-hom} \widetilde A_K(x,y;q)J_N^K(q) :=    \big[B_K(x;q)y - B_K(qx;q)\big]A_K(x,y;q) J_N^K(q) = 0 \qquad (K=\mathbf{3_1},\,\mathbf{5_2})\,, \ee
just as in the figure-eight example \eqref{41-hom} in the Introduction.

We checked explicitly for each $2\leq r\leq 11$ that the ADO invariants satisfy inhomogeneous recursions
\be \label{31-inh} \begin{array}{l} 
 A_{\mathbf{3_1}}(x,y;\zeta_{2r})\hat N_{\mathbf{3_1}}^r(\alpha) = (\zeta_{2r}^{2r\alpha}-1+\zeta_{2r}^{-2r\alpha})B_{\mathbf{3_1}}(\zeta_{2r}^\alpha,\zeta_{2r}) \\[.2cm]
 A_{\mathbf{5_2}}(x,y;\zeta_{2r})\hat N_{\mathbf{5_2}}^r(\alpha) = (2\zeta_{2r}^{2r\alpha}-3+2\zeta_{2r}^{-2r\alpha})B_{\mathbf{5_2}}(\zeta_{2r}^\alpha,\zeta_{2r})  \end{array}
\ee
with exactly the same $A$ and $B$ polynomials. Again, these imply homogeneous recursions
\be \label{31-hom}  \widetilde A_K(x,y;\zeta_{2r}) \hat N_K^r(\alpha) = 0  \qquad r\in \N_{\geq 2}  \qquad (K=\mathbf{3_1},\,\mathbf{5_2})\,;\ee
with the same $\widetilde A_K(x,y;q) =   \big[B_K(x;q)y - B_K(qx;q)\big]A_K(x,y;q)$.
Note that the prefactors $(x^{2r}-1+x^{-2r})$ and $(2x^{2r}-3+2x^{-2r})$ appearing in \eqref{31-inh} may be factored out from the homogeneous recursion \eqref{31-hom}, since they commute with $y$ and just behave like overall constants.

In terms of the standard normalization of the ADO invariant used in the main body of the paper ($N_K$ rather than $\hat N_K$), the homogeneous recursions take the form
\be \widetilde A_K(qx,-y;\zeta_{2r}) N_K^r(\alpha) = 0  \qquad r\in \N_{\geq 2}  \qquad (K=\mathbf{3_1},\,\mathbf{5_2})\,, \ee
in perfect agreement with Theorem \ref{thm:Jones}.

\bibliography{ADO-invariant-recursion}
	
\end{document}